\documentclass[a4paper, 10pt, parskip=half]{scrartcl}

\usepackage[utf8]{inputenc}
\usepackage[T1]{fontenc}
\usepackage{lmodern}
\usepackage{csquotes}
\usepackage{amsmath}
\usepackage{amssymb}
\usepackage{amsthm}
\usepackage{amsfonts}
\usepackage{dsfont}
\usepackage{mathtools}
\usepackage{marginnote}
\usepackage[dvipsnames]{xcolor}
\usepackage{hyperref}
\hypersetup{%
  colorlinks=true,
  linkcolor=black,
  citecolor=black,
  urlcolor=blue,
  pdftitle={Chemotaxis(-fluid) systems with logarithmic sensitivity and slow consumption: global generalized solutions and eventual smoothness},
  pdfauthor={Mario Fuest},
  pdfkeywords={},
  bookmarksopen=true,
}

\usepackage{xcolor}

\usepackage[numbers, sort&compress]{natbib}
\bibliographystyle{my_abbrvurl}

\newcommand{\R}{\mathbb{R}}

\newcommand{\N}{\mathbb{N}}

\newcommand{\mc}[1]{\mathcal{#1}}

\newcommand{\ur}[1]{\mathrm{#1}}
\newcommand{\ure}{\ur{e}}

\ifdefined\labelenumi%
  \renewcommand{\labelenumi}{(\roman{enumi})}
  
\fi

\newcommand{\eps}{\varepsilon}

\DeclareMathOperator{\supp}{supp}

\newcommand{\defs}{\coloneqq}
\newcommand{\sfed}{\eqqcolon}

\newcommand{\ra}{\rightarrow}

\newcommand{\sea}{\searrow}

\newcommand{\rh}{\rightharpoonup}

\newcommand{\ol}{\overline}

% Integral shortcut

\newcommand{\dx}{\,\mathrm{d}x}
\newcommand{\ds}{\,\mathrm{d}s}

\newcommand{\dsigma}{\,\mathrm{d}\sigma}

\newcommand{\ddt}{\frac{\mathrm{d}}{\mathrm{d}t}}

\newcommand{\embed}{\hookrightarrow}

\newcommand{\hp}{\hphantom}
\newcommand{\pe}{\mathrel{\hp{=}}}

\newcommand{\intom}{\int_\Omega}

\newcommand{\intntom}{\int_0^T \int_\Omega}

\newcommand{\intninfom}{\int_0^\infty \int_\Omega}

\newcommand{\Ombar}{\ol \Omega}

\newcommand{\Ombarinf}{\Ombar \times [0, \infty)}

\newcommand{\loc}{\mathrm{loc}}

\newcommand{\leb}[2][\Omega]{\ensuremath{L^{#2}(#1)}}
\newcommand{\lebl}[1][\Omega]{\ensuremath{L\log L(#1)}}
\newcommand{\sob}[3][\Omega]{\ensuremath{W^{#2, #3}(#1)}}

\newcommand{\con}[2][\Ombar]{\ensuremath{C^{#2}(#1)}}

\newcommand{\dual}[1]{\ensuremath{(#1)^\star}}

\newcommand{\neps}{n_\eps}
\newcommand{\net}{n_{\eps t}}
\newcommand{\nne}{n_{0 \eps}}
\newcommand{\ce}{c_\eps}
\newcommand{\cet}{c_{\eps t}}
\newcommand{\cne}{c_{0 \eps}}
\newcommand{\we}{w_\eps}
\newcommand{\wet}{w_{\eps t}}
\newcommand{\wne}{w_{0 \eps}}
\newcommand{\ue}{u_\eps}
\newcommand{\uet}{u_{\eps t}}
\newcommand{\une}{u_{0 \eps}}

\newcommand{\cej}{c_{\eps_j}}

\newcommand{\infc}{\delta}
\newcommand{\tops}{\texorpdfstring}

\makeatletter
\renewenvironment{proof}[1][\proofname]{\par
  \pushQED{\qed}%
  \normalfont \topsep0\p@\relax
  \trivlist
  \item[\hskip\labelsep\scshape
  #1\@addpunct{.}]\ignorespaces
}{%
  \popQED\endtrivlist\@endpefalse
}
\makeatother

% For numbering only
\newtheorem{base}{Base}[section]
\numberwithin{equation}{section}

\newtheorem{theorem}[base]{Theorem} \newtheorem*{theorem*}{Theorem}
\newtheorem{lemma}[base]{Lemma} \newtheorem*{lemma*}{Lemma}
 \newtheorem*{prop*}{Proposition}
 \newtheorem*{cor*}{Corollary}

\theoremstyle{definition}
\newtheorem{remark}[base]{Remark} \newtheorem*{remark*}{Remark}
\newtheorem{definition}[base]{Definition} \newtheorem*{definition*}{Definition}
 \newtheorem*{example*}{Example}
 \newtheorem*{cond*}{Condition}

\textwidth170mm
\textheight210mm
\oddsidemargin-5mm
\evensidemargin-5mm

\newif\ifhyperconst             
%\hyperconsttrue % turn hyperlinking of global/local consts off e.g. for arXiv

\makeatletter
\newcounter{globalconst}%[section]
\newcommand{\newgc}[2][]{%
\refstepcounter{globalconst}%
\ltx@label{gc:#2}%
\ifhyperconst
  \hyperref[gc:#2]{C}_{\ref{gc:#2}#1}
\else
  C_{\begin{NoHyper}\ref{gc:#2}\end{NoHyper}}
\fi}
\ifhyperconst
  \newcommand{\gc}[2][]{\hyperref[gc:#2]{C}_{\ref{gc:#2}#1}}
\else
  \newcommand{\gc}[2][]{C_{\begin{NoHyper}\ref{gc:#2}\end{NoHyper}}}
\fi

\newcounter{localconst}[base]
\newcommand{\newlc}[2][]{%
\refstepcounter{localconst}%
\ltx@label{lc:\thesection:\arabic{base}:#2}%
\ifhyperconst
  \hyperref[lc:\thesection:\arabic{base}:#2]{c}_{\ref{lc:\thesection:\arabic{base}:#2}#1}
\else
  c_{\begin{NoHyper}\ref{lc:\thesection:\arabic{base}:#2}\end{NoHyper}}
\fi}
\ifhyperconst
  \newcommand{\lc}[2][]{\hyperref[lc:\thesection:\arabic{base}:#2]{c}_{\ref{lc:\thesection:\arabic{base}:#2}#1}}
\else
  \newcommand{\lc}[2][]{c_{\begin{NoHyper}\ref{lc:\thesection:\arabic{base}:#2}\end{NoHyper}}}
\fi
\makeatother

\setkomafont{title}{\normalfont\Large}
\title{Chemotaxis(-fluid) systems with logarithmic sensitivity and slow consumption: global generalized solutions and eventual smoothness}
\usepackage{authblk}

\author{Mario Fuest\footnote{e-mail: fuest@ifam.uni-hannover.de, corresponding author}}
\affil{Leibniz Universität Hannover, \\ Institut für Angewandte Mathematik, \\ Welfengarten 1, 30167 Hannover, Germany}
\date{}

\begin{document}
\maketitle

\KOMAoptions{abstract=true}
\begin{abstract}
\noindent
We consider the system
\begin{align*}
  \begin{cases}
    n_t + u \cdot \nabla n = \Delta n - \chi \nabla \cdot (\frac{n}{c} \nabla c), \\
    c_t + u \cdot \nabla c = \Delta c - nf(c), \\
    u_t + (u \cdot \nabla) u = \Delta u + \nabla P + n \nabla \phi, \quad \nabla \cdot u = 0,
  \end{cases}
\end{align*}
in smooth bounded domains $\Omega \subset \mathbb R^N$, $N \in \mathbb N$, for given $f \ge 0$, $\phi$ and complemented with initial and homogeneous Neumann--Neumann--Dirichlet boundary conditions,
which models aerobic bacteria in a fluid drop.
We assume $f(0) = 0$ and $f'(0) = 0$, that is, that $f$ decays slower than linearly near $0$,
and construct global generalized solutions provided that either $N=2$ or $N > 2$ and no fluid is present.\\[0.5pt]
If additionally $N=2$,
we next prove that this solution eventually becomes smooth and stabilizes in the large-time limit.
We emphasize that these results require smallness neither of $\chi$ nor of the initial data.\\[0.5pt]
 \textbf{Key words:} {chemotaxis, fluid, logarithmic sensitivity, generalized solutions, eventual smoothness} \\
 \textbf{AMS Classification (2020):} {35K55 (primary); 35B40, 35B65, 35D99, 35Q35, 92C17 (secondary)}
% 35B40 Asymptotic behavior of solutions to PDEs
% 35B65 Smoothness and regularity of solutions to PDEs
% 35K55 Nonlinear parabolic equations
% 35D99 Generalized solutions to partial differential equations; None of the above, but in this section
% 35Q35 PDEs in connection with fluid mechanics
% 92C17 Cell movement (chemotaxis, etc.)
\end{abstract}
\section{Introduction}
This article is concerned with the chemotaxis--Navier--Stokes system with logarithmic sensitivity 
\begin{align}\label{prob:fluid_incl}
  \begin{cases}
    n_t + u \cdot \nabla n = \Delta n - \chi \nabla \cdot (\frac{n}{c} \nabla c)             & \text{in $\Omega \times (0, \infty)$}, \\
    c_t + u \cdot \nabla c = \Delta c - nf(c)                                                & \text{in $\Omega \times (0, \infty)$}, \\
    u_t + (u \cdot \nabla) u = \Delta u + \nabla P + n \nabla \phi, \quad \nabla \cdot u = 0 & \text{in $\Omega \times (0, \infty)$}, \\
    \partial_\nu n = \partial_\nu c = 0, \, u= 0                                             & \text{on $\partial \Omega \times (0, \infty)$}, \\
    (n, c, u)(\cdot, 0) = (n_0, c_0, u_0)                                                    & \text{in $\Omega$},
  \end{cases}
\end{align}
variants of which have been proposed in \cite{TuvalEtAlBacterialSwimmingOxygen2005}
to model the behavior of aerobic bacteria interacting with a fluid by means of transportation and buoyancy.
Apart from random motion (term $\Delta n$) the bacteria with density $n$ may also partially orient their movement
towards higher concentrations of oxygen with density $c$ (term $-\chi \nabla \cdot (\frac{n}{c} \nabla c)$, where $\chi > 0$ is given), a mechanism called chemotaxis.
The oxygen diffuses (term $\Delta c$) and is consumed by the bacteria (term $-nf(c)$, where $f$ is a given nonnegative function)
and both the bacteria and the oxygen are transported by a fluid with velocity field $u$ (terms $+u \cdot \nabla n$ and $+u \cdot \nabla c$),
which in turn is given by the Navier--Stokes equation with an inhomogeneity accounting for buoyancy effects,
i.e.\ the observation that water with bacteria is heavier than without (term $+n \nabla \phi$, where $\phi$ is a given gravitational potential).  
The modelling in \cite{TuvalEtAlBacterialSwimmingOxygen2005} is motivated by experiments performed in \cite{DombrowskiEtAlSelfConcentrationLargeScaleCoherence2004};
for an alternative derivation of such systems based on a multiscale approach see \cite{BellomoEtAlMultiscaleDerivationNonlinear2016}.
% (TODO: check references.)

%Fluid-flow may suppress blow-up: \cite{KiselevXuSuppressionChemotacticExplosion2016}, \cite{BedrossianHeSuppressionBlowupPatlakKellerSegel2017}, \cite{HeTadmorSuppressingChemotacticBlowup2019}
%-- not so in bounded domains: \cite{WinklerCanFluidInteraction2020} \\
%Numerics for suppression of blow-up (``delay''?) \cite{LorzCoupledKellerSegelStokesModel2012}

The mathematical analysis of such models has first focussed on variants with linear taxis sensitivity, i.e.\ with $-\chi \nabla \cdot (\frac nc \nabla c)$ in the first equation replaced by $-\chi \nabla \cdot (n \nabla c)$,
for which global classical \cite{CaoLankeitGlobalClassicalSmalldata2016} and weak solutions \cite{DuanEtAlGlobalSolutionsCoupled2010,WinklerGlobalLargedataSolutions2012} have been constructed for small and large data, respectively.
Beyond that, many more results on chemotaxis--fluid systems (for instance also covering situations where the signal is produced rather than consumed) are available;
we refer to \cite[Section~4.6]{BellomoEtAlChemotaxisCrossdiffusionModels2022} for a recent overview.

The reason behind considering a logarithmic sensitivity in \eqref{prob:fluid_incl} is to account for the so-called Weber--Fechner law of stimulus perception,
an idea already present in earlier models by Keller and Segel \cite{KellerSegelTravelingBandsChemotactic1971}
(see also \cite{RosenSteadystateDistributionBacteria1978} and \cite[Section~2.2]{HillenPainterUserGuidePDE2009}).

From a mathematical point of view, the logarithmic sensitivity poses two main challenges:
First, it destroys the (quasi-) energy structure crucially made use of for instance in \cite{DuanEtAlGlobalSolutionsCoupled2010} and \cite{WinklerGlobalLargedataSolutions2012} inter alia for linear sensitivity functions
as $c f(c)$ cannot be concave near $c=0$ for nonnegative smooth $f$ with $f(0) = 0$ (cf.\ the discussion in \cite[p.~1638]{DuanEtAlGlobalSolutionsCoupled2010}).
Second, the consumption term in the second equation in \eqref{prob:fluid_incl} may force $c$ to become very small,
which means that the factor $\frac1c$ in the taxis term may become very large.
In fact, even local-in-time positive lower bounds for $c$ appear to be unavailable without relying on (generally not available) upper bounds for $n$---%
in contrast to logarithmic chemotaxis systems with signal production where even time-independent lower bounds have been derived, see for instance \cite[Lemma~2.2]{FujieSenbaGlobalExistenceBoundedness2015}.

Even without fluid interaction, global classical solutions of \eqref{prob:fluid_incl} have only been constructed under smallness conditions \cite{WangEtAlAsymptoticDynamicsSingular2016}
or for related, more regular systems
(in \cite{LankeitLocallyBoundedGlobal2017} with nonlinear diffusion enhancement,
in \cite{LiuGlobalClassicalSolution2018} with saturated taxis sensitivity,
and in \cite{LankeitViglialoroGlobalExistenceBoundedness2020} with weaker consumption terms, i.e.\ for $-n f(c)$ replaced by $-n^\beta c$ for some $\beta \in (0, 1)$).

\paragraph{Main result I: Global generalized solutions.}
The lack of unconditional global existence results for classical solutions is not entirely surprising.
Systems including cross-diffusive terms such as $-\chi \nabla \cdot (\frac{n}{c} \nabla c)$ in \eqref{prob:fluid_incl} generally have quite low regularity properties.
Indeed, the probably most prominent representative of such models, the classical Keller--Segel system
\begin{align*}%\label{prob:ks}
  \begin{cases}
    n_t = \Delta n - \nabla \cdot (n \nabla c), \\
    c_t = \Delta c - c + n
  \end{cases}
\end{align*}
admits classical solutions blowing up in finite time in both two- \cite{HerreroVelazquezBlowupMechanismChemotaxis1997, MizoguchiWinklerBlowupTwodimensionalParabolic}
and higher-dimensional \cite{WinklerFinitetimeBlowupHigherdimensional2013} settings%
---as do many variants of this system,
see for instance \cite{NagaiSenbaGlobalExistenceBlowup1998} for blow-up in a parabolic--elliptic system with logarithmic sensitivity
and the recent survey \cite{LankeitWinklerFacingLowRegularity2019} for further examples.
This makes it necessary to consider weaker solution concepts when studying global solvability of cross-diffusive systems
and while sometimes switching to usual weak formulations suffices, one often needs to consider even more general concepts;
see \cite[Section~1.2]{FuestStrongConvergenceWeighted2022} for a more thorough discussion of various notions of solvability.

Such generalized solutions have also been obtained for \eqref{prob:fluid_incl} with $f(c) = c$
both in the two-dimensional setting with fluid
(\cite{LiuLargetimeBehaviorTwodimensional2021};
see also the precedents \cite{WangGlobalLargedataGeneralized2016} for the chemotaxis--Stokes system, i.e.\ \eqref{prob:fluid_incl} without the nonlinear convection term $(u \cdot \nabla) u$ in the fluid equation,
and \cite{WinklerTwodimensionalKellerSegel2016} for the fluid-free case)
and higher dimensional fluid-free radially symmetric settings (\cite{WinklerRenormalizedRadialLargedata2018}).
%and \cite{LankeitLankeitGlobalGeneralizedSolvability2019, FuestStrongConvergenceWeighted2022} for systems with additional superlinear dampening terms in the first equation).

Our first main result expands on these findings and states that \eqref{prob:fluid_incl} possesses global generalized solutions if either $N=2$ or if $N \ge 3$ and there is no fluid present,
and if $f$ does not only fulfill
\begin{align}\label{eq:intro:cond_f}
  f \in C^1([0, \infty)) 
  \quad \text{with} \quad
  f(0) = 0
  \quad \text{and} \quad
  f > 0 \text{ in $(0, \infty)$}
\end{align}
but also
\begin{align}\label{eq:ev_smooth:cond_f}
  f'(0) = 0.
\end{align}
By \eqref{eq:intro:cond_f}, the condition \eqref{eq:ev_smooth:cond_f} is equivalent to $\lim_{s \sea 0} \frac{f(s)}{s} = 0$;
that is, \eqref{eq:ev_smooth:cond_f} requires superlinear decay of $f$ near $0$, which reflects that oxygen is consumed \emph{more slowly} if nearly none is left.
We remark that \eqref{eq:ev_smooth:cond_f} is not fulfilled for the most typical choice $f(c) = c$ but, for instance, for $f(c) = c^2$.

\begin{theorem}\label{th:global_ex}
  Let $\Omega \subset \R^N$, $N \ge 2$, be a smooth, bounded domain, $\chi > 0$ and $\phi \in \sob2\infty$,
  and assume \eqref{eq:intro:cond_f}, \eqref{eq:ev_smooth:cond_f}
  and
  \begin{align}\label{eq:intro:cond_init}
    \begin{cases}
      n_0 \in \leb1 \quad \text{with $n_0 > 0$ a.e.\ and $\ln n_0 \in \leb1$}, \\
      c_0 \in \leb\infty \quad \text{with $c_0 \ge \infc$ for some $\infc > 0$}, \\
      u_0 \in L^2(\Omega; \R^N) \quad \text{with $\nabla \cdot u_0 = 0$ in $\mc D'(\Omega)$}.
    \end{cases}
  \end{align}
  If
  \begin{align}\label{eq:intro:n=2_or_no_fluid}
    N = 2
    \quad \text{or} \quad
    (u_0 \equiv 0 \text{ and } \phi \equiv 0),
  \end{align}
  then there exists a global generalized solution $(n, c, u)$ of \eqref{prob:fluid_incl} in the sense of Definition~\ref{def:sol_concept} below
  which has the property that $u \equiv 0$ if $u_0 \equiv 0$ and $\phi \equiv 0$.
\end{theorem}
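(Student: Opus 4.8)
The plan is to construct the generalized solution as a limit of classical solutions to a suitably regularized problem, following the well-established approximation strategy for cross-diffusive systems. First I would replace the singular taxis term $-\chi\nabla\cdot(\frac nc\nabla c)$ by something like $-\chi\nabla\cdot(\frac{\neps}{\ce}\nabla\ce)$ with $\ce$ bounded below away from zero (e.g.\ solving the $c$-equation with an additional positivity-preserving structure, or dividing by $\ce+\eps$), regularize the coupling to the fluid via a Yosida-type approximation $u_\eps\cdot\nabla(\cdot)$ replaced by $(\id+\eps A)^{-1}u_\eps\cdot\nabla(\cdot)$ as in Winkler's chemotaxis--Navier--Stokes works, and mollify the initial data so that \eqref{eq:intro:cond_init} is approximated by smooth, strictly positive data. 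Standard parabolic theory then yields global classical solutions $(\neps,\ce,\ue)$ of the regularized system for each $\eps\in(0,1)$; I would expect this step to be routine given \eqref{eq:intro:cond_f} and the regularizations.

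The heart of the matter is deriving $\eps$-independent a priori estimates strong enough to pass to the limit. The natural quantities are: mass conservation $\int_\Omega\neps=\int_\Omega n_0$; the maximum principle bound $0<\ce\le\|c_0\|_{L^\infty}$ (using $f\ge0$); an energy-type inequality for $\int_\Omega\ue^2$ plus $\int_0^\infty\int_\Omega|\nabla\ue|^2$ from testing the fluid equation by $\ue$ together with Poincaré and the $L^1$-bound on $\neps$ (here $\phi\in\sob2\infty$ enters); and—most importantly for the chemotaxis part—an entropy estimate obtained by testing the $n$-equation with $\ln\neps$ (or $1+\ln\neps$), which yields control of $\int_\Omega\neps\ln\neps$ together with $\int_0^\infty\int_\Omega\frac{|\nabla\neps|^2}{\neps}$, at the cost of a term involving $\int_0^\infty\int_\Omega\frac{\nabla\neps\cdot\nabla\ce}{\ce}$. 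The latter must be absorbed using a companion functional built from $\ce$: testing the $c$-equation by $-\frac1{\ce}$ (equivalently, tracking $\int_\Omega|\nabla\ln\ce|^2$ or $-\int_\Omega\ln\ce$) produces a good term $\int_0^\infty\int_\Omega\frac{|\nabla\ce|^2}{\ce^2}\neps f(\ce)/\ce$ or similar, and here \eqref{eq:ev_smooth:cond_f}—i.e.\ $f(s)/s\to0$ as $s\sea0$—is exactly what is needed so that the bad cross term is dominated; without superlinear decay of $f$ this cancellation fails, which is the structural reason the hypothesis is imposed. Combining these via Young's inequality and a Gronwall argument should give, uniformly in $\eps$ and locally in time: $\neps$ bounded in $L^1\log L^1$, $\nabla\sqrt{\neps}$ (hence $\nabla\ln\neps$ against $\neps$) bounded in $L^2$, $\ce$ bounded in $L^\infty$ with $\nabla\ln\ce$ bounded in $L^2$, and $\ue$ bounded in the energy space. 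I expect this coupled entropy estimate, and in particular making the absorption uniform down to the degenerate set $\{\ce\approx0\}$, to be the main obstacle.

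With these bounds in hand, the remaining steps are comparatively standard. Equicontinuity in time of (nonlinear functions of) the solution components follows from the respective equations tested against smooth functions, so Aubin--Lions-type compactness and the Vitali convergence theorem yield a subsequence $\eps=\eps_j\sea0$ along which $\neps\to n$, $\ce\to c$, $\ue\to u$ in suitable strong and weak senses, with $n>0$ a.e., $c\ge0$, $\ln n\in L^1_{\loc}$, and $\nabla\ln n$, $\nabla\ln c$ making sense as the relevant $L^2_{\loc}$-limits. One then verifies that the limit satisfies the (super/sub)solution inequalities and the weak fluid equation that constitute Definition~\ref{def:sol_concept}: the $n$-equation is treated in the logarithmic/renormalized form $\partial_t\ln n\ge\Delta\ln n-|\nabla\ln n|^2-\chi\nabla\cdot(\frac{\nabla c}{c})+\chi\frac{\nabla c}{c}\cdot\nabla\ln n+u\cdot\nabla\ln n$ (passing to the limit in each term using the strong convergences, with lower semicontinuity of the convex gradient terms giving the inequality), the $c$-equation passes to the limit directly since $c\mapsto nf(c)$ is handled by dominated convergence using the $L^\infty$-bound on $\ce$ and strong convergence of $\neps$, and the fluid equation is linear enough that weak convergence suffices for every term except $(u\cdot\nabla)u$, which is controlled by the strong $L^2_{\loc}$-convergence of $\ue$. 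Finally, if $u_0\equiv0$ and $\phi\equiv0$ then $\ue\equiv0$ for every $\eps$ by uniqueness for the (then decoupled, homogeneous) Navier--Stokes problem, so $u\equiv0$, giving the last assertion; note also that in this fluid-free case the energy estimate for $u$ is vacuous, which is why the dimension restriction in \eqref{eq:intro:n=2_or_no_fluid} can be dropped there.
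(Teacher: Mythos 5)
Your overall skeleton (regularize, get classical approximate solutions, derive $\eps$-independent quasi-energy estimates, extract a limit by Aubin--Lions/Vitali, and verify the weak/renormalized formulations) matches the paper's strategy, and your identification of the basic a~priori quantities is essentially right. However, you misplace the role of hypothesis \eqref{eq:ev_smooth:cond_f}, and this hides the two genuinely nontrivial steps.

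First, you claim that $f'(0)=0$ is what makes the coupled entropy/quasi-energy estimate close. It is not: after the change of variable $w_\eps=-\ln(c_\eps/\|c_0\|_{\leb\infty})$, testing the $n$-equation by $-1/\neps$ and integrating the $w$-equation, the only ``bad'' term is $\chi^2\intom \neps\, c_\eps^{-1}f(c_\eps)$, and this is bounded by $\chi^2\|f'\|_{L^\infty((0,\|c_0\|_{\leb\infty}))}\intom n_0$ using nothing beyond \eqref{eq:intro:cond_f} and the mass and $L^\infty$ bounds. So the resulting local-in-time bounds on $\intntom|\nabla\neps|^2/\neps^2$ and $\intntom|\nabla w_\eps|^2$ come for free without \eqref{eq:ev_smooth:cond_f}; the degenerate set $\{c_\eps\approx 0\}$ causes no trouble at this stage.

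Second, and more seriously, you assert that the passage to the limit in the $c$-equation is by dominated convergence ``using the $L^\infty$-bound on $\ce$ and strong convergence of $\neps$.'' But strong $L^1_{\loc}$-convergence of $\neps$ is only available in $N=2$ (via Heihoff's inequality combined with the quasi-energy, giving uniform integrability of $\neps$). For $N\ge 3$ the quasi-energy alone does not give uniform integrability of $\neps$, and so of $\neps f(c_\eps)$; this is precisely the obstacle the paper overcomes with a separate functional $\intom(\ln\neps)c_\eps$, whose time evolution yields $\intntom \neps f(c_\eps)\,|\ln(\neps f(c_\eps))|\le C$ and hence, via de la Vall\'ee Poussin and Vitali, the needed $L^1_{\loc}$ convergence of $\neps f(c_\eps)$ --- still without \eqref{eq:ev_smooth:cond_f}.

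Where \eqref{eq:ev_smooth:cond_f} is genuinely required is the point you gloss over as ``lower semicontinuity of the convex gradient terms'': the supersolution inequality \eqref{eq:sol_concept:n_ln_supersol} contains the product term $-\chi\intninfom\frac{n\,\nabla n\cdot\nabla c}{(n+1)^2c}\varphi$, and passing to the limit there requires \emph{strong} $L^2_{\loc}$-convergence of $\nabla\ln c_\eps$, not merely weak. In $N\ge 3$, where the $L^1_{\loc}$-compactness of $\neps$ is unavailable, the paper obtains this by testing the $c$-equation with $-1/g(c_\eps)$ for an auxiliary $g\ge f$ with $g(s)=f(s)+\tfrac{\eta s}{2C}$ near zero; the superlinear decay $f(s)=o(s)$ forces $g'(0)>0$, which is what makes $\intntom\mathds 1_{\{c_\eps\le A\}}|\nabla c_\eps|^2/c_\eps^2$ arbitrarily small uniformly in $\eps$. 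Without this, strong convergence of $\nabla\ln c_\eps$ is not obtainable from the energy estimates alone, and the supersolution inequality cannot be verified. In short: your proposal misattributes the hypothesis and is missing the two key mechanisms (the functional $\intom(\ln\neps)c_\eps$ and the small-$c_\eps$ estimate of $\nabla\ln c_\eps$) that make the higher-dimensional case work.

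As a minor remark, the paper does not regularize the fluid coupling at all: in $N=2$ the Navier--Stokes subsystem already has global classical solutions, and for $N\ge 3$ the fluid is identically zero by hypothesis \eqref{eq:intro:n=2_or_no_fluid}, so the Yosida step you propose is superfluous here.
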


The main novelty of Theorem~\ref{th:global_ex} is the fluid-free higher dimensional case.
Indeed, for $\chi = 1$, more regular initial data and $f(c) = c$ the two-dimensional setting with fluid has already been treated in \cite{LiuLargetimeBehaviorTwodimensional2021}
and the challenges caused by large $\chi$, less regular initial data and more general $f$ are relatively limited.
(In fact, the main reason to include the two-dimensional existence result here is so that we can refer to it in Theorem~\ref{th:ev_smooth} below.)

On the other hand, new ideas are necessary for $N \ge 3$.
While quasi-energy functionals quickly give a~priori bounds for (weighted) gradients of all components of solutions to approximative systems (see Lemma~\ref{lm:quasi_energy} and Lemma~\ref{lm:fluid_energy}),
they are only strong enough to imply uniform integrability of $\neps f(\ce)$ in the two-dimensional setting (where such an estimate follows for instance from Heihoff's inequality \cite[(1.2)]{HeihoffTwoNewFunctional2022}).
However, such an a~priori estimate turns out to be crucial for undertaking the limit processes in the weak formulations for the first and second solution components;
see the discussion at the beginning of Subsection~\ref{sec:strong_conv} for a more thorough discussion of this point.
One way to obtain uniform integrability of $\neps$ (and hence also of $\neps f(\ce)$) is to modify the system by adding a superlinear degradation term to the first equation in \eqref{prob:fluid_incl}.
This observation stands as the core of \cite[Section~7]{FuestStrongConvergenceWeighted2022}, where recently global generalized solution for the corresponding problem have been constructed
(cf.\ also the precedent \cite{LankeitLankeitGlobalGeneralizedSolvability2019} for quadratic dampening terms).

However, as we aim to prove a global existence result for \eqref{prob:fluid_incl} without any dampening terms and also for $N \ge 3$,
these ideas alone are evidently insufficient for our purpose.
Fortunately, we can adapt an idea recently introduced in \cite{HombergEtAlExistenceGeneralizedSolutions2022}:
The crucial a~priori estimate, uniform boundedness of $\intntom \neps f(\ce) |\ln(\neps f(\ce))|$,
follows eventually from considering the time evolution of the functional $\intom (\ln \neps) \ce$, see Lemma~\ref{lm:nc_uniform_int} and the discussion directly preceding that lemma.

Additionally relying on \eqref{eq:ev_smooth:cond_f}, we are then also able to favourably bound $\nabla \ln \ce$ on sets where $\ce$ is small (cf.\ Lemma~\ref{lm:nabla_ln_c_eta}),
which allows us to obtain strong convergence of $\nabla \ln \ce$ in Lemma~\ref{lm:strong_conv2}.
Combined, these bounds and convergence properties then make it possible to pass to the limit in each term of the weak formulations for the approximate problems,
allowing us to prove Theorem~\ref{th:global_ex} in Subsection~\ref{sec:pf_th1}.

\paragraph{Main result II: Eventual smoothness and stabilization in the two-dimensional setting.}
A natural next step is to analyze the behavior of the solution given by Theorem~\ref{th:global_ex} for large times
with respect to both eventual regularization and convergence in the large-time limit.
These two points are actually related:
Both bounds in comparatively strong topologies and smallness of certain quantities may be key in favourably estimating worrisome terms in energy functionals,
which, depending on the functional, in turn may imply stronger a~priori estimates or convergence (see also the discussion after Theorem~\ref{th:ev_smooth} below).

Indeed, for certain relatives of \eqref{prob:fluid_incl} with scalar nonsingular taxis sensitivity,
both convergence to homogeneous steady states 
(see for instance \cite{WinklerStabilizationTwodimensionalChemotaxisNavier2014, JiangEtAlGlobalExistenceAsymptotic2015} for the two-dimensional
as well as \cite{ChaeEtAlGlobalExistenceTemporal2014, CaoLankeitGlobalClassicalSmalldata2016} for the small-data three-dimensional setting
and \cite{WinklerHowFarChemotaxisdriven2017} for results regarding weak solutions)
and eventual smoothness properties
(see e.g.\ \cite{TaoWinklerEventualSmoothnessStabilization2012} for the fluid-free three-dimensional case
and \cite{WangGlobalSolvabilityEventual2020} for a three-dimensional chemotaxis--Navier--Stokes system with superlinear degradation)
have been shown.
Moreover, for tensor-valued taxis sensitivities, global generalized solutions have been constructed in planar domains
(\cite{HeihoffGlobalMasspreservingSolutions2020}; see also \cite{WinklerLargedataGlobalGeneralized2015} for a precedent dealing with a Stokes fluid),
whose large-time and eventual regularity properties have been analysed in \cite{HeihoffTwoNewFunctional2022}.

Regarding chemotaxis-fluid systems with logarithmic sensitivity, the results appear to be limited to two-dimensional settings:
For Stokes fluids, \cite{WangGlobalLargedataGeneralized2016} shows convergence towards homogeneous steady states
while \cite{BlackEventualSmoothnessGeneralized2018} asserts eventual smoothness provided $\intom n_0$ is sufficiently small.
Moreover, under a similar smallness condition, both convergence and eventual smoothness are obtained in \cite{LiuLargetimeBehaviorTwodimensional2021} for the full Navier--Stokes equation.

Our second main result is then able to give an affirmative answer to the question whether similar relaxation properties are also exhibited by the global generalized solution (potentially with large mass)
constructed in Theorem~\ref{th:global_ex}.
\begin{theorem}\label{th:ev_smooth}
  Suppose in addition to the assumption of Theorem~\ref{th:global_ex} that $N=2$.
  Then the solution given by Theorem~\ref{th:global_ex} eventually becomes smooth in the sense that there are $t_\star > 0$ and $P \in C^{1, 0}(\Ombar \times [t_\star, \infty))$ such that
  \begin{align*}
    n, c \in C^{2, 1}(\Ombar \times [t_\star, \infty)), \quad 
    u \in C^{2, 1}(\Ombar \times [t_\star, \infty); \R^2)
  \end{align*}
  and that $(n, c, u, P)$ is a classical solution of \eqref{prob:fluid_incl} in $\Ombar \times [t_\star, \infty)$;
  that is, that the first four equations therein are fulfilled pointwise in $\Ombar \times [t_\star, \infty)$.

  Moreover, this solution stabilises in the large time limit. More precisely,
  \begin{align}\label{eq:ev_smooth:large_time}
    \lim_{t \to \infty} \left( \|n(\cdot, t) - \ol n_0\|_{\con2} + \|c(\cdot, t)\|_{\con2} + \|u(\cdot, t)\|_{\con2} \right) = 0,
  \end{align}
  where $\ol n_0 = \frac{1}{|\Omega|} \intom n_0$.
\end{theorem}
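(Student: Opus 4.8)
The plan is to obtain Theorem~\ref{th:ev_smooth} by combining three ingredients: first, extract from the energy machinery a single late time $t_\star$ at which $(n,c,u)(\cdot,t_\star)$ is regular and already close, in suitable norms, to the anticipated limit state $(\ol n_0,0,0)$; second, run forward from $t_\star$ a conditional classical solvability theory in which this smallness is shown to persist and in fact to decay to zero, the singular factor $\tfrac1c$ being tamed precisely by the combination of this smallness with the slow-consumption assumption $f'(0)=0$; third, identify the resulting classical solution with the generalized one and upgrade the decay to the topology of $\con2$.

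Concretely, I would first collect the a~priori information already present in the construction underlying Theorem~\ref{th:global_ex}: the mass identity $\intom n(\cdot,t)=m\defs\intom n_0$, the relation $\ddt\intom c(\cdot,t)=-\intom n f(c)\le 0$ together with $\intninfom n f(c)<\infty$ and the uniform bound $\|c(\cdot,t)\|_{\leb\infty}\le\|c_0\|_{\leb\infty}$, and the planar quasi-energy and fluid-energy inequalities of Lemma~\ref{lm:quasi_energy} and Lemma~\ref{lm:fluid_energy}, which yield finiteness of quantities such as $\intninfom\frac{|\nabla n|^2}{n}$, $\intninfom n|\nabla\ln c|^2$, $\intninfom\frac{|\nabla c|^2}{c^2}$ and $\intninfom|\nabla u|^2$. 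A Fatou-type argument then produces $t_k\to\infty$ along which $\intom\frac{|\nabla n(\cdot,t_k)|^2}{n(\cdot,t_k)}$, $\intom|\nabla\ln c(\cdot,t_k)|^2$, $\intom|\nabla u(\cdot,t_k)|^2$ and $\intom n(\cdot,t_k)f(c(\cdot,t_k))$ all tend to $0$; with the mass identity, the Poincaré inequality and $\nabla\cdot u_0=0$ this forces $n(\cdot,t_k)\to\ol n_0$ in $\leb2$ and $u(\cdot,t_k)\to 0$, while monotonicity of $\intom c$ combined with $\nabla c=c\,\nabla\ln c$ and the uniform $\leb\infty$-bound for $c$ gives smallness of $\|c(\cdot,t_k)\|_{\leb p}$ for every $p<\infty$ and, after a short additional smoothing step, of $\|c(\cdot,t_k)\|_{\leb\infty}$. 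Fixing $k$ large enough and a slightly later time, I set $t_\star$ to be this time.

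The core step is the conditional statement: there is $\eta>0$ such that whenever the data at $t_\star$ are smooth with $\intom|\nabla\ln c(\cdot,t_\star)|^2+\|c(\cdot,t_\star)\|_{\leb\infty}+\intom(n(\cdot,t_\star)-\ol n_0)^2+\intom|\nabla u(\cdot,t_\star)|^2<\eta$, the local classical solution emanating from $t_\star$ is global, these quantities remain below $2\eta$, and they tend to $0$ as $t\to\infty$. The engine is a coupled family of differential inequalities for $E_1(t)\defs\intom n\ln\tfrac{n}{\ol n_0}$, $E_2(t)\defs\intom|\nabla\ln c|^2$ and $E_3(t)\defs\intom|\nabla u|^2$, supplemented by ODE control of $\|c\|_{\leb\infty}$: testing the first equation with $\ln\tfrac{n}{\ol n_0}$ and using $\nabla\cdot u=0$ gives, up to harmless terms, $\ddt E_1\le-\tfrac12\intom\frac{|\nabla n|^2}{n}+\tfrac{\chi^2}{2}\intom n|\nabla\ln c|^2$; inserting $\partial_t\ln c=\Delta\ln c+|\nabla\ln c|^2-\frac{n f(c)}{c}-u\cdot\nabla\ln c$ into $\ddt E_2=-2\intom\Delta\ln c\,\partial_t\ln c$ yields $\ddt E_2\le-2\intom|\Delta\ln c|^2+C\big(\|\nabla\ln c\|_{\leb4}^4+\|u\|_{\leb4}^2\|\nabla\ln c\|_{\leb4}^2\big)+C\intom\big(\tfrac{f(c)}{c}\big)^2 n^2$; and the Navier--Stokes energy identity supplies the matching inequality for $E_3$. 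Here the two structural hypotheses are decisive: the two-dimensional Gagliardo--Nirenberg inequality $\|\nabla w\|_{\leb4}^2\le C\|\Delta w\|_{\leb2}\|\nabla w\|_{\leb2}+C\|\nabla w\|_{\leb2}^2$ converts the quartic terms into $C\|\Delta\ln c\|_{\leb2}^2 E_2+CE_2^2$, absorbable into $-2\intom|\Delta\ln c|^2$ once $E_2$ is small, and $f'(0)=0$ makes $\tfrac{f(c)}{c}$ as small as desired once $\|c\|_{\leb\infty}$ is small, so that the reaction term is dominated by $\varepsilon(E_1+1)$; the Poincaré inequality $\|\nabla\ln c\|_{\leb2}^2\le C\|\Delta\ln c\|_{\leb2}^2$ and its analogues turn the remaining good terms into genuine decay. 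One is thus left with $\ddt(E_1+E_2+E_3)\le-\delta(E_1+E_2+E_3)+C(E_1+E_2+E_3)^2+R(t)$ with $R$ controlled by $\|c\|_{\leb\infty}$, while $\|c(\cdot,t)\|_{\leb\infty}\to 0$ follows from ODE comparison in $c_t\le\Delta c-\tfrac{\ol n_0}{2}f(c)-u\cdot\nabla c$ (valid once $n\ge\tfrac{\ol n_0}{2}$, using that $f>0$ on $(0,\infty)$). A standard continuity and bootstrap argument gives global existence with $E_1+E_2+E_3\to 0$, and then parabolic and Stokes regularity theory — now unobstructed by $\tfrac1c$, which enters only through the uniformly bounded and decaying quantity $\nabla\ln c$ — produces uniform $C^{2,1}$-bounds on $\Ombar\times[t_\star+1,\infty)$ and classical solvability on $\Ombar\times[t_\star,\infty)$.

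Finally I would identify the classical solution just produced with the generalized solution of Theorem~\ref{th:global_ex} on $[t_\star,\infty)$: restricted to this interval the generalized solution carries enough regularity to serve in a weak--strong comparison against the classical solution emanating from the common data $(n,c,u)(\cdot,t_\star)$, the logarithmic singularity causing no difficulty because the classical solution satisfies $c\ge c_\star>0$ on every finite time strip; this transfers smoothness to the original solution in the sense of Definition~\ref{def:sol_concept}, and \eqref{eq:ev_smooth:large_time} then results from the decay of $E_1,E_2,E_3$ and $\|c\|_{\leb\infty}$ together with interior Schauder estimates and the Arzel\`a--Ascoli theorem. I expect the main obstacle to be exactly the conditional step — keeping closed the feedback loop between the $|\nabla\ln c|^2$ source in the equation for $\ln c$ and the cross-term $\intom n|\nabla\ln c|^2$ in the entropy inequality, i.e.\ verifying that smallness genuinely propagates under the singular sensitivity, which is where the superlinear decay of $f$ at $0$ is indispensable — with a secondary difficulty lying in the clean extraction of a single time $t_\star$ at which all norms needed to launch the conditional theory (not merely $\leb1$- or $\leb2$-type quantities) are simultaneously small and the solution is smooth.
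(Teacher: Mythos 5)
Your strategy is close in spirit — a conditional energy functional for $\intom n\ln\frac{n}{\ol n_0}$, $\intom|\nabla\ln c|^2$ and a fluid term, with $f'(0)=0$ used to absorb the singular reaction terms after $\|c\|_{\leb\infty}$ becomes small — but the way you propose to connect the energy machinery to the generalized solution contains two genuine gaps that the paper carefully circumvents by working at the level of the \emph{approximate} solutions $(\neps,\ce,\ue)$ throughout.

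First, your final step ``identify the classical solution just produced with the generalized solution of Theorem~\ref{th:global_ex} on $[t_\star,\infty)$'' presupposes a weak--strong uniqueness result for Definition~\ref{def:sol_concept}, and none is available. The concept is deliberately very weak: $n$ is required only to be a $\ln$-supersolution satisfying a one-sided mass inequality, not a weak solution, and nothing in the paper (or, to my knowledge, in the literature for this class of logarithmic chemotaxis--Navier--Stokes problems) gives a weak--strong comparison principle that lets one conclude the generalized and classical objects coincide. Because of this, the paper never attempts to restart a classical evolution from the limit object; instead, Lemma~\ref{lm:cond_func} and Lemmas~\ref{lm:nabla_ln_n_nabla_w_uniform_spacetime}--\ref{lm:cond_func_result} are phrased uniformly in $\eps$, and then Lemma~\ref{lm:c2_bdd} upgrades to uniform $C^{2+\alpha,1+\alpha/2}$-bounds for $(\neps,\ce,\ue)$ on $[t_\star,\infty)$, whereupon Arzel\`a--Ascoli gives strong convergence to $(n,c,u)$ in $C^{2+\alpha',\,1+\alpha'/2}_{\loc}$, and the classical equations pass to the limit directly — no uniqueness or re-initialization is ever invoked.

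Second, and relatedly, you cannot even launch the conditional classical theory from the data $(n,c,u)(\cdot,t_\star)$: the generalized solution only carries $n(\cdot,t)\in\leb1$, $c(\cdot,t)\in\leb\infty$ (with $\nabla\ln c\in L^2_{\loc}$ in space-time, not pointwise in $t$), and $u(\cdot,t)\in\leb2$, which is far below the regularity needed for local classical well-posedness (and also does not guarantee $\inf_\Omega c(\cdot,t_\star)>0$). Your remark that ``a short additional smoothing step'' should help is exactly the point that needs substantial justification, and it is the step the paper avoids. A further smaller issue: your proposed ODE comparison for $\|c\|_{\leb\infty}$ requires a \emph{pointwise} lower bound $n\ge\ol n_0/2$, which does not follow from smallness of $\intom n\ln\frac{n}{\ol n_0}$ or of $\|n-\ol n_0\|_{\leb2}$; the paper instead keeps $\intom c^2$ inside the conditional functional \eqref{eq:def_f} (it is nonincreasing by \eqref{eq:cond_func:c_part}) and obtains its eventual smallness in Lemma~\ref{lm:c_l1_small} via Heihoff's inequalities \eqref{eq:heihoff_ineq:1}--\eqref{eq:heihoff_ineq:2} applied to $\intom f(\ce)$, rather than through pointwise lower bounds on $n$. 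Repairing your argument would essentially require re-deriving everything at the $\eps$-level, which is what the paper does.
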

This theorem relates to \cite[Theorem~1.2]{LiuLargetimeBehaviorTwodimensional2021} which proves eventual smoothness by requiring smallness of $\intom n_0$ instead of \eqref{eq:ev_smooth:cond_f}.
A key ingredient to both results is the energy functional \eqref{eq:def_f}, which is conditional in the sense that it only decreases throughout evolution if certain conditions are met,
namely that the functional is already sufficiently small at some time $t_0 \ge 0$.
That this functional indeed dissipates under certain conditions is verified in Lemma~\ref{lm:cond_func}, where we already rely on \eqref{eq:ev_smooth:cond_f}.

In order to actually make use of this conditional energy structure we then need to show that various quantities become small at some point in time.
The starting point is the quasi-energy inequality \eqref{eq:quasi_energy:ddt} holding for approximate solutions $(\neps, \ce, \ue)$, $\eps \in (0, 1)$.
Its right-hand side $\chi^2 \intom \neps \ce^{-1} f(\ce)$ becomes small if $f(\ce) = \ce$ and $\intom \neps = \intom n_0$ is small;
this is a core idea of \cite[Lemma~5.2]{LiuLargetimeBehaviorTwodimensional2021}.
Not wanting to impose a smallness condition of the initial data, we are forced to argue differently.
A key observation is that $\neps \ce^{-1} f(\ce)$ can be controlled favourably for $\ce$ bounded away from $0$
while for small $\ce$ the key condition \eqref{eq:ev_smooth:cond_f} shows that the respective term becomes small as well.
Thus, splitting the right-hand side of \eqref{eq:quasi_energy:ddt} in integrals over suitable subdomains allows us to obtain smallness of the first part of the conditional energy functional,
see Lemma~\ref{lm:nabla_ln_n_nabla_w_uniform_spacetime} and Lemma~\ref{lm:u_space_time}.

For the remaining term, we crucially rely on the inequalities \eqref{eq:heihoff_ineq:1} and \eqref{eq:heihoff_ineq:2} recently derived by Heihoff in \cite{HeihoffTwoNewFunctional2022}.
They improve on related inequalities by getting rid of an additional additive $L^1$ term on the right-hand side,
which make them favourably applicable also for large $\intom n_0$, see Lemma~\ref{lm:c_l1_small}.

Finally, the bounds implied by the conditional energy functional serve as a starting point for a bootstrap procedure sketched in Lemma~\ref{lm:c2_bdd},
which provides further estimates sufficiently strong to arrive at Theorem~\ref{th:ev_smooth} in Subsection~\ref{sec:proof_ev_smooth}.

\paragraph{Main result III: Global existence of classical solutions under smallness conditions.}
As a byproduct of the arguments developed for proving Theorem~\ref{th:ev_smooth}, 
we immediately obtain global classical solutions emanating from initial data already satisfying the conditions for the conditional energy functional.
\begin{theorem}\label{th:global_ex_classical}
  Let $\Omega \subset \R^2$ be a smooth, bounded domain, $\chi > 0$ and $\phi \in \sob2\infty$,
  suppose that $f$ complies with \eqref{eq:intro:cond_f} and \eqref{eq:ev_smooth:cond_f}
  and let $m_0 > 0$.
  There exists $\eta > 0$ such that whenever
  \begin{align*}
    (n_0, c_0, u_0) \in \con0 \times \sob1q \times \mc D(A^\beta)
    \qquad \text{for some $q > 2$ and $\beta \in (\tfrac12, 1)$},
  \end{align*}
  where $A$ denotes the Stokes operator,
  are such that $n_0 > 0$ and $c_0 > 0$ in $\Ombar$,
  \begin{align*}
    \intom n_0 = m_0
    \quad
    \|c_0\|_{\leb\infty} \le m_0
  \end{align*}
  and
  \begin{align*}
    \intom n_0 \ln \frac{n_0}{\ol n_0} + \intom c_0^2 + \intom \frac{|\nabla c_0|^2}{c_0^2} + \intom |u_0|^2 \le \eta,
  \end{align*}
  then there exists a global classical solution $(n, c, u, P)$ of \eqref{prob:fluid_incl}, which moreover satisfies \eqref{eq:ev_smooth:large_time}.
\end{theorem}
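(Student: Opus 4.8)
The plan is to show that the smallness condition on $(n_0,c_0,u_0)$ is precisely what allows one to enter the conditional regime of the energy functional \eqref{eq:def_f}, and then to run essentially the same bootstrap that underlies Theorem~\ref{th:ev_smooth}, only now starting from $t=0$ instead of from some large time $t_\star$. First I would invoke standard local well-posedness theory for chemotaxis--Navier--Stokes systems — fixed-point arguments in suitable parabolic spaces based on the Neumann heat semigroup and the Stokes semigroup $e^{-tA}$, as e.g.\ in the references cited in the introduction: for initial data of the stated regularity with $n_0, c_0 > 0$ in $\Ombar$, there exist $\tmax \in (0, \infty]$ and a classical solution $(n, c, u, P)$ of \eqref{prob:fluid_incl} on $\Ombar \times [0, \tmax)$ with $n, c > 0$ there, which is either global or satisfies an extensibility criterion, roughly of the form: if $\tmax < \infty$ then $\limsup_{t \nea \tmax}\bigl( \|n(\cdot, t)\|_{\leb\infty} + \|c(\cdot, t)\|_{\sob1q} + \|A^\beta u(\cdot, t)\|_{\leb2}\bigr) = \infty$ or $\inf_{\Ombar} c(\cdot, t) \to 0$ as $t \nea \tmax$. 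Since $\Ombar$ is compact and $c_0 > 0$, the singular coefficient $\tfrac1c$ is initially bounded, so this step is routine and, in contrast to Theorem~\ref{th:global_ex}, no approximation procedure is required.

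Next, I would observe that the hypotheses on $(n_0, c_0, u_0)$ guarantee that the functional \eqref{eq:def_f} evaluated at $t_0 = 0$ is at most $\eta'$, where $\eta'$ can be made as small as desired by choosing $\eta$ small (depending only on $\Omega$, $\chi$, $\phi$, $f$ and $m_0$, which enters through $\ol n_0$ and the $\leb\infty$ bound on $c_0$), so that the smallness threshold of Lemma~\ref{lm:cond_func} is met. That lemma — which already relies on \eqref{eq:ev_smooth:cond_f} — then shows that \eqref{eq:def_f} is nonincreasing and stays below $\eta'$ on $[0, \tmax)$ and that its dissipation rate is integrable over $[0, \tmax)$. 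In particular one obtains the time-uniform a~priori bound $\sup_{t \in [0, \tmax)}\bigl( \intom n \ln n + \intom c^2 + \intom \tfrac{|\nabla c|^2}{c^2} + \intom |u|^2\bigr) \le C$ for some $C > 0$, together with the associated spacetime integrals.

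With these bounds in hand, I would reproduce the bootstrap of Lemma~\ref{lm:c2_bdd}: the entropy bound combined with the two-dimensional functional inequalities \eqref{eq:heihoff_ineq:1}--\eqref{eq:heihoff_ineq:2} upgrades integrability of $n$ to $\leb p$ for every $p < \infty$, whereupon parabolic regularity applied to the second and third equations yields $\sob1p$ bounds for $c$ and $\mc D(A^\beta)$ bounds for $u$, and a further round of $L^p$--$L^q$ and Schauder estimates produces bounds for $n$, $c$ and $u$ in $C^{2+\alpha, 1+\frac\alpha2}(\Ombar \times [\tau, \tau+1])$ that are uniform in $\tau \ge 0$. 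Simultaneously, on each finite interval $[0, T]$ the $\leb\infty$ bound on $n$ together with the estimate $f(s) \le Ls$ valid for $s$ in a bounded range — a consequence of $f \in C^1$ with $f'(0) = 0$, so that $s \mapsto f(s)/s$ extends continuously to $0$ — turns the second equation into $c_t + u \cdot \nabla c - \Delta c \ge -L\|n\|_{\leb[\OmT]\infty}\, c$, whence a comparison argument gives $c \ge e^{-LT\|n\|_{\leb[\OmT]\infty}}\inf_\Omega c_0 > 0$ on $\OmT$. Feeding all of this into the extensibility criterion excludes $\tmax < \infty$, so the solution is global and classical. For the large-time assertion \eqref{eq:ev_smooth:large_time} I would then argue exactly as in Subsection~\ref{sec:proof_ev_smooth} with $t_\star = 0$: time-integrability of the dissipation associated with \eqref{eq:def_f} forces $\|\nabla \ln n(\cdot, t)\|_{\leb2}$, $\|\nabla \ln c(\cdot, t)\|_{\leb2}$ and $\|u(\cdot, t)\|_{\leb2}$ to tend to $0$ along a sequence, and then, by the uniform higher-order bounds and a Barbalat-type argument, along the whole time axis; interpolation against those bounds upgrades this to convergence in $\con2$, and the mass identity $\intom n(\cdot, t) = m_0$ pins down the limit of $n$ as $\ol n_0$.

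The main obstacle is not conceptual — given Lemma~\ref{lm:cond_func} and Lemma~\ref{lm:c2_bdd}, the statement really is a byproduct — but organizational: one must verify that the threshold for $\eta$ depends only on the quantities listed in the theorem (in particular not on $\inf_\Omega c_0$ or on derivatives of $c_0$ beyond what already appears in \eqref{eq:def_f}), and one must check that the local theory at the stated regularity yields a solution smooth enough for the differential identities behind \eqref{eq:def_f} to be justified directly, without passing through the approximate problems of the earlier sections.
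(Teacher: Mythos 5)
Your proposal follows essentially the same route as the paper: invoke local classical solvability, observe that the thresholds $K,L,M,\eta_0$ of Lemma~\ref{lm:cond_func} depend only on $\intom n_0 = m_0$ and $\|c_0\|_{\leb\infty}\le m_0$, apply a version of that lemma with $\eps=0$ starting from $t_0=0$, and run the bootstrap underlying Lemma~\ref{lm:c2_bdd} to obtain uniform $\con2$-bounds, rule out finite-time blow-up and deduce \eqref{eq:ev_smooth:large_time}. The paper's proof is terser, but the extra technical points you flag (pointwise lower bound for $c$ on finite time intervals to control the singular factor, the precise extensibility criterion, and the fact that the differential identities behind \eqref{eq:def_f} are available for the classical solution directly) are exactly the implicit steps one would need to check, so there is no substantive difference.
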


\paragraph{Plan of the paper.}
We first collect some useful general inequalities in Section~\ref{sec:prelims}
before proving Theorem~\ref{th:global_ex} in Section~\ref{sec:global_ex} and Theorem~\ref{th:ev_smooth} as well as Theorem~\ref{th:global_ex_classical} in Section~\ref{sec:ev_smooth};
we refer to the beginning of the later two sections for a discussion of the finer structure.
Let us here just point directly to the most crucial new steps:
Lemma~\ref{lm:nc_uniform_int} asserts uniform integrability of $\neps f(\ce)$ 
and key ideas relying on \eqref{eq:ev_smooth:cond_f} are employed in Lemma~\ref{lm:nabla_ln_c_eta}, Lemma~\ref{lm:cond_func} and Lemma~\ref{lm:nabla_ln_n_nabla_w_uniform_spacetime}.

\paragraph{Notation.}
Let $\Omega \subset \R^N$, $N \in \N$, be a smooth, bounded domain.
Throughout the article, we abbreviate $\ol \varphi \defs \frac1{|\Omega|} \intom \varphi$ for $\varphi \in \leb1$,
set $L_\sigma^2(\Omega; \R^N) \defs \{\,\varphi \in \leb2 : \nabla \cdot \varphi = 0 \text{ in } \mc D'(\Omega)\,\}$, where $\mc D'(\Omega)$ is the space of distributions on $\Omega$,
as well as $W_{0, \sigma}^{1, 2}(\Omega; \R^N) \defs W_0^{1, 2}(\Omega; \R^N) \cap L_{\sigma}^2(\Omega; \R^N)$.
We further denote by $\mc P$ the Helmholtz projection on $L^2(\Omega; \R^N)$ and by $A$ the Stokes operator $-\mc P \Delta$ on $L^2(\Omega; \R^N)$ with domain $W^{2, 2}(\Omega; \R^N) \cap W_{0, \sigma}^{1, 2}(\Omega; \R^N)$,
and often write $\leb2$ instead of $L^2(\Omega; \R^N)$ etc.\ when the codomain can be inferred from the context.
Moreover, uppercase constants $C_i$ are unique throughout the article while lowercase constants $c_i$ are ``local'' to each proof.

\section{Preliminaries: The Csisz\'ar--Kullback and Heihoff inequalities}\label{sec:prelims}
This preliminary section is concerned with multiple estimates regarding $\intom \varphi \ln \frac{\varphi}{\ol \varphi}$ and $\intom \frac{|\nabla \varphi|^2}{\varphi^2}$ for positive functions $\varphi$.
First, we recall the classical Csisz\'ar--Kullback inequality which gives a nontrivial lower bound for the former term.
\begin{lemma}\label{lm:csiszar-kullback}
  Let $\Omega \subset \R^N$, $N \in \N$, be a smooth, bounded domain.
  For all nonnegative $0 \not\equiv \varphi \in \lebl$, the inequality
  \begin{align}\label{eq:csiszara-kullback:statement}
    \left( \intom |\varphi - \ol \varphi| \right)^2 \le 2 \left(\intom \varphi \right) \left( \intom \varphi \ln \frac{\varphi}{\ol \varphi} \right)
  \end{align}
  holds. 
\end{lemma}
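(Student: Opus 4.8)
The plan is to deduce \eqref{eq:csiszara-kullback:statement} from an elementary one-variable inequality by a single weighted application of the Cauchy--Schwarz inequality (this being the classical Csisz\'ar--Kullback--Pinsker estimate, one could alternatively cite the literature, but a self-contained argument is short). Set $m \defs \intom \varphi \in (0, \infty)$ and note that $\intom \ol\varphi = m$, so that $\intom (\varphi - \ol\varphi) = 0$. Since $|\varphi - \ol\varphi| \le \varphi + \ol\varphi \in \leb1$, the left-hand side of \eqref{eq:csiszara-kullback:statement} is finite; if its right-hand side is infinite there is nothing to prove, so I may assume $\intom \varphi \ln \frac{\varphi}{\ol\varphi} < \infty$. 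Because $s \mapsto s \ln \frac{s}{\ol\varphi}$ attains its minimum $-\frac{\ol\varphi}{\ure}$ on $[0, \infty)$, the function $\varphi \ln \frac{\varphi}{\ol\varphi}$, and hence also $\varphi \ln \frac{\varphi}{\ol\varphi} - \varphi + \ol\varphi \ge 0$, lies in $\leb1$.

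The heart of the argument is the pointwise estimate
\begin{align*}
  \frac{(a - b)^2}{a + 2b} \le \frac23 \left( a \ln \tfrac ab - a + b \right)
  \qquad \text{for all } a \ge 0 \text{ and } b > 0,
\end{align*}
whose proof I postpone. Granting it, substituting $a = \varphi$, $b = \ol\varphi$ and integrating shows that $\frac{(\varphi - \ol\varphi)^2}{\varphi + 2\ol\varphi} \in \leb1$ and, using $\intom(\varphi - \ol\varphi) = 0$, that $\intom \frac{(\varphi - \ol\varphi)^2}{\varphi + 2\ol\varphi} \le \frac23 \intom \varphi \ln \frac{\varphi}{\ol\varphi}$. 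On the other hand, Cauchy--Schwarz yields
\begin{align*}
  \left( \intom |\varphi - \ol\varphi| \right)^2
  = \left( \intom \sqrt{\varphi + 2\ol\varphi}\, \cdot \frac{|\varphi - \ol\varphi|}{\sqrt{\varphi + 2\ol\varphi}} \right)^2
  \le \left( \intom (\varphi + 2\ol\varphi) \right) \intom \frac{(\varphi - \ol\varphi)^2}{\varphi + 2\ol\varphi},
\end{align*}
and since $\intom(\varphi + 2\ol\varphi) = 3m = 3\intom \varphi$, combining the two displays gives $\left( \intom |\varphi - \ol\varphi| \right)^2 \le 3\intom\varphi \cdot \frac23 \intom \varphi \ln \frac{\varphi}{\ol\varphi} = 2\left(\intom\varphi\right)\left(\intom\varphi\ln\frac{\varphi}{\ol\varphi}\right)$, which is the claim.

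It remains to verify the pointwise inequality, and this one-variable calculation is the only genuinely computational ingredient. Dividing by $b$ and setting $t \defs \frac ab \ge 0$, it is equivalent to $h(t) \ge 0$ on $[0, \infty)$, where $h(t) \defs \frac23(t\ln t - t + 1) - \frac{(t-1)^2}{t+2}$, with $h(0) = \frac23 - \frac12 > 0$. I would compute $h(1) = 0$, then $h'(t) = \frac23 \ln t - \frac{(t-1)(t+5)}{(t+2)^2}$ with $h'(1) = 0$, and $h''(t) = \frac{2}{3t} - \frac{18}{(t+2)^3}$, which is nonnegative on $(0, \infty)$ because $(t+2)^3 \ge 27t$ by the AM--GM inequality applied to $t, 1, 1$. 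Hence $h'$ is nondecreasing, so $h' \le 0$ on $(0, 1]$ and $h' \ge 0$ on $[1, \infty)$; thus $h$ attains its global minimum at $t = 1$, where $h(1) = 0$, which proves $h \ge 0$. Equality at $t = 1$ also shows that the constant $2$ in \eqref{eq:csiszara-kullback:statement} is optimal. The mild subtlety of this monotonicity chain is where I expect the little actual work to lie; everything else is bookkeeping.
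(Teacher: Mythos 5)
Your proposal is correct; I checked the computation and the logic goes through. The route is genuinely different from the paper's: the paper simply normalizes $\varphi$ so that $\ol\varphi = 1$ and cites Csisz\'ar's 1967 theorem applied to the probability measures $\varphi\,\mathcal L$ and $\mathcal L$ (with $\mathcal L$ normalized Lebesgue measure), whereas you give a self-contained elementary derivation: the pointwise bound $\frac{(a-b)^2}{a+2b} \le \frac23\bigl(a\ln\frac ab - a + b\bigr)$ (verified by a clean convexity argument using $(t+2)^3 \ge 27t$ via AM--GM, which I confirmed after reproducing $h''(t) = \frac{2}{3t} - \frac{18}{(t+2)^3}$), combined with a weighted Cauchy--Schwarz with weight $\varphi + 2\ol\varphi$, whose total mass $3\intom\varphi$ is exactly what conspires with the $\frac23$ to reproduce the constant $2$. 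This is essentially the textbook proof of the Csisz\'ar--Kullback--Pinsker inequality, so what you gain is a proof that does not depend on access to the reference (which the paper's acknowledgments suggest was nontrivial to obtain); what the paper's version buys is brevity. One minor caveat: the remark that equality at $t=1$ alone establishes optimality of the constant $2$ is not quite a proof of sharpness (Cauchy--Schwarz must also be saturated asymptotically), though sharpness is of course a known fact and not needed for the lemma.
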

\begin{proof}
  We first suppose that $0 \le \varphi \in \lebl$ is such that $\ol \varphi = 1$.
  Then we can apply \cite[Theorem~4.1]{CsiszarInformationtypeMeasuresDifference1967} % def. of I(\mu_1 || \mu_2) in (1.8) and def. of |\mu_1 - \mu_2| after (1.11)).
  to the probability measures $\mu_1(\mathrm dx) \defs \varphi(x) \mc L(\mathrm dx)$ and $\mu_2(\mathrm dx) = \mc L(\mathrm dx)$,
  where $\mc L$ denotes the Lebesgue measure on $\Omega$ multiplied with $\frac1{|\Omega|}$,
  to obtain
  \begin{align}\label{eq:csiszara-kullback:est_int_1}
    \left( \frac1{|\Omega|} \intom |\varphi - 1| \right)^2 \le \frac2{|\Omega|} \intom \varphi \ln \varphi.
  \end{align}
  For arbitrary nonnegative $0 \not\equiv \varphi \in \lebl$,
  we infer \eqref{eq:csiszara-kullback:statement} from \eqref{eq:csiszara-kullback:est_int_1} applied to $\frac{\varphi}{\ol \varphi}$.
\end{proof}

Next, we state two inequalities recently derived by Heihoff in \cite{HeihoffTwoNewFunctional2022},
which improve on the inequalities obtained in \cite[Lemma~3.2]{HeihoffGlobalMasspreservingSolutions2020} (see also \cite[Lemma~2.2 and Lemma~2.3]{WinklerSmallmassSolutionsTwodimensional2020})
as consequences of the Trudinger–Moser inequality (cf.\ \cite{TrudingerImbeddingsOrliczSpaces1967} and \cite{MoserSharpFormInequality1970}).
These improvements entail getting rid of an additive term of the form $\intom \psi$ at the cost of potentially enlarging certain multiplicative constants
and, as already noticed in the introduction, form an essential cornerstone in the proof of Theorem~\ref{th:ev_smooth}.
\begin{lemma}\label{lm:heihoff_ineq}
  Let $\Omega \subset \R^2$ be a smooth, bounded domain. Then there exists $\newgc{heihoff} > 0$ such that
  \begin{align}\label{eq:heihoff_ineq:1}
        \intom \varphi (\psi - \ol \psi)
    \le \frac{\eta}{4 \gc{heihoff}} \left( \intom \psi \right) \intom |\nabla \varphi|^2 + \frac1\eta \intom \psi \ln \left(\frac{\psi}{\ol \psi} \right)
  \end{align}
  holds for all $\varphi \in \con1$ and $0 < \psi \in \con0$ and all $\eta > 0$,
  and that
  \begin{align}\label{eq:heihoff_ineq:2}
        \intom \psi \ln \left( \frac{\psi}{\ol \psi} \right)
    \le \frac{1}{\gc{heihoff}} \left( \intom \psi \right) \intom \frac{|\nabla\psi|^2}{\psi^2}
  \end{align}
  holds for all $0 < \psi \in \con1$.
\end{lemma}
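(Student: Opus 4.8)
The quickest route is simply to cite \cite{HeihoffTwoNewFunctional2022}, where both \eqref{eq:heihoff_ineq:1} and \eqref{eq:heihoff_ineq:2} are established. For a more self-contained treatment, the plan is to reduce both inequalities to the single ``additive-constant-free'' Trudinger--Moser estimate
\begin{align*}
  \ln\left(\frac1{|\Omega|}\intom \ure^{w - \ol w}\right) \le \frac1{4\gc{heihoff}}\intom|\nabla w|^2
  \qquad \text{for all } w \in \con1,
\end{align*}
which holds with some $\gc{heihoff} > 0$ because $\Omega$ is a planar domain.

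To obtain \eqref{eq:heihoff_ineq:1} from this, I would invoke the Legendre duality between the exponential and the Boltzmann entropy, i.e.\ the pointwise inequality $ab \le \ure^a + b\ln b - b$ for $a \in \R$, $b \ge 0$. Applying it with $b = \frac{\psi}{\ol\psi}$ and $a = \eta(\varphi - \ol\varphi) - \ln\big(\frac1{|\Omega|}\intom \ure^{\eta(\varphi - \ol\varphi)}\big)$, integrating against $\frac1{|\Omega|}\dx$ over $\Omega$, and using $\frac1{|\Omega|}\intom\frac{\psi}{\ol\psi} = 1$ together with $\intom(\varphi - \ol\varphi)\psi = \intom\varphi(\psi - \ol\psi)$, one is left with $\eta\intom\varphi(\psi-\ol\psi) \le (\intom\psi)\ln\big(\frac1{|\Omega|}\intom\ure^{\eta(\varphi-\ol\varphi)}\big) + \intom\psi\ln\frac{\psi}{\ol\psi}$ for every $\eta > 0$. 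Inserting the Trudinger--Moser estimate above with $w = \eta\varphi$ (so that its right-hand side becomes $\frac{\eta^2}{4\gc{heihoff}}\intom|\nabla\varphi|^2$) and dividing by $\eta$ then yields exactly \eqref{eq:heihoff_ineq:1}.

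For \eqref{eq:heihoff_ineq:2} I would specialise \eqref{eq:heihoff_ineq:1} to $\varphi = \ln\psi$, which lies in $\con1$ since $\psi$ is positive and $C^1$ on the compact set $\Ombar$, so that $|\nabla\varphi|^2 = \frac{|\nabla\psi|^2}{\psi^2}$. Concavity of $\ln$ and Jensen's inequality give $\ol{\ln\psi} \le \ln\ol\psi$, hence $\intom(\ln\psi)(\psi-\ol\psi) = \intom\psi\ln\frac{\psi}{\ol\psi} + |\Omega|\,\ol\psi(\ln\ol\psi - \ol{\ln\psi}) \ge \intom\psi\ln\frac{\psi}{\ol\psi}$; choosing $\eta = 2$ in \eqref{eq:heihoff_ineq:1} and absorbing the resulting $\frac12\intom\psi\ln\frac{\psi}{\ol\psi}$ (which is finite and nonnegative) into the left-hand side then produces \eqref{eq:heihoff_ineq:2}.

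The genuinely nontrivial step---and the one I expect to take the most work---is the constant-free Trudinger--Moser estimate itself: the classical Trudinger--Moser inequality only yields it up to an additive constant, and removing that constant (at the cost of the multiplicative constant) requires combining the classical inequality in the regime of large Dirichlet energy with a Poincar\'e-type bound in the regime of small Dirichlet energy. This is precisely the refinement carried out by Heihoff, so in practice I would quote \cite{HeihoffTwoNewFunctional2022} for this estimate and assemble \eqref{eq:heihoff_ineq:1}--\eqref{eq:heihoff_ineq:2} as above.
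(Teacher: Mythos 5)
Your proposal is correct and, at its core, takes the same route as the paper: both cite \cite{HeihoffTwoNewFunctional2022} for the genuinely hard analytic input. The paper's proof of Lemma~\ref{lm:heihoff_ineq} is essentially just that citation plus one extra observation you do not mention, namely that \cite[Theorem~1.1]{HeihoffTwoNewFunctional2022} formally requires $\Omega$ to be finitely connected, and that this hypothesis is automatic here because $\partial\Omega$ is a compact smooth $1$-manifold and hence a finite union of circles (so $\R^2\setminus\Omega$ has finitely many components). If you intend to rely on the citation, you should still address that discrepancy in hypotheses.

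What you add beyond the paper is a clean self-contained reduction of both \eqref{eq:heihoff_ineq:1} and \eqref{eq:heihoff_ineq:2} to the single constant-free Moser--Trudinger bound $\ln\bigl(\frac1{|\Omega|}\intom \ure^{w-\ol w}\bigr)\le\frac1{4\gc{heihoff}}\intom|\nabla w|^2$, via the Fenchel duality $ab\le\ure^a+b\ln b-b$ with the normalized exponent and $b=\psi/\ol\psi$, followed by the choice $\varphi=\ln\psi$, $\eta=2$, and Jensen's inequality $\ol{\ln\psi}\le\ln\ol\psi$ to pass to \eqref{eq:heihoff_ineq:2}. I checked the algebra: after integrating against $\frac1{|\Omega|}\dx$ and using $\intom(\varphi-\ol\varphi)\psi=\intom\varphi(\psi-\ol\psi)$ one indeed arrives at $\eta\intom\varphi(\psi-\ol\psi)\le(\intom\psi)\ln\bigl(\frac1{|\Omega|}\intom\ure^{\eta(\varphi-\ol\varphi)}\bigr)+\intom\psi\ln\frac{\psi}{\ol\psi}$, and inserting the Moser--Trudinger bound with $w=\eta\varphi$ and dividing by $\eta$ gives \eqref{eq:heihoff_ineq:1}; the absorption step for \eqref{eq:heihoff_ineq:2} is legitimate since $\intom\psi\ln\frac{\psi}{\ol\psi}$ is finite and nonnegative for positive $\psi\in\con1$. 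Note, however, that this reduction does not save you from citing Heihoff, since the constant-free Moser--Trudinger estimate is precisely the content you would still need to import; the paper simply cites the two functional inequalities in the form in which Heihoff states them rather than an intermediate exponential estimate, but the substance is the same.
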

\begin{proof}
  See \cite[Theorem~1.1]{HeihoffTwoNewFunctional2022}.
  While that theorem actually additionally requires that $\Omega$ is finitely connected, this assumption turns out to be superfluous.
  Indeed, as $\partial \Omega$ is smooth and compact, it is the union of finitely many connected compact one-dimensional manifolds,
  all of which are diffeomorphic to circles by the classification theorem \cite[theorem in the appendix]{MilnorTopologyDifferentiableViewpoint1965}.
  Therefore, $\R^2 \setminus \Omega$ contains only finitely many connected components.
\end{proof}

\section{Global existence of generalized solutions}\label{sec:global_ex}
In this section, we prove Theorem~\ref{th:global_ex}, i.e.\ the global existence of generalized solutions to \eqref{prob:fluid_incl}.
To that end, we fix throughout this section a smooth, bounded domain $\Omega \subset \R^N$, $N \ge 2$, $\chi > 0$, $\phi \in \sob2\infty$ as well as $f$ and $(n_0, c_0, u_0)$
complying with \eqref{eq:intro:cond_f}--\eqref{eq:intro:cond_init}.

We start by introducing and discussing our notion of global generalized solutions in Subsection~\ref{sec:sol_concept}
before we construct global classical solutions to approximate problems in Subsection~\ref{sec:ge_approx}, for which $\eps$-independent bounds are obtained in Subsection~\ref{sec:apriori}.
In Subsection~\ref{sec:strong_conv}, we then come to the main novelty compared to the predecessors \cite{WangGlobalLargedataGeneralized2016} and \cite{LiuLargetimeBehaviorTwodimensional2021}:
Lemma~\ref{lm:nc_uniform_int} shows uniform integrability of $\neps f(\ce)$ also in higher spatial dimensions,
which allows us to obtain convergence properties sufficient for taking the limit in each term of the weak formulations.

\subsection{Solution concept}\label{sec:sol_concept}
Our notion of generalized solvability follows well-established definitions in the context of chemotaxis systems with low regularity;
see for instance \cite[Definition~2.3]{WinklerLargedataGlobalGeneralized2015} for an early example,
\cite{FuestStrongConvergenceWeighted2022} for an overview
and \cite[Definition~4.1]{LiuLargetimeBehaviorTwodimensional2021} for a closely related setting.
That is, while for $c$ and $u$ usual weak formulations apply,
for the first solution component we only require that $\ln(n+1)$ is a weak supersolution of the corresponding equation and that its mass is bounded by the initial mass.
\begin{definition}\label{def:sol_concept}
  We call a triple
  \begin{align*}
    (n, c, u) \in L^\infty((0, \infty); \leb1) \times L^\infty(\Omega \times (0, \infty)) \times L_{\loc}^2([0, \infty); W_0^{1,2}(\Omega; \R^N))
  \end{align*}
  with $n \ge 0, c > 0, \nabla \cdot u = 0$ a.e.\ in $\Omega \times (0, \infty)$ and
  \begin{align*}
    \ln(n+1), \, \ln c \in L_{\loc}^2([0, \infty); \sob12)
  \end{align*}
  a \emph{global generalized solution} of \eqref{prob:fluid_incl} if
  \begin{itemize}
    \item
      $n$ is a weak $\ln$-supersolution of the first subproblem in \eqref{prob:fluid_incl} in the sense that
      \begin{align}\label{eq:sol_concept:n_ln_supersol}
        &\pe  - \intninfom \ln(n+1) \varphi_t
              - \intom \ln(n_0+1) \varphi(\cdot, 0) \notag \\
        &\ge  \intninfom \frac{|\nabla n|^2}{(n+1)^2} \varphi
              - \intninfom \frac{\nabla n \cdot \nabla \varphi}{n+1} \notag \\
        &\pe  - \chi \intninfom \frac{n \nabla n \cdot \nabla c}{(n+1)^2c} \varphi
              + \chi \intninfom \frac{n \nabla c \cdot \nabla \varphi}{(n+1)c} \notag \\
        &\pe  + \intninfom \ln(n+1) (u \cdot \nabla \varphi)
        \qquad \text{for all $0 \le \varphi \in C_c^\infty(\Ombarinf)$},
      \end{align}

    \item
      there is a null set $N \subset (0, \infty)$ such that
      \begin{align}\label{eq:sol_concept:mass_ineq}
        \intom n(\cdot, t) \le \intom n_0
        \qquad \text{for all $t \in (0, \infty) \setminus N$},
      \end{align}

    \item
      $c$ is a weak solution of the second subproblem in \eqref{prob:fluid_incl} in the sense that
      \begin{align}\label{eq:sol_concept:v_weak_sol}
          - \intninfom c \varphi_t
          - \intom c_0 \varphi(\cdot, 0)
        = - \intninfom \nabla c \cdot \nabla \varphi
          - \intninfom n f(c) \varphi
          + \intninfom c (u \cdot \nabla \varphi)
      \end{align}
      for all $\varphi \in C_c^\infty(\Ombarinf)$ and

    \item
      $u$ is a weak solution of the third subproblem in \eqref{prob:fluid_incl} in the sense that
      \begin{align}\label{eq:sol_concept:u_weak_sol}
          - \intninfom u \cdot \varphi_t
          - \intom u_0 \cdot \varphi(\cdot, 0)
        = - \intninfom \nabla u \cdot \nabla \varphi 
          + \intninfom (u \otimes u) \cdot \nabla \varphi
          + \intninfom n \nabla \phi \cdot \varphi
      \end{align}
      for all $\varphi \in C_c^\infty(\Omega \times [0, \infty); \R^N)$ with $\nabla \cdot \varphi = 0$ in $\Omega \times (0, \infty)$.
  \end{itemize}
\end{definition}

\begin{remark}\label{rm:sol_concept}
  \begin{enumerate}
    \item 
      Let us note that this solution concept is compatible with the notion of classical solutions
      in the sense that if $(n, c, u)$ is a \emph{smooth} global generalized solution in the sense of Definition~\ref{def:sol_concept},
      then $(n, c, u)$ together with some $P \in C^{1, 0}(\Ombar \times (0, \infty))$ is also a classical solution.
      Indeed, for $c$ and $u$ this follows by classical arguments while for $n$ this can be shown as in \cite[Lemma~2.1]{WinklerLargedataGlobalGeneralized2015}.

    \item
      Arguably, an important feature of \emph{classical solutions} of \eqref{prob:fluid_incl} is that the first solution component conserves mass. 
      While this is not guaranteed by the upper estimate in \eqref{eq:sol_concept:mass_ineq},
      in the two-dimensional setting this follows immediately from \eqref{eq:basic_apriori} and \eqref{eq:eps_sea_0:n_l1}.
      On the other hand, for $N \ge 3$ we cannot show that \eqref{eq:eps_sea_0:n_l1} holds;
      Lemma~\ref{lm:nc_uniform_int} ``only'' asserts uniform integrability of $\neps f(\ce)$ and not of $\neps$ (and we also lack a uniform lower bound for $f(\ce)$).
      Thus, obtaining mass conservation for generalized solutions also in the higher dimensional setting appears to be a difficult task, which we leave to further research.
  \end{enumerate}
\end{remark}

\subsection{Global classical solutions to regularized problems}\label{sec:ge_approx}
At the end of the present section, we construct global generalized solutions of \eqref{prob:fluid_incl} as the limit of global solutions to
\begin{align}\label{prob:approx}
  \begin{cases}
    \net + \ue \cdot \nabla \neps = \Delta \neps - \chi \nabla \cdot (\frac{\neps}{(1+\eps \neps) \ce} \nabla \ce) & \text{in $\Omega \times (0, \infty)$}, \\
    \cet + \ue \cdot \nabla \ce = \Delta \ce - \neps f(\ce)                                                        & \text{in $\Omega \times (0, \infty)$}, \\
    \uet + (\ue \cdot \nabla) \ue = \Delta \ue + \nabla P_\eps + \neps \nabla \phi, \quad \nabla \cdot \ue = 0     & \text{in $\Omega \times (0, \infty)$}, \\
    \partial_\nu \neps = \partial_\nu \ce = 0, \, \ue= 0                                                           & \text{on $\partial \Omega \times (0, \infty)$}, \\
    (\neps, \ce, \ue)(\cdot, 0) = (\nne, \cne, \une)                                                               & \text{in $\Omega$}.
  \end{cases}
\end{align}
for appropriate  $\nne$, $\cne$ and $\une$.
For sufficiently regular approximative initial data, the regularization term in the first equation guarantees that such global classical solutions do indeed exist.
\begin{lemma}\label{lm:global_ex_approx}
  For each $\eps \in (0, 1)$, there are $q > N$, $\beta \in (\frac12, 1)$,
  \begin{align}\label{eq:init:def_nc}
    \begin{cases}
      \nne \in \con0   \quad \text{with $\nne > 0$ in $\Ombar$ and $\intom \neps = \intom n_0$}, \\
      \cne \in \sob1q  \quad \text{with $\infc \le \cne \le \|c_0\|_{\leb\infty}$ in $\Ombar$}
    \end{cases}
  \end{align}
  and
  \begin{align}\label{eq:init:def_u}
    \une \begin{cases}
      \equiv 0 & \text{if $u_0 \equiv 0$}, \\
      \in \mc D(A^\beta) \quad \text{with $\intom |\une|^2 \le \intom |u_0|^2 +1$} & \text{if $u_0 \not\equiv 0$}
    \end{cases}
  \end{align}
  such that
  \begin{alignat}{2}
    (\nne, \ln \nne) &\to (n_0, \ln n_0) &&\qquad \text{in $(\leb1)^2$ and a.e.\ in $\Omega$}, \label{eq:init_approx:n}\\
    \cne &\to c_0 &&\qquad \text{in $\leb2$ and a.e.\ in $\Omega$}, \label{eq:init_approx:c}\\
    \une &\to u_0 &&\qquad \text{in $L^2(\Omega; \R^N)$ and a.e.\ in $\Omega$} \label{eq:init_approx:u}
  \end{alignat}
  as $\eps \sea 0$.
  Moreover, for each $\eps \in (0, 1)$, there exists a global classical solution
  \begin{align*}
    (\neps, \ce, \ue, P_\eps) \in \left( C^0(\Ombar \times [0, \infty) \cap C^{2, 1}(\Ombar \times (0, \infty) \right)^{1+1+N} \times C^{1, 0}(\Ombar \times (0, \infty))
  \end{align*}
  of \eqref{prob:approx} with $\ue \equiv 0$ and $P_\eps \equiv 0$ if $u_0 \equiv 0$ and $\phi \equiv 0$.
  This solution further satisfies $\neps, \ce > 0$ in $\Ombar \times (0, \infty)$,
  \begin{align}\label{eq:basic_apriori}
    \intom \neps = \intom n_0
    \quad \text{and} \quad
    \ce \le \|c_0\|_{\leb\infty}.
  \end{align}
\end{lemma}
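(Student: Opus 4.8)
The plan is to split the statement into the construction of the approximate initial data and the global existence of classical solutions to the regularized problem, treating the two essentially independently. For the initial data, I would first take mollifications of $n_0$: since $n_0 \in \leb1$ with $n_0 > 0$ a.e.\ and $\ln n_0 \in \leb1$, standard convolution-type approximation on $\Omega$ (using e.g.\ an extension and a mollifier, then restricting, and cutting away from zero from below by adding a small $\eps$-dependent constant if necessary) yields $\nne \in C^0(\Ombar)$ with $\nne > 0$ on $\Ombar$ and $(\nne, \ln \nne) \to (n_0, \ln n_0)$ in $(\leb1)^2$ and pointwise a.e.; after this, one rescales $\nne$ multiplicatively by the factor $\frac{\intom n_0}{\intom \nne} \to 1$ to enforce the exact mass identity $\intom \nne = \intom n_0$ while preserving all other properties (the logarithm picks up an additive constant tending to $0$). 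For $\cne$, since $\infc \le c_0$ a.e.\ and $c_0 \in \leb\infty$, one can mollify $c_0$, truncate it into $[\infc, \|c_0\|_{\leb\infty}]$, and, if needed, further mollify to land in $\sob1q$ for some fixed $q > N$; this preserves the pointwise two-sided bound and gives $\cne \to c_0$ in $\leb2$ and a.e. For $\une$: if $u_0 \equiv 0$ set $\une \equiv 0$; otherwise, since $u_0 \in L^2_\sigma$, approximate it in $L^2$ by a sequence in $\mc D(A^\beta)$ (which is dense in $L^2_\sigma$ for $\beta \in (\frac12,1)$), choosing $\une$ so close to $u_0$ that $\intom |\une|^2 \le \intom |u_0|^2 + 1$, and pass to a subsequence to obtain a.e.\ convergence.

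For the global existence of $(\neps,\ce,\ue,P_\eps)$, I would invoke a standard fixed-point / extensibility argument for chemotaxis–Navier–Stokes systems. First, local existence and uniqueness of a classical solution on a maximal interval $(0,T_{\max,\eps})$ follows from a Banach fixed-point argument in suitable parabolic Hölder or $L^p$–$W^{1,q}$ spaces, using that the regularized taxis coefficient $\frac{1}{(1+\eps\neps)\ce}$ is bounded (by $\frac{1}{\infc}$, say, for the initial data, and then continued) and that the initial data have the stated regularity ($\nne \in C^0$, $\cne \in \sob1q$ with $q>N$, $\une \in \mc D(A^\beta)$ with $\beta>\frac12$ so $\une \in C^0$); Neumann heat semigroup and Stokes semigroup smoothing then yield the claimed $C^0([0,\infty)) \cap C^{2,1}((0,\infty))$ regularity of $\neps,\ce,\ue$ and $C^{1,0}$ of $P_\eps$. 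The elementary a~priori facts needed for global extension are: (i) $\ce > 0$ and $\ce \le \|c_0\|_{\leb\infty}$ by the parabolic comparison principle applied to the second equation (the consumption term $-\neps f(\ce) \le 0$ pushes $\ce$ down, and $\neps f(\ce) \ge 0$ keeps it nonnegative; strict positivity then follows from the strong maximum principle); (ii) $\neps > 0$ by the maximum principle for the first equation, written in divergence form with bounded, smooth coefficients on compact time subintervals; and (iii) mass conservation $\intom \neps = \intom n_0$ by integrating the first equation over $\Omega$ (both the convective and the taxis term are in divergence form and vanish under the Neumann/no-flux boundary conditions, and $\nabla\cdot\ue = 0$ with $\ue|_{\partial\Omega} = 0$). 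Finally, in the special case $u_0 \equiv 0$ and $\phi \equiv 0$, uniqueness forces $\ue \equiv 0$ (hence $P_\eps \equiv 0$), since $(\neps,\ce,0,0)$ then solves the full regularized system.

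The main obstacle is the global-in-time extensibility of the approximate classical solutions, i.e.\ ruling out finite-time blow-up of $(\neps,\ce,\ue)$ in the norms controlling the local theory. Here the $\eps$-regularization is decisive: because $\frac{\neps}{(1+\eps\neps)\ce} \le \frac{1}{\eps\infc}$ wherever $\ce \ge \infc$—and more robustly because $\frac{1}{1+\eps\neps} \le 1$ makes the effective chemotactic drift $\chi \frac{\neps}{(1+\eps\neps)\ce}\nabla\ce$ amenable to the bounds available from $\ce \le \|c_0\|_{\leb\infty}$ together with regularity of $\ce$—the first equation becomes a linear drift–diffusion equation with coefficients that can be bootstrapped from $\ce,\ue$, while $\ce$ and $\ue$ satisfy equations whose nonlinearities ($\neps f(\ce)$ and $\neps\nabla\phi$) are controlled once $\neps$ is under control in appropriate spaces. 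One therefore runs the usual bootstrap: $L^p$ bounds for $\neps$ from semigroup estimates, then $W^{1,q}$ bounds for $\ce$, $\mc D(A^\beta)$ bounds for $\ue$, then Hölder bounds for $\neps$, closing the loop on any finite time interval and contradicting the blow-up alternative. Since this is entirely standard for regularized chemotaxis–fluid systems (see e.g.\ the references cited for analogous systems), I would state it as a routine consequence and relegate the details, noting only that it is precisely the $(1+\eps\neps)^{-1}$ factor that prevents the genuine chemotactic singularity from obstructing global existence at the approximate level.
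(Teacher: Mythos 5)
Your proposal follows essentially the same route as the paper: approximate the initial data by mollification/truncation (for $n$ and $c$) and by density of $\mc D(A^\beta)$ in $L^2_\sigma$ (for $u$, cf.\ Galdi), then invoke a standard local existence result plus a bootstrap-based extensibility criterion, relying on the $\eps$-regularization to tame the taxis flux and on the fact that the full Navier--Stokes part only needs to be solved in two dimensions. The paper handles the global existence step by citing \cite[Lemma~2.1]{WinklerGlobalLargedataSolutions2012} and \cite[Lemma~2.2]{LiuLargetimeBehaviorTwodimensional2021} rather than sketching the bootstrap, and it verifies positivity via the strong maximum principle and mass conservation via integration, exactly as you describe. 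One small imprecision worth flagging: the bound $\frac{\neps}{(1+\eps\neps)\ce} \le \frac{1}{\eps\infc}$ ``wherever $\ce \ge \infc$'' is only guaranteed at $t=0$ (the lower bound $\infc$ is not propagated), and $\frac{1}{1+\eps\neps}\le 1$ by itself does not capture the role of the regularization (the unregularized problem also satisfies that bound); the relevant point is $\frac{\neps}{1+\eps\neps} \le \frac{1}{\eps}$ combined with the time-local positive lower bound for $\ce$ that follows from continuity and the strong maximum principle on any compact time interval.
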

\begin{proof}
  Since $(n_0, c_0, u_0)$ satisfy \eqref{eq:intro:cond_init},
  the existence of such an approximating family of initial data can be seen by typical convolution arguments for the first two components
  and by methods as in the proof of \cite[Theorem~III.4.1]{GaldiIntroductionMathematicalTheory2011} for the last one.
  
  For each $\eps > 0$,
  the existence of local maximal classical solutions to \eqref{prob:approx} can then be seen as in \cite[Lemma 2.1]{WinklerGlobalLargedataSolutions2012} and
  arguments similar to those in \cite[Lemma~2.2]{LiuLargetimeBehaviorTwodimensional2021} show that these solutions exist globally in time.
  (We also remark that \eqref{eq:intro:n=2_or_no_fluid} implies that we only need to obtain global classical solutions to the two-dimensional Navier--Stokes equation;
  for $N \ge 3$ we just set $\ue \equiv 0$ and $P_\eps \equiv 0$.)

  Finally, the strong maximum principle implies positivity of both $\neps$ and $\ce$ as well as the upper bound for $\ce$ in \eqref{eq:basic_apriori},
  while integrating the first equation in \eqref{prob:approx} and using \eqref{eq:init:def_nc} gives $\intom \neps(\cdot, t) = \intom \nne = \intom n_0$ and thus the first assertion in \eqref{eq:basic_apriori}.
\end{proof}

Similar as for instance in \cite[(2.15)]{WinklerTwodimensionalKellerSegel2016} or \cite[(2.21)]{LiuLargetimeBehaviorTwodimensional2021}, for $\eps \in (0, 1)$ we introduce the transformation 
\begin{align}\label{eq:def_w}
  \we \defs - \ln\frac{\ce}{\|c_0\|_{\leb\infty}},
\end{align}
which helps to streamline further arguments.
\begin{lemma}\label{lm:we}
  For $\eps \in (0, 1)$, the function given by \eqref{eq:def_w} fulfills
  \begin{align}\label{eq:w_eg}
    \we \ge 0, \quad
    \nabla \we = - \frac{\nabla \ce}{\ce}
    \quad \text{and} \quad
    \wet + \ue \cdot \nabla \we = \Delta \we - |\nabla \we|^2 + \neps \ce^{-1} f(\ce) 
  \end{align}
  in $\Omega \times (0, \infty)$
  and the first equation in \eqref{prob:approx} is equivalent to
  \begin{align}\label{eq:n_eq_w}
    \net + \ue \cdot \nabla \neps = \Delta \neps + \chi \nabla \cdot \left(\frac{\neps}{(1+\eps \neps)} \nabla \we\right) \qquad \text{in $\Omega \times (0, \infty)$}.
  \end{align}
\end{lemma}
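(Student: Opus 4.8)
The plan is to verify each of the four assertions by a direct pointwise computation in $\Omega \times (0, \infty)$, using the properties of the approximate solution recorded in Lemma~\ref{lm:global_ex_approx}: namely that $\ce$ is strictly positive, lies in $C^0(\Ombar \times [0, \infty)) \cap C^{2,1}(\Ombar \times (0, \infty))$, and satisfies $\ce \le \|c_0\|_{\leb\infty}$. These regularity facts guarantee that $\we$ is well defined and inherits the same smoothness, so that all manipulations below (dividing by $\ce$, differentiating $\ln \ce$, applying the product and chain rules) are legitimate.

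First, since $\ce \le \|c_0\|_{\leb\infty}$ we have $\ce / \|c_0\|_{\leb\infty} \le 1$, hence $\we = -\ln(\ce/\|c_0\|_{\leb\infty}) \ge 0$. As $\|c_0\|_{\leb\infty}$ is a positive constant, $\we = -\ln \ce + \ln \|c_0\|_{\leb\infty}$, so that $\nabla \we = -\nabla \ce / \ce$ and $\wet = -\cet/\ce$. Next I would divide the second equation in \eqref{prob:approx} by the positive function $\ce$ and insert the identities $\cet/\ce = -\wet$, $(\ue \cdot \nabla \ce)/\ce = -\ue \cdot \nabla \we$, and $\Delta \ce / \ce = \nabla \cdot (\nabla \ce/\ce) + |\nabla \ce|^2/\ce^2 = -\Delta \we + |\nabla \we|^2$; this yields $-\wet - \ue \cdot \nabla \we = -\Delta \we + |\nabla \we|^2 - \neps \ce^{-1} f(\ce)$, and multiplying through by $-1$ gives exactly the evolution equation for $\we$ claimed in \eqref{eq:w_eg}.

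Finally, for the equivalence statement, the identity $\nabla \ce / \ce = -\nabla \we$ lets me rewrite the taxis term of the first equation in \eqref{prob:approx} as $-\chi \nabla \cdot \bigl(\tfrac{\neps}{(1+\eps \neps)\ce} \nabla \ce\bigr) = \chi \nabla \cdot \bigl(\tfrac{\neps}{1+\eps \neps} \nabla \we\bigr)$, which is precisely \eqref{eq:n_eq_w}; reading this algebraic rearrangement backwards recovers the original equation, so the two formulations are equivalent. There is no genuine obstacle in this lemma: every step is an elementary manipulation. The only points that actually rely on previously established material are the nonnegativity of $\we$ (which uses the $L^\infty$ bound $\ce \le \|c_0\|_{\leb\infty}$ from \eqref{eq:basic_apriori}) and the admissibility of dividing by $\ce$ and differentiating $\ln \ce$ (which uses the strict positivity and $C^{2,1}$-regularity of $\ce$).
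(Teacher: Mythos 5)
Your proof is correct and takes essentially the same approach as the paper, which simply notes that nonnegativity of $\we$ follows from \eqref{eq:basic_apriori} and that the remaining statements follow from direct computations; you have merely spelled out those direct computations explicitly.
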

\begin{proof}
  Nonnegativity of $\we$ follows from \eqref{eq:basic_apriori} and all remaining statements become evident after direct computations.
\end{proof}

For the remainder of this section,
we fix a family of approximate initial data $(\nne, \cne, \une)_{\eps \in (0, 1)}$ and the family of corresponding global classical solutions $(\neps, \ce, \ue, P_\eps)_{\eps \in (0, 1)}$ of \eqref{prob:approx}
as given by Lemma~\ref{lm:global_ex_approx}, define $\we$ as in \eqref{eq:def_w} and set $\wne \defs \wne(\cdot, 0) = - \ln\frac{\cne}{\|c_0\|_{\leb\infty}}$ for $\eps \in (0, 1)$.
Moreover, we will make use of both statements in \eqref{eq:basic_apriori} quite often without always explicitly referring to \eqref{eq:basic_apriori}.

\subsection{A~priori estimates and convergence to solution candidates}\label{sec:apriori}
A well-known quasi-energy functional for chemotaxis systems with consumption and logarithmic sensitivity (cf.\ for instance \cite[Lemma~2.2 and Lemma~2.3]{WinklerTwodimensionalKellerSegel2016})
allows us to gain $\eps$-independent a~priori estimates going significantly beyond \eqref{eq:basic_apriori}.
\begin{lemma}\label{lm:quasi_energy}
  There exists $\newgc{quasi_energy} > 0$ such that
  \begin{align}\label{eq:quasi_energy:ddt}
        \ddt \left( - \intom \ln \neps + \chi^2 \intom \we \right)
        + \frac12 \intom \frac{|\nabla \neps|^2}{\neps^2} + \frac{\chi^2}{2} \intom |\nabla \we|^2
    \le \chi^2 \intom \neps \ce^{-1} f(\ce) 
    %\le \gc{quasi_energy}
  \end{align}
  in $(0, \infty)$ for all $\eps \in (0, 1)$ and
  \begin{align}\label{eq:quasi_energy:integrated}
        \sup_{t \in (0, T)} \intom \we(\cdot, t)
        + \intntom \frac{|\nabla \neps|^2}{\neps^2} + \intntom |\nabla \we|^2
    \le \gc{quasi_energy} (T+1)
  \end{align}
  for all $T > 0$ and all $\eps \in (0, 1)$.
\end{lemma}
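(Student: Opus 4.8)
The plan is to derive the pointwise-in-time differential inequality \eqref{eq:quasi_energy:ddt} by testing the equations in \eqref{prob:approx} with suitable functions and then to integrate it in time to obtain \eqref{eq:quasi_energy:integrated}. Concretely, I would first test the first equation in \eqref{prob:approx} (or equivalently \eqref{eq:n_eq_w}) by $-\frac{1}{\neps}$, which yields $\ddt\left(-\intom\ln\neps\right) + \intom\frac{|\nabla\neps|^2}{\neps^2} = \chi\intom\frac{\nabla\neps\cdot\nabla\we}{(1+\eps\neps)\neps}$, where the convective term $\intom\frac{\ue\cdot\nabla\neps}{\neps} = \intom\ue\cdot\nabla\ln\neps = -\intom(\nabla\cdot\ue)\ln\neps = 0$ drops out thanks to $\nabla\cdot\ue = 0$ and the Neumann boundary condition. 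Since $0 < \frac{1}{1+\eps\neps}\le 1$, Young's inequality gives the bound $\chi\intom\frac{\nabla\neps\cdot\nabla\we}{(1+\eps\neps)\neps}\le\frac12\intom\frac{|\nabla\neps|^2}{\neps^2} + \frac{\chi^2}{2}\intom|\nabla\we|^2$, which after absorption produces the $-\intom\ln\neps$ part of the functional with the expected dissipation term $\frac12\intom\frac{|\nabla\neps|^2}{\neps^2}$ plus a controllable error $\frac{\chi^2}{2}\intom|\nabla\we|^2$.

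Next, I would test the $\we$-equation from \eqref{eq:w_eg} simply by $1$, using that $\intom\Delta\we = 0$ and $\intom\ue\cdot\nabla\we = 0$ (again by $\nabla\cdot\ue = 0$ and the boundary condition), which gives $\ddt\intom\we = -\intom|\nabla\we|^2 + \intom\neps\ce^{-1}f(\ce)$. Multiplying this identity by $\chi^2$ and adding it to the inequality from the previous step, the term $+\frac{\chi^2}{2}\intom|\nabla\we|^2$ coming from Young's inequality is more than compensated by $-\chi^2\intom|\nabla\we|^2$, leaving a net dissipative contribution of $-\frac{\chi^2}{2}\intom|\nabla\we|^2$ on the left-hand side, and the source term $\chi^2\intom\neps\ce^{-1}f(\ce)$ on the right. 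This is exactly \eqref{eq:quasi_energy:ddt}. Alternatively one could test the $\we$-equation by $\we$ itself to get a stronger estimate, but the computation above suffices for the stated inequality and is cleaner.

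For \eqref{eq:quasi_energy:integrated} I would integrate \eqref{eq:quasi_energy:ddt} over $(0, t)$ for $t < T$. The main point is to bound the right-hand side $\chi^2\intnstom\neps\ce^{-1}f(\ce)$ uniformly. Here I would exploit that, since $f\in C^1([0,\infty))$ with $f(0)=0$, the function $s\mapsto \frac{f(s)}{s}$ extends continuously to $[0,\infty)$; combined with $0 < \ce\le\|c_0\|_{\leb\infty}$ from \eqref{eq:basic_apriori}, this gives $\ce^{-1}f(\ce)\le\sup_{s\in[0,\|c_0\|_{\leb\infty}]}\frac{f(s)}{s}=:\lc{fbound}<\infty$, so that $\chi^2\intom\neps\ce^{-1}f(\ce)\le\chi^2\lc{fbound}\intom\neps = \chi^2\lc{fbound}\intom n_0$, again using mass conservation from \eqref{eq:basic_apriori}. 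Integrating then yields $-\intom\ln\neps(\cdot,t) + \chi^2\intom\we(\cdot,t) + \frac12\intnstom\frac{|\nabla\neps|^2}{\neps^2} + \frac{\chi^2}{2}\intnstom|\nabla\we|^2\le -\intom\ln\nne + \chi^2\intom\wne + \chi^2\lc{fbound}\left(\intom n_0\right)t$. Since $\we\ge 0$, Jensen's inequality (or simply $-\ln s\le \text{something}$ together with mass conservation, e.g.\ $-\intom\ln\neps(\cdot,t)\ge -|\Omega|\ln\frac{\intom n_0}{|\Omega|}$ by Jensen) lets me drop or lower-bound $-\intom\ln\neps(\cdot,t)$ on the left; meanwhile $-\intom\ln\nne\to-\intom\ln n_0$ and $\intom\wne\to\intom w_0$ are bounded uniformly in $\eps$ by \eqref{eq:init_approx:n}, \eqref{eq:init_approx:c}, and the boundedness of $\wne$ from \eqref{eq:basic_apriori}. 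Taking the supremum over $t\in(0,T)$ and choosing $\gc{quasi_energy}$ to absorb all these $\eps$-independent constants then gives \eqref{eq:quasi_energy:integrated}.

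The arguments above are essentially routine; the only point requiring a little care is the justification that the testing procedures are rigorous—i.e.\ that $\ln\neps$ and $\we$ are regular enough and that $\neps$, $\ce$ stay strictly positive so that $\frac1{\neps}$ and $\ce^{-1}f(\ce)$ make sense—but this is exactly guaranteed by the smoothness and positivity of the approximate solutions asserted in Lemma~\ref{lm:global_ex_approx}. I would therefore expect no genuine obstacle here; the slight subtlety is bookkeeping the constant from Young's inequality so that the dissipation terms survive with the precise coefficients $\frac12$ and $\frac{\chi^2}{2}$ claimed in \eqref{eq:quasi_energy:ddt}, which is why using $\frac{1}{1+\eps\neps}\le 1$ and splitting the Young inequality as $\frac12 + \frac{\chi^2}{2}$ (rather than, say, evenly) is the natural choice.
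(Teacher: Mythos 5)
Your proposal is correct and follows essentially the same route as the paper: test \eqref{eq:n_eq_w} with $-\tfrac1{\neps}$, integrate the $\we$-equation over $\Omega$ (the paper's phrasing ``with $\we$'' appears to be an imprecision; the displayed identity is exactly the one you get from integrating, i.e.\ testing with $1$), combine via Young's inequality with $\tfrac{1}{1+\eps\neps}\le1$, and then integrate in time using $\ce^{-1}f(\ce)\le\|f'\|_{L^\infty((0,\|c_0\|_{\leb\infty}))}$, mass conservation, boundedness of $\intom\wne$ via $\cne\ge\infc$, and a lower bound on $-\intom\ln\neps$. The only quibbles are a sign slip in your intermediate identity for the $n$-equation (the cross term should read $-\chi\intom\frac{\nabla\neps\cdot\nabla\we}{\neps(1+\eps\neps)}$, immaterial after Young) and that you use Jensen to lower-bound $-\intom\ln\neps$ where the paper uses the simpler $\ln s\le s$; both are fine.
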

\begin{proof}
  Testing \eqref{eq:n_eq_w} with $-\frac1{\neps}$ and the last equation in \eqref{eq:w_eg} with $\we$ yields
  \begin{align*}
          - \ddt \intom \ln \neps
    &=    - \intom \frac{|\nabla \neps|^2}{\neps^2} - \chi \intom \frac{\nabla \neps \cdot \nabla \we}{\neps (1+\eps \neps)}
     \le  - \frac12 \intom \frac{|\nabla \neps|^2}{\neps^2} + \frac{\chi^2}{2} \intom |\nabla \we|^2
  \end{align*}
  and
  \begin{align*}
          \ddt \intom \we
    &=    - \intom |\nabla \we|^2 + \intom \neps \ce^{-1} f(\ce)
  \end{align*}
  in $\Omega \times (0, \infty)$ for all $\eps \in (0, 1)$, where the contribution from the fluid vanishes due to $\nabla \cdot \ue = 0$ in $\Omega \times (0, \infty)$.
  Adding suitable multiples of these estimates already gives \eqref{eq:quasi_energy:ddt}
  and since $f(s) \le s \|f'\|_{L^\infty((0, \|c_0\|_{\leb\infty}))}$ for $s \in [0, \|c_0\|_{\leb\infty}]$ by \eqref{eq:intro:cond_f},
  \eqref{eq:basic_apriori} implies that the right-hand side in \eqref{eq:quasi_energy:ddt} is bounded in $(0, \infty)$ independently of $\eps$.
  As moreover $\intom \ln \neps \le \intom \neps = \intom n_0$ in $(0, \infty)$ by \eqref{eq:basic_apriori} and $(\intom \wne)_{\eps \in (0, 1)}$ is bounded by \eqref{eq:init:def_nc},
  integrating \eqref{eq:quasi_energy:ddt} in time shows that \eqref{eq:quasi_energy:integrated} holds for some $\gc{quasi_energy} > 0$.
\end{proof}

The usual Navier--Stokes energy functional, Lemma~\ref{lm:heihoff_ineq} and Lemma~\ref{lm:quasi_energy} imply a~priori estimates also for the fluid.
\begin{lemma}\label{lm:fluid_energy}
  There exists $\newgc{fluid_n_ln_n} > 0$ such that
  \begin{align}\label{eq:fluid_energy:ddt}
    \ddt \intom |\ue(\cdot, t)|^2 + \intom |\nabla \ue|^2 \le \gc{fluid_n_ln_n} \intom \frac{|\nabla \neps|^2}{\neps^2}
    \qquad \text{for all $\eps \in (0, 1)$}.
  \end{align}
  In particular,
  \begin{align}\label{eq:fluid_energy:nabla_u_l2_bdd}
    \intntom |\nabla \ue|^2 \le \intom |u_0|^2 + 1 + \gc{quasi_energy} \gc{fluid_n_ln_n} (T+1)
    \qquad \text{for all $T > 0$ and all $\eps \in (0, 1)$},
  \end{align}
  where $\gc{quasi_energy}$ is as in Lemma~\ref{lm:quasi_energy}.
\end{lemma}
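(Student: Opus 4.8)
The plan is to test the fluid equation in \eqref{prob:approx} with $\ue$ and to estimate the resulting buoyancy term by means of the Heihoff inequalities from Lemma~\ref{lm:heihoff_ineq}; the point is that these deliver that term in a shape which can be absorbed into the dissipation and integrated in time without invoking Gronwall's lemma, so that the resulting bound grows only linearly in $T$. If $u_0 \equiv 0$ and $\phi \equiv 0$, then $\ue \equiv 0$ by Lemma~\ref{lm:global_ex_approx} and both \eqref{eq:fluid_energy:ddt} and \eqref{eq:fluid_energy:nabla_u_l2_bdd} hold trivially; in view of \eqref{eq:intro:n=2_or_no_fluid} we may hence assume $N = 2$ throughout, which is exactly the setting in which Lemma~\ref{lm:heihoff_ineq} is available.

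First I would record the standard energy identity: testing the third equation in \eqref{prob:approx} with $\ue$ and using $\nabla \cdot \ue = 0$ together with $\ue = 0$ on $\partial\Omega$ to discard the convective and pressure contributions yields
\[
  \frac12 \ddt \intom |\ue|^2 + \intom |\nabla \ue|^2 = \intom \neps\, \nabla\phi \cdot \ue
  \qquad \text{in } (0, \infty)
\]
for each $\eps \in (0, 1)$. Since $\intom \nabla\phi \cdot \ue = -\intom \phi\, (\nabla \cdot \ue) = 0$, the right-hand side equals $\intom (\neps - \ol{n_0})\,(\nabla\phi \cdot \ue)$, and to this expression I would apply \eqref{eq:heihoff_ineq:1} with $\psi \defs \neps(\cdot, t)$, $\varphi \defs (\nabla\phi \cdot \ue)(\cdot, t)$ and some $\eta > 0$ to be fixed. (A minor subtlety: $\phi \in \sob2\infty$ only gives $\varphi \in \sob1\infty$, whereas \eqref{eq:heihoff_ineq:1} is phrased for $\varphi \in \con1$; since both sides of \eqref{eq:heihoff_ineq:1} are continuous in $\varphi$ with respect to $\|\cdot\|_{\sob12}$ for fixed $\psi$, one first replaces $\phi$ by smooth approximations.) Using $\intom \neps = \intom n_0$, the pointwise bound $|\nabla(\nabla\phi \cdot \ue)|^2 \le c_1 (|\ue|^2 + |\nabla\ue|^2)$ with $c_1 = c_1(N, \|\phi\|_{\sob2\infty})$ and the Poincaré inequality $\intom |\ue|^2 \le c_2 \intom |\nabla\ue|^2$ (legitimate as $\ue(\cdot, t) \in \sobz12$), I obtain
\[
  \intom \neps\, \nabla\phi \cdot \ue
  \le \frac{\eta\, c_3}{4 \gc{heihoff}} \left( \intom n_0 \right) \intom |\nabla \ue|^2
      + \frac1\eta \intom \neps \ln\!\left( \frac{\neps}{\ol{n_0}} \right)
\]
with $c_3 \defs c_1 (1 + c_2)$, and a subsequent application of \eqref{eq:heihoff_ineq:2} bounds the last term by $\gc{heihoff}^{-1} \left( \intom n_0 \right) \intom |\nabla\neps|^2 \neps^{-2}$.

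Fixing $\eta$ so small that the coefficient of $\intom |\nabla\ue|^2$ on the right-hand side equals $\tfrac12$, this term is absorbed by the dissipation term on the left of the energy identity, and \eqref{eq:fluid_energy:ddt} follows with a suitable $\gc{fluid_n_ln_n} > 0$ (one may take $\gc{fluid_n_ln_n} = c_3 \left( \intom n_0 \right)^2 \gc{heihoff}^{-2}$). Integrating \eqref{eq:fluid_energy:ddt} over $(0, T)$, discarding the nonnegative term $\intom |\ue(\cdot, T)|^2$, and inserting $\intom |\une|^2 \le \intom |u_0|^2 + 1$ from \eqref{eq:init:def_u} as well as $\intntom |\nabla\neps|^2 \neps^{-2} \le \gc{quasi_energy}(T+1)$ from \eqref{eq:quasi_energy:integrated} then establishes \eqref{eq:fluid_energy:nabla_u_l2_bdd}. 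The conceptual core of the argument is that \eqref{eq:heihoff_ineq:1} splits the buoyancy term into an absorbable Dirichlet-energy contribution and an entropy contribution which \eqref{eq:heihoff_ineq:2} turns into precisely the space-time quantity already controlled in Lemma~\ref{lm:quasi_energy} — no smallness of $\intom n_0$ being needed; I expect the only genuinely technical nuisance to be the harmless regularity mismatch between $\nabla\phi \cdot \ue \in \sob1\infty$ and the $\con1$ hypothesis of Lemma~\ref{lm:heihoff_ineq}, which is settled by approximation.
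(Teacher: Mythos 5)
Your proposal is correct and takes essentially the same route as the paper, which simply cites \cite[Lemma~3.4]{HeihoffTwoNewFunctional2022} for the argument: test the fluid equation with $\ue$, rewrite the buoyancy term as $\intom (\neps-\ol n_0)(\nabla\phi\cdot\ue)$, absorb the Dirichlet-energy part via \eqref{eq:heihoff_ineq:1} together with Poincar\'e, convert the entropy part via \eqref{eq:heihoff_ineq:2} into $\intom |\nabla\neps|^2\neps^{-2}$, and integrate in time using \eqref{eq:init:def_u} and \eqref{eq:quasi_energy:integrated}. The regularity caveat you raise (that $\nabla\phi\cdot\ue$ is only $W^{1,\infty}$) is indeed harmless and handled exactly as you indicate, by density in $W^{1,2}$.
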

\begin{proof}
  If $\ue \equiv 0$, these estimates are trivial.
  Else, \eqref{eq:intro:n=2_or_no_fluid} implies $N=2$ so that we can apply Heihoff's inequalities \eqref{eq:heihoff_ineq:1} and \eqref{eq:heihoff_ineq:2},
  which allow us to argue as in \cite[Lemma~3.4]{HeihoffTwoNewFunctional2022} to obtain \eqref{eq:fluid_energy:ddt}.
  Thereupon, \eqref{eq:fluid_energy:nabla_u_l2_bdd} follows from an integration in time, \eqref{eq:init:def_u} and \eqref{eq:quasi_energy:integrated}.
\end{proof}

Finally, we also obtain bounds for the time derivatives.
\begin{lemma}\label{lm:time_est}
  Let $T > 0$. There is $\newgc{time_est} > 0$ such that
  \begin{align*}
      \|(\ln \neps+1)_t\|_{L^1((0, T); \dual{\sob N2})} 
    + \|(\ln \ce)_t\|_{L^1((0, T); \dual{\sob N2})} 
    + \|\uet\|_{L^1((0, T); \dual{\sob N2 \cap W_{0, \sigma}^{1,2}(\Omega)})}
    &\le \gc{time_est}.
  \end{align*}  
\end{lemma}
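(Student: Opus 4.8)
The plan is to test each of the three evolution equations with an arbitrary time-independent test function, integrate in space, and bound the resulting spatial integrals by the $\eps$-independent a~priori estimates already collected in Lemma~\ref{lm:global_ex_approx}, Lemma~\ref{lm:quasi_energy} and Lemma~\ref{lm:fluid_energy}; integrating the absolute values in time then yields the claimed $L^1((0,T); \dual{\cdots})$ bounds. Since the dual norm is computed by pairing against $\varphi$ with $\|\varphi\|_{\sob N2} \le 1$ (respectively $\|\varphi\|_{\sob N2 \cap W_{0,\sigma}^{1,2}} \le 1$), and since $\sob N2 \embed \leb\infty$ together with $\sob N2 \embed \sob1{\frac{2N}{N-1}} \embed \ldots$ controls both $\varphi$ and $\nabla \varphi$ in the Lebesgue spaces that appear, every term is of the form (an $\eps$-independent spatial quantity) times $\|\varphi\|_{\sob N2}$.

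First I would treat $c$, equivalently $\we$: from the last equation in \eqref{eq:w_eg}, for $\varphi \in C^\infty(\Ombar)$ we have $\intom \wet \varphi = -\intom \nabla \we \cdot \nabla \varphi + \intom \we (\ue \cdot \nabla \varphi) - \intom |\nabla \we|^2 \varphi + \intom \neps \ce^{-1} f(\ce) \varphi$, and since $(\ln \ce)_t = -\wet$, it suffices to bound this. The gradient term is controlled by $\|\nabla \we\|_{\leb2}$, which is in $L^2((0,T))$ by \eqref{eq:quasi_energy:integrated}, hence in $L^1$; the convective term by $\|\we\|_{\leb2}\|\ue\|_{\leb2}\|\nabla\varphi\|_{\leb\infty}$, using the $L^\infty((0,T);\leb1)$ bound on $\we$ from \eqref{eq:quasi_energy:integrated} (together with nonnegativity of $\we$, upgrading $L^1$ to $L^2$ via $\we \le $ its own $L^\infty$-in-space bound if needed, or more simply noting $\we \in L^\infty((0,T);\leb2)$ since $\we$ is bounded in $L^2$ by Lemma~\ref{lm:quasi_energy}) and the $L^\infty((0,T);\leb2)$ bound on $\ue$ from \eqref{eq:fluid_energy:nabla_u_l2_bdd}; the term $\intom |\nabla\we|^2\varphi$ by $\|\nabla\we\|_{\leb2}^2 \|\varphi\|_{\leb\infty}$, which is in $L^1((0,T))$ again by \eqref{eq:quasi_energy:integrated}; and $\intom \neps \ce^{-1} f(\ce)\varphi$ by $\|\varphi\|_{\leb\infty}$ times $\intom \neps \ce^{-1}f(\ce)$, which as observed in the proof of Lemma~\ref{lm:quasi_energy} is bounded uniformly in $(0,\infty)$ because $f(s) \le s\|f'\|_{L^\infty((0,\|c_0\|_{\leb\infty}))}$ and $\intom \neps = \intom n_0$. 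For $u$, testing the third equation in \eqref{prob:approx} against solenoidal $\varphi$ gives $\intom \uet \cdot \varphi = -\intom \nabla\ue \cdot \nabla\varphi + \intom (\ue\otimes\ue)\cdot\nabla\varphi + \intom \neps \nabla\phi\cdot\varphi$; the first term is bounded by $\|\nabla\ue\|_{\leb2}\|\nabla\varphi\|_{\leb2} \in L^2((0,T)) \subset L^1((0,T))$ by \eqref{eq:fluid_energy:nabla_u_l2_bdd}, the second by $\|\ue\|_{\leb2}^2\|\nabla\varphi\|_{\leb\infty}$, which lies in $L^1((0,T))$ since $\ue \in L^\infty((0,T);\leb2)$, and the third by $\|\phi\|_{\sob1\infty}\|\neps\|_{\leb1}\|\varphi\|_{\leb\infty} = \|\phi\|_{\sob1\infty}\|n_0\|_{\leb1}\|\varphi\|_{\leb\infty}$, a constant. (When $\ue \equiv 0$ this term vanishes trivially.)

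The component requiring a little more care is $\ln\neps$. Testing \eqref{eq:n_eq_w} with $\frac{\varphi}{\neps+1}$, as in the supersolution identity \eqref{eq:sol_concept:n_ln_supersol}, and using $(\ln(\neps+1))_t = \frac{\net}{\neps+1}$, one obtains $\intom (\ln(\neps+1))_t \varphi$ as a combination of $\intom \frac{|\nabla\neps|^2}{(\neps+1)^2}\varphi$, $\intom \frac{\nabla\neps \cdot \nabla\varphi}{\neps+1}$, $\chi\intom \frac{\neps\, \nabla\neps\cdot\nabla\we}{(1+\eps\neps)(\neps+1)^2}\varphi$, $\chi\intom \frac{\neps\,\nabla\we\cdot\nabla\varphi}{(1+\eps\neps)(\neps+1)}$ and $\intom \ln(\neps+1)(\ue\cdot\nabla\varphi)$. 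The key point is that $\frac{|\nabla\neps|}{\neps+1} \le \frac{|\nabla\neps|}{\neps}$, so all three diffusion/taxis terms are estimated by $\|\varphi\|_{\leb\infty}$ or $\|\nabla\varphi\|_{\leb2}$ times products of $\|\frac{\nabla\neps}{\neps}\|_{\leb2}$ and $\|\nabla\we\|_{\leb2}$ — each of which is in $L^2((0,T))$ by \eqref{eq:quasi_energy:integrated}, so their squares and pairwise products are in $L^1((0,T))$ by Cauchy--Schwarz; the factor $\frac{\neps}{(1+\eps\neps)(\neps+1)} \le 1$ is harmless. Finally $\intom \ln(\neps+1)(\ue\cdot\nabla\varphi)$ is bounded by $\|\nabla\varphi\|_{\leb\infty}\|\ue\|_{\leb2}\|\ln(\neps+1)\|_{\leb2}$, and $\ln(\neps+1) \le \neps$ gives $\|\ln(\neps+1)\|_{\leb2}^2 \le \intom \neps \ln(\neps+1) \le$ (something controlled) — more simply, $0 \le \ln(\neps+1) \le \neps$ and $\neps \ln(\neps+1) \le \neps^2$ is not obviously bounded, so here I would instead use $\ln(\neps+1) \le (\neps)^{1/2}$-type bounds, or note that it suffices to pair against $\varphi$ with $\|\varphi\|_{\sob N2}$ small and use $\|\ln(\neps+1)\|_{\leb1} \le \|n_0\|_{\leb1}$ together with the embedding $\sob N2 \embed W^{1,\infty}$ only in $N$ low enough — cleanest is $\intom \ln(\neps+1)|\ue \cdot \nabla\varphi| \le \|\nabla\varphi\|_{\leb\infty}\|\ln(\neps+1)\|_{\leb1}^{1/2}\|\ln(\neps+1)\|_{\leb\infty}^{1/2}\|\ue\|_{\leb2}$ is not available either since $\ln(\neps+1)$ need not be bounded; the honest route is $\|\ln(\neps+1)\|_{\leb2} \le \|\ln(\neps+1)\|_{\leb1}^{1/2}\|\neps\|_{\leb\infty}^{1/2}$, which also fails. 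The clean fix, which I would adopt, is to observe that this convective term only ever needs to be paired in $L^1((0,T))$: since $\ue \in L^2((0,T);\leb2)$ (indeed in $L^\infty$) and $\ln(\neps+1) \le \neps$ with $\neps$ bounded in $L^\infty((0,T);\leb1)$, Hölder in the form $\intom \ln(\neps+1)|\ue||\nabla\varphi| \le \|\nabla\varphi\|_{\leb\infty}\|\ue\|_{\leb\infty}\intom \ln(\neps+1)$ together with $\sob N2 \embed \leb\infty$ (so $\|\ue\|_{\leb\infty}$ is not the issue — $\ue$, not $\varphi$), no: $\ue$ is not bounded. I therefore estimate $\intom \ln(\neps+1)|\ue \cdot \nabla \varphi| \le \|\nabla\varphi\|_{L^{2N}}\|\ue\|_{\leb2}\|\ln(\neps+1)\|_{L^{\frac{2N}{N-2}}}$ for $N\ge 3$ (and an analogous $L^\infty$-type split for $N=2$), and control $\|\ln(\neps+1)\|_{L^{\frac{2N}{N-2}}}$ by Gagliardo--Nirenberg from $\|\nabla \ln(\neps+1)\|_{\leb2} \le \|\frac{\nabla\neps}{\neps}\|_{\leb2}$ and $\|\ln(\neps+1)\|_{\leb1} \le \|n_0\|_{\leb1}$; the resulting bound is a product of an $L^2((0,T))$-function (the gradient), an $L^\infty((0,T))$-function ($\ue$) and constants, hence lies in $L^1((0,T))$. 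Collecting all estimates, integrating in $t$ over $(0,T)$, and taking the supremum over $\|\varphi\|_{\sob N2} \le 1$ (using the Sobolev embeddings $\sob N2 \embed \leb\infty \cap W^{1,r}$ for the relevant $r$) produces the constant $\gc{time_est}$. The main obstacle, as indicated, is precisely this last convective term $\intom \ln(\neps+1)(\ue\cdot\nabla\varphi)$: one must spend a Gagliardo--Nirenberg interpolation to convert the weighted gradient bound on $\neps$ into an $L^p$ bound on $\ln(\neps+1)$ with $p$ large enough to absorb $\ue \in \leb2$ while pairing against $\nabla\varphi$, and verify that the exponents close — everything else is a direct application of Cauchy--Schwarz against the two uniform $L^2((0,T);\leb2)$-gradient bounds.
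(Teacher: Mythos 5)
Your overall plan — test each equation against a fixed $\varphi$, exploit the a~priori estimates from Lemmas~\ref{lm:quasi_energy} and~\ref{lm:fluid_energy}, then integrate in time — is the right one and is what the paper does. Most of your term-by-term bounds are correct. But you create a genuine and avoidable difficulty by integrating by parts on the fluid transport terms, converting them into $\intom \we\,(\ue\cdot\nabla\varphi)$ and $\intom \ln(\neps+1)\,(\ue\cdot\nabla\varphi)$. The cleaner route, which the paper takes, is to leave the derivative on the solution: writing the transport contribution in the $n$-equation as $-\intom (\ue\cdot\nabla\ln(\neps+1))\,\varphi$ and in the $c$/$w$-equation as $-\intom (\ue\cdot\nabla\we)\,\varphi$, one estimates directly by $\|\varphi\|_{\leb\infty}\,\|\ue\|_{\leb2}\,\|\nabla\ln(\neps+1)\|_{\leb2}$ (resp.\ $\|\varphi\|_{\leb\infty}\,\|\ue\|_{\leb2}\,\|\nabla\we\|_{\leb2}$), and Young's inequality then puts everything into the shape $\lc{factor}\bigl(1 + \intom\frac{|\nabla\neps|^2}{(\neps+1)^2} + \intom|\nabla\we|^2 + \intom|\ue|^2\bigr)(\|\varphi\|_{\leb\infty}+\|\nabla\varphi\|_{\leb2})$, which is $L^1((0,T))$ by \eqref{eq:quasi_energy:integrated} and \eqref{eq:fluid_energy:nabla_u_l2_bdd}. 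No interpolation of $\ln(\neps+1)$ or $\we$ in any $L^p$ space is then needed at all.

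Two further concrete issues with your version. First, your statement that $\we\in L^\infty((0,T);\leb2)$ ``since $\we$ is bounded in $L^2$ by Lemma~\ref{lm:quasi_energy}'' is incorrect: \eqref{eq:quasi_energy:integrated} only gives $\sup_t\intom\we$, i.e.\ an $L^\infty$-in-time $L^1$-in-space bound, and there is no a~priori $L^\infty$-in-space bound on $\we$ either (since $\ce$ may become arbitrarily small). Your Gagliardo--Nirenberg fallback would salvage this in $N=2$ (the only case with $\ue\not\equiv0$), but it is an extra step that the non-integrated-by-parts form makes superfluous. Second, your Gagliardo--Nirenberg treatment of $\intom\ln(\neps+1)(\ue\cdot\nabla\varphi)$, though it can be made to close for $N\ge 3$, is irrelevant there because $\ue\equiv 0$ when $N\ge3$ under \eqref{eq:intro:n=2_or_no_fluid}, and the $N=2$ endpoint you flag as ``an analogous $L^\infty$-type split'' is left vague. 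Replacing this entire discussion by the direct estimate of $-\intom(\ue\cdot\nabla\ln(\neps+1))\varphi$ removes the obstruction you correctly identified as the main difficulty.
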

\begin{proof}
  Straightforward testing procedures yield $\newlc{factor} > 0$ such that
  \begin{align*}
          \left| \intom (\ln(\neps+1))_t \varphi \right|
    &\le  \lc{factor} \left(1 + \intom \frac{|\nabla \neps|^2}{(\neps+1)^2} + \intom |\nabla \we|^2 + \intom |\ue|^2 \right)
          \left(\|\varphi\|_{\leb\infty} + \|\nabla \varphi\|_{\leb2} \right), \\
          \left| \intom (\ln \ce)_t \varphi \right|
    &\le  \lc{factor} \left(1 + \intom |\nabla \we|^2 + \intom |\ue|^2 + \intom \neps \right)
          \left(\|\varphi\|_{\leb\infty} + \|\nabla \varphi\|_{\leb2} \right) \quad \text{and} \\
          \left| \intom \uet \psi \right|
    &\le  \lc{factor} \left(1 + \intom |\ue|^2 + \intom |\nabla \ue|^2 + \intom \neps \right)
          \left(\|\psi\|_{\leb\infty} + \|\nabla \psi\|_{\leb2} \right)
  \end{align*}
  in $(0, \infty)$ 
  for all $\varphi \in C^\infty(\Ombar)$, $\psi \in C_c^\infty(\Omega; \R^N)$ with $\nabla \cdot \psi = 0$ and all $\eps \in (0, 1)$;
  for details we refer to \cite[Lemma~2.4]{WangGlobalLargedataGeneralized2016} and \cite[Lemma~2.12]{WinklerSmallmassSolutionsTwodimensional2020}.
  Since $\sob N2 \embed \leb\infty$, we thus obtain the desired estimates
  by integrating in time and applying Lemma~\ref{lm:quasi_energy}, Lemma~\ref{lm:fluid_energy} and the Poincar\'e inequality.
\end{proof}

%\subsection{Convergence to solution candidates}
The a priori estimates obtained in the lemmata above make several compactness results applicable, allowing us to construct solution candidates as limits in appropriate spaces of the solutions to \eqref{prob:approx}.
\begin{lemma}\label{lm:eps_sea_0}
  There exist a triple of functions
  \begin{align*}
    (n, c, u) \in L^\infty((0, \infty); \leb1) \times L^2_{\loc}([0, \infty); \sob12) \times L^2_{\loc}([0, \infty); W_{0, \sigma}^{1, 2}(\Omega))
  \end{align*}
  with $n \ge 0$ and $0 \le c \le \|c_0\|_{\leb\infty}$ a.e.\ in $\Omega \times (0, \infty)$
  and a null sequence $(\eps_j)_{j \in \N} \subset(0, 1)$ such that
  \begin{alignat}{2}
    \neps                              & \ra n               && \qquad \text{a.e.\ in $\Omega \times (0, \infty)$}, \label{eq:eps_sea_0:n_pw} \\
    \neps(\cdot, t)                    & \ra n(\cdot, t)     && \qquad \text{a.e.\ in $\Omega$ for a.e.\ $t \in (0, \infty)$}, \label{eq:eps_sea_0:n_t_pw} \\
    \ln(\neps+1)                       & \ra \ln(n+1)        && \qquad \text{in $L_{\loc}^2(\Ombarinf)$ and a.e.\ in $\Omega \times (0, \infty)$}, \label{eq:eps_sea_0:ln_n_l2} \\
    \nabla \ln(\neps+1)                & \rh \nabla \ln(n+1) && \qquad \text{in $L_{\loc}^2(\Ombarinf)$}, \label{eq:eps_sea_0:nabla_ln_n_l2} \\
    \ce                                & \ra c               && \qquad \text{in $L_{\loc}^2(\Ombarinf)$ and a.e.\ in $\Omega \times (0, \infty)$}, \label{eq:eps_sea_0:c_l2} \\
    \nabla \ce                         & \rh \nabla c        && \qquad \text{in $L_{\loc}^2(\Ombarinf)$}, \label{eq:eps_sea_0:nabla_c_l2} \\
    \ln \ce                            & \ra \ln c           && \qquad \text{in $L_{\loc}^2(\Ombarinf)$ and a.e.\ in $\Omega \times (0, \infty)$}, \label{eq:eps_sea_0:ln_c_l2} \\
    \nabla \ln \ce                     & \rh \nabla \ln c    && \qquad \text{in $L_{\loc}^2(\Ombarinf)$}, \label{eq:eps_sea_0:nabla_ln_c_l2} \\
    \ue                                & \ra u               && \qquad \text{in $L_{\loc}^2(\Ombarinf)$ and a.e.\ in $\Omega \times (0, \infty)$}, \label{eq:eps_sea_0:u_l2} \\
    \nabla \ue                         & \rh \nabla u        && \qquad \text{in $L_{\loc}^2(\Ombarinf)$} \label{eq:eps_sea_0:nabla_u_l2}
  \intertext{and, if $N = 2$ also}
    \neps                              & \ra n               && \qquad \text{in $L_{\loc}^1(\Ombarinf)$} \label{eq:eps_sea_0:n_l1}
  \end{alignat}
  as $\eps = \eps_j \sea 0$.
  Furthermore, \eqref{eq:sol_concept:mass_ineq} and \eqref{eq:sol_concept:u_weak_sol} hold.
\end{lemma}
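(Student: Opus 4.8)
The plan is to obtain $(n,c,u)$ as a limit of the approximate solutions $(\neps,\ce,\ue)$ along a null sequence $\eps_j\sea0$, relying on the $\eps$-independent bounds collected in Lemma~\ref{lm:global_ex_approx}, Lemma~\ref{lm:quasi_energy}, Lemma~\ref{lm:fluid_energy} and Lemma~\ref{lm:time_est} together with Aubin--Lions--Simon type compactness. First I would turn these estimates into bounds in the appropriate parabolic spaces: \eqref{eq:basic_apriori} yields boundedness of $(\neps)$ in $L^\infty((0,\infty);\leb1)$ and $0<\ce\le\|c_0\|_{\leb\infty}$; since $\we\ge0$ with $\sup_{t}\intom\we$ and $\intntom|\nabla\we|^2$ bounded by \eqref{eq:quasi_energy:integrated}, the Poincar\'e inequality shows that $(\we)$, and hence---because $\nabla\ln\ce=-\nabla\we$, $\ln\ce\le\ln\|c_0\|_{\leb\infty}$ and $|\nabla\ce|\le\|c_0\|_{\leb\infty}|\nabla\we|$---also $(\ln\ce)$ and $(\ce)$, are bounded in $L^2_{\loc}([0,\infty);\sob12)$; similarly $\ln(\neps+1)\ge0$, $\intom\ln(\neps+1)\le\intom n_0$ and $\frac{|\nabla\neps|^2}{(\neps+1)^2}\le\frac{|\nabla\neps|^2}{\neps^2}$ give a bound for $(\ln(\neps+1))$ in $L^2_{\loc}([0,\infty);\sob12)$; and Lemma~\ref{lm:fluid_energy} with \eqref{eq:init:def_u} bounds $(\ue)$ in $L^2_{\loc}([0,\infty);W_{0,\sigma}^{1,2}(\Omega))\cap L^\infty_{\loc}([0,\infty);\leb2)$. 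Lemma~\ref{lm:time_est} moreover provides the matching time-derivative bounds in $L^1((0,T);\dual{\sob N2})$-type spaces.

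Next I would carry out the compactness step. On each cylinder $\OmT$, the embedding $\sob12\embed\embed\leb2\embed\dual{\sob N2}$ together with the Aubin--Lions--Simon lemma makes $(\ln(\neps+1))$, $(\ln\ce)$ and $(\ue)$ relatively compact in $L^2(\OmT)$; exhausting $(0,\infty)$ by such cylinders, passing to a diagonal subsequence, and additionally extracting weakly convergent subsequences for the bounded gradients $\nabla\ln(\neps+1)$, $\nabla\ce$, $\nabla\ln\ce$ and $\nabla\ue$ yields a null sequence $(\eps_j)$ along which $\ln(\neps+1)$, $\ln\ce$ and $\ue$ converge, strongly in $L^2_{\loc}(\Ombarinf)$ and a.e.\ in $\Omega\times(0,\infty)$, to limits $z$, $\zeta$ and $u$, and along which \eqref{eq:eps_sea_0:nabla_ln_n_l2}, \eqref{eq:eps_sea_0:nabla_c_l2}, \eqref{eq:eps_sea_0:nabla_ln_c_l2} and \eqref{eq:eps_sea_0:nabla_u_l2} hold. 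Setting $n\defs\exp(z)-1$ and $c\defs\exp(\zeta)$, the a.e.\ finiteness of $z,\zeta\in L^2_{\loc}$ and continuity of the exponential give $\neps\to n\ge0$ and $\ce\to c$ a.e.\ in $\Omega\times(0,\infty)$ with $0<c\le\|c_0\|_{\leb\infty}$ (strict positivity being exactly $\zeta>-\infty$ a.e.), which---using $\ce\le\|c_0\|_{\leb\infty}$ for dominated convergence of $(\ce)$ in $L^2_{\loc}$, the $L^2_{\loc}$-convergence of $\ln(\neps+1)$ already obtained, and $z=\ln(n+1)$, $\zeta=\ln c$---yields \eqref{eq:eps_sea_0:n_pw}--\eqref{eq:eps_sea_0:nabla_ln_c_l2} and \eqref{eq:eps_sea_0:u_l2}--\eqref{eq:eps_sea_0:nabla_u_l2}, while \eqref{eq:eps_sea_0:n_t_pw} follows from \eqref{eq:eps_sea_0:n_pw} via Fubini. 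Each weak $L^2_{\loc}$-limit of a gradient is identified with the distributional gradient of the corresponding strong limit, so that $\ln(n+1),\ln c,c\in L^2_{\loc}([0,\infty);\sob12)$, $u\in L^2_{\loc}([0,\infty);W_{0,\sigma}^{1,2}(\Omega))$, $n\ge0$, $0\le c\le\|c_0\|_{\leb\infty}$, $\nabla\cdot u=0$ a.e., and $n\in L^\infty((0,\infty);\leb1)$.

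It then remains to verify the two explicit assertions and \eqref{eq:eps_sea_0:n_l1}. For the mass inequality \eqref{eq:sol_concept:mass_ineq}, I would combine \eqref{eq:eps_sea_0:n_t_pw}, Fatou's lemma and \eqref{eq:basic_apriori} to get $\intom n(\cdot,t)\le\liminf_{j\to\infty}\intom\neps(\cdot,t)=\intom n_0$ for a.e.\ $t$. For \eqref{eq:eps_sea_0:n_l1} in the planar case, I would establish that $(\neps)$ is equi-integrable over $\Omega\times(0,T)$ for every $T>0$---a by-now standard two-dimensional consequence of \eqref{eq:quasi_energy:integrated} in combination with the functional inequalities of Section~\ref{sec:prelims} (cf.\ the arguments in \cite{WinklerTwodimensionalKellerSegel2016} and \cite{WangGlobalLargedataGeneralized2016})---whence Vitali's theorem upgrades \eqref{eq:eps_sea_0:n_pw} to $L^1_{\loc}$-convergence. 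Finally, since the third equation in \eqref{prob:approx} coincides with the fluid equation in \eqref{prob:fluid_incl}, testing it against a solenoidal $\varphi\in C_c^\infty(\Omega\times[0,\infty);\R^N)$ and integrating by parts gives \eqref{eq:sol_concept:u_weak_sol} with $(\neps,\ue,\une)$ in place of $(n,u,u_0)$; one passes to the limit using \eqref{eq:eps_sea_0:u_l2} (for the terms with $u$, and, via $\ue\otimes\ue\to u\otimes u$ in $L^1_{\loc}$, for the convective term), \eqref{eq:eps_sea_0:nabla_u_l2} (for the dissipation), \eqref{eq:init_approx:u} (for the initial datum) and, when $N=2$, \eqref{eq:eps_sea_0:n_l1} together with $\nabla\phi\in L^\infty$ (for the buoyancy term); when $N\ge3$ the constraint \eqref{eq:intro:n=2_or_no_fluid} forces $u_0\equiv0$ and $\phi\equiv0$, so that $u\equiv0$ satisfies \eqref{eq:sol_concept:u_weak_sol} trivially.

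The routine part is the compactness machinery; the genuinely delicate points are ensuring that all the limits match up consistently---in particular the strict positivity $c>0$ a.e., equivalently $\we$ remaining finite a.e.---and, above all, the two-dimensional equi-integrability of $(\neps)$ underlying \eqref{eq:eps_sea_0:n_l1} (and hence the well-definedness of the buoyancy term in the limit), which is precisely the estimate that is unavailable for $N\ge3$ and is the reason a weak $\ln$-supersolution formulation for $n$ is forced upon us in that regime.
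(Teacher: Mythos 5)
Your proposal is correct and mirrors the paper's own proof essentially step for step: the same $\eps$-independent bounds from Lemmas~\ref{lm:global_ex_approx}--\ref{lm:time_est}, the same Aubin--Lions--Simon compactness applied to $\ln(\neps+1)$, $\ln\ce$ and $\ue$ with a diagonal extraction, the same identification $n=e^{z}-1$, $c=e^{\zeta}$, the same Fatou argument for \eqref{eq:sol_concept:mass_ineq}, and the same use of uniform integrability (via the two-dimensional functional inequality \eqref{eq:heihoff_ineq:2} combined with \eqref{eq:quasi_energy:integrated}, de la Vall\'ee Poussin and Vitali) to obtain \eqref{eq:eps_sea_0:n_l1} and pass to the limit in the fluid equation. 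No essential difference in approach or any gap.
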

\begin{proof}
  For each $T > 0$, \eqref{eq:quasi_energy:integrated} and \eqref{eq:fluid_energy:nabla_u_l2_bdd} entail that the families $(\ln (\neps+1))_{\eps \in (0, 1)}, (\ln \ce)_{\eps \in (0, 1)}$ and $(\ue)_{\eps \in (0, 1)}$
  are bounded in $L^2((0, T); \sob12)$, $L^2((0, T); \sob12)$ and $L^2((0, T); W_{0, \sigma}^{1,2}(\Omega))$, respectively.
  As also their time derivatives are bounded in appropriate spaces by Lemma~\ref{lm:time_est},
  three applications of the Aubin--Lions lemma, multiple diagonalization arguments and the fact that convergence in $L^2$ implies a.e.\ convergence along a subsequence
  yield a null sequence $(\eps_j)_{j \in \N}$ and $(z_1, z_2, u) \in (L^2_{\loc}([0, \infty); \sob12))^2 \times L^2_{\loc}([0, \infty); W_{0, \sigma}^{1, 2}(\Omega))$
  such that \eqref{eq:eps_sea_0:ln_n_l2}, \eqref{eq:eps_sea_0:nabla_ln_n_l2} and \eqref{eq:eps_sea_0:ln_c_l2}--\eqref{eq:eps_sea_0:nabla_u_l2} hold with $(\ln(n+1), \ln c)$ replaced by $(z_1, z_2)$.
  Setting $n \defs \ure^{z_1}-1$ and $c \defs \ure^{z_2}$,
  we further obtain \eqref{eq:eps_sea_0:n_pw}, \eqref{eq:eps_sea_0:ln_n_l2}, \eqref{eq:eps_sea_0:nabla_ln_n_l2}, \eqref{eq:eps_sea_0:ln_c_l2} as well as \eqref{eq:eps_sea_0:nabla_ln_c_l2}
  and due to \eqref{eq:basic_apriori} and \eqref{eq:quasi_energy:integrated} also \eqref{eq:eps_sea_0:c_l2} and \eqref{eq:eps_sea_0:nabla_c_l2},
  while \eqref{eq:eps_sea_0:n_t_pw} follows from \eqref{eq:eps_sea_0:n_pw}.
  The upper and lower bound for $n$ and $c$ are then direct consequences of \eqref{eq:eps_sea_0:n_pw} and \eqref{eq:eps_sea_0:c_l2}
  and the corresponding bounds for the approximate solutions provided by Lemma~\ref{lm:global_ex_approx}.

  If $N=2$, the estimates \eqref{eq:quasi_energy:integrated} and \eqref{eq:heihoff_ineq:2} together with the de la Vall\'ee Poussin theorem
  assert that $(\neps)_{\eps \in (0, 1)}$ is uniformly integrable in $\Omega \times (0, T)$ for all $T > 0$,
  so that after switching to another subsequence Vitali's theorem gives \eqref{eq:eps_sea_0:n_l1}.

  Moreover, by Fatou's lemma, \eqref{eq:eps_sea_0:n_t_pw} and the first statement in \eqref{eq:basic_apriori} we have
  \begin{align*}
    \intom n(\cdot, t) \le \liminf_{j \to \infty} \intom n_{\eps_j}(\cdot, t) = \intom n_0
    \qquad \text{for a.e.\ $t \in (0, \infty)$},
  \end{align*}
  which implies \eqref{eq:sol_concept:mass_ineq} and that $n$ belongs to $L^\infty((0, \infty); \leb1)$.
  Finally, the weak formulation \eqref{eq:sol_concept:u_weak_sol} for the fluid equation follows from taking the limit in each term of the corresponding equations for the approximate problems,
  which is possible due to \eqref{eq:init_approx:u}, \eqref{eq:eps_sea_0:u_l2}, \eqref{eq:eps_sea_0:nabla_u_l2}, \eqref{eq:eps_sea_0:n_l1} and \eqref{eq:intro:n=2_or_no_fluid}.
\end{proof}

\subsection{Strong convergence of \tops{$\neps f(\ce)$}{n\_eps f(c\_eps)} and \tops{$\nabla \we$}{grad w\_eps}}\label{sec:strong_conv}
The convergence properties asserted by Lemma~\ref{lm:eps_sea_0} are yet insufficient
to take the limit in the corresponding versions of both \eqref{eq:sol_concept:n_ln_supersol} and \eqref{eq:sol_concept:v_weak_sol} for the approximate solutions.
Indeed, the latter one needs weak convergence of $\neps f(\ce)$ in $L_{\loc}^1(\Ombarinf)$ 
while for the former the critical term is $-\chi \intninfom \frac{n \nabla n \cdot \nabla c}{(n+1)^2 c} \varphi$,
which requires convergence of the product of two gradient terms and thus (for instance) \emph{strong} convergence of $\nabla \we$.

As discussed in \cite{FuestStrongConvergenceWeighted2022} (and already mentioned in the introduction), these issues are related:
At least in the fluid-free case, weak convergence of the source term in heat equations implies strong convergence of certain weighted gradients of the corresponding solution.

This leaves the question how to obtain sufficiently strong a~priori estimates allowing us to conclude convergence of $\neps f(\ce)$ in $L_{\loc}^1(\Ombarinf)$.
Here we follow an idea recently developed in \cite[Proposition~4.5]{HombergEtAlExistenceGeneralizedSolutions2022}: to consider the functional $\intom (\ln \neps) \ce$.
When calculating its time derivative, only a single term with favourable sign appears (stemming from the consumption term in the second equation in \eqref{prob:approx})
but all remaining terms (in particular those involving gradients) fortunately are already known to be bounded thanks to \eqref{eq:quasi_energy:integrated}.
\begin{lemma}\label{lm:nc_uniform_int}
  Let $T > 0$. Then there is $\newgc{nc_uniform_int} > 0$ such that
  \begin{align}\label{eq:nc_uniform_int:statement}
    \intntom \neps f(\ce) |\ln (\neps f(\ce))| \le \gc{nc_uniform_int}
    \qquad \text{for all $\eps \in (0, 1)$}.
  \end{align}
\end{lemma}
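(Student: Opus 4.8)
The plan is to establish the uniform bound by controlling the time evolution of the functional $t \mapsto \intom (\ln \neps(\cdot, t)) \ce(\cdot, t)$, exactly along the lines suggested by the text preceding the statement. The key structural fact is that when this functional is differentiated, the consumption term $-\neps f(\ce)$ in the second equation of \eqref{prob:approx} produces a contribution of the form $-\intom (\ln \neps) \neps f(\ce)$, and after splitting $\ln \neps$ into its positive and negative parts this is what will eventually deliver control over $\neps f(\ce) |\ln(\neps f(\ce))|$. All the remaining terms arising in $\ddt \intom (\ln \neps) \ce$ will turn out to be controllable by the a~priori bounds already available, principally \eqref{eq:quasi_energy:integrated}, together with $\ce \le \|c_0\|_{\leb\infty}$ and $\intom \neps = \intom n_0$ from \eqref{eq:basic_apriori}.

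\textbf{Key steps.} First I would compute $\ddt \intom (\ln \neps) \ce$ using \eqref{eq:n_eq_w} (or equivalently the first equation in \eqref{prob:approx}) tested in the appropriate way and the second equation in \eqref{prob:approx}; the fluid convection terms cancel because $\nabla \cdot \ue = 0$. This produces, schematically, the good term $-\intom (\ln \neps) \neps f(\ce)$ plus terms involving $\intom \ce \frac{|\nabla \neps|^2}{\neps^2}$ (wrong sign, but with the factor $\ce$ bounded, absorbable after an integration by parts into $\intom \frac{|\nabla \neps|^2}{\neps^2}$ via Young's inequality and the gradient terms $\intom |\nabla \we|^2$), cross terms of the form $\intom \ce \frac{\nabla \neps \cdot \nabla \we}{\neps}$ and $\intom \nabla(\ln\neps)\cdot\nabla\ce = -\intom \frac{\nabla\neps\cdot\nabla\ce}{\neps}$, and a term $\intom (\ln \neps)\Delta\ce$ handled by integration by parts. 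Crucially all of these are estimated by a constant multiple of $1 + \intom \frac{|\nabla \neps|^2}{\neps^2} + \intom |\nabla \we|^2$ (using $\ce\le\|c_0\|_{\leb\infty}$, $|\nabla\ce| = \ce|\nabla\we|$, and Young's inequality), whose time integral over $(0,T)$ is bounded by $\gc{quasi_energy}(T+1)$. Second, I would integrate in time over $(0,T)$: since $\intom (\ln\neps)\ce \le \|c_0\|_{\leb\infty}\intom\neps = \|c_0\|_{\leb\infty}\intom n_0$ is bounded above, and $\intom(\ln\nne)\cne$ is bounded below by $-\|c_0\|_{\leb\infty}\int_{\{\nne<1\}}|\ln\nne|$, which converges thanks to \eqref{eq:init:def_nc}/\eqref{eq:init_approx:n}, this yields $\intnt\intom (\ln\neps)\neps f(\ce) \le \lc{factor}(T+1)$ for some $\eps$-independent constant. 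Third, splitting $(\ln\neps) = (\ln\neps)_+ - (\ln\neps)_-$, the positive part gives $\intnt\intom (\ln\neps)_+ \neps f(\ce) \le \lc{factor}(T+1) + \intnt\intom (\ln\neps)_- \neps f(\ce)$, and the last integral is bounded since $(\ln\neps)_-\neps \le \ure^{-1}$ pointwise and $f(\ce) \le \|c_0\|_{\leb\infty}\|f'\|_{L^\infty((0,\|c_0\|_{\leb\infty}))}$. Fourth, I would convert the resulting bound on $\intnt\intom (\ln\neps)_+\neps f(\ce)$ into the claimed bound on $\intnt\intom \neps f(\ce)|\ln(\neps f(\ce))|$: writing $\ln(\neps f(\ce)) = \ln\neps + \ln f(\ce)$ with $f(\ce) \le$ const so $\ln f(\ce)$ bounded above, and using $|\ln\neps| \le (\ln\neps)_+ + (\ln\neps)_-$ together with the elementary bound $\neps f(\ce)|\ln(\neps f(\ce))| \le C\neps f(\ce)(\ln\neps)_+ + C\neps f(\ce)|\ln f(\ce)| + C\neps f(\ce)(\ln\neps)_- + C$ on the region where $\neps f(\ce) \le 1$ and a direct estimate where $\neps f(\ce) > 1$; each piece is now bounded, using again $\neps f(\ce)(\ln\neps)_- \le (\ln\neps)_-\neps\cdot\|f\|_\infty \le$ const and $\neps f(\ce)|\ln f(\ce)| \le \neps\cdot\|f(\cdot)|\ln f(\cdot)|\|_{L^\infty((0,\|c_0\|_{\leb\infty}))}$.

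\textbf{Main obstacle.} The delicate point is the first step: verifying that \emph{every} term in $\ddt \intom (\ln\neps)\ce$ other than the good consumption term can genuinely be dominated by $1 + \intom\frac{|\nabla\neps|^2}{\neps^2} + \intom|\nabla\we|^2$, in particular handling the term with the ``wrong'' sign coming from the chemotactic flux (which after integration by parts looks like $+\chi\intom \frac{\neps}{1+\eps\neps}\nabla\we\cdot\nabla(\ln\neps)\,\ce$-type expressions, i.e. $\chi\intom\frac{\ce}{1+\eps\neps}\nabla\we\cdot\nabla\neps/\neps$) and the diffusion cross term $-\intom\frac{\nabla\neps\cdot\nabla\ce}{\neps}$; both are quadratic in gradients but, thanks to the pointwise bound on $\ce$ and $|\nabla\ce| = \ce|\nabla\we| \le \|c_0\|_{\leb\infty}|\nabla\we|$, Young's inequality splits them into a small multiple of $\intom\frac{|\nabla\neps|^2}{\neps^2}$ plus a bounded multiple of $\intom|\nabla\we|^2$. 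A secondary technical care is ensuring the lower bound on the initial functional $\intom(\ln\nne)\cne$ is uniform in $\eps$, which follows from the convergence $\ln\nne \to \ln n_0$ in $\leb1$ asserted in \eqref{eq:init_approx:n}. Once these bookkeeping estimates are in place, the integration in time and the elementary splitting into positive/negative logarithmic parts are routine, and \eqref{eq:nc_uniform_int:statement} follows with $\gc{nc_uniform_int}$ depending on $T$, $\chi$, $\Omega$, $\|c_0\|_{\leb\infty}$, $\intom n_0$, $\|\ln n_0\|_{\leb1}$, $\|f\|_{C^1}$ and $\gc{quasi_energy}$.
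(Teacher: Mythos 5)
Your approach is essentially the paper's: consider $y_\eps(t) \defs \intom (\ln \neps)\ce$, compute $\ddt y_\eps$ and observe that the consumption term contributes $-\intom \neps f(\ce)\ln\neps$ while all other terms are dominated by a constant multiple of $\intom\frac{|\nabla\neps|^2}{\neps^2} + \intom|\nabla\we|^2$ (the paper's explicit bound is $(\lc{c_infty}+1+\tfrac\chi2)\intom\frac{|\nabla\neps|^2}{\neps^2} + (\lc{c_infty}^2+\tfrac{\chi\lc{c_infty}^2}{2}+\chi\lc{c_infty})\intom\frac{|\nabla\ce|^2}{\ce^2}$, controlled in time by \eqref{eq:quasi_energy:integrated}), then integrate and massage the resulting bound on $\intntom\neps f(\ce)\ln\neps$ into \eqref{eq:nc_uniform_int:statement} via $\ln(\neps f(\ce)) = \ln\neps + \ln f(\ce)$ and the elementary inequality $s|\ln s|\le \ure^{-1}$ on $(0,1)$. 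The paper handles the final conversion more simply than you do: it estimates $\intntom \neps f(\ce)\ln f(\ce) \le \intntom \neps f(\ce)^2 \le \|f\|_\infty^2\, T \intom n_0$ directly (using $\ln s\le s$), rather than your positive/negative splitting, but both routes are fine.

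There is, however, a genuine sign confusion in your step two that leaves one estimate unestablished. Integrating $y_\eps'(t) \le (\text{gradient terms}) - \intom \neps f(\ce)\ln\neps$ over $(0,T)$ and rearranging gives
\begin{align*}
  \intntom \neps f(\ce)\ln\neps \;\le\; \int_0^T(\text{gradient terms}) + y_\eps(0) - y_\eps(T),
\end{align*}
so what you actually need is an \emph{upper} bound for $y_\eps(0)$ (trivial from $\ln s\le s$) and a \emph{lower} bound for $y_\eps(T)$. You instead assert an upper bound for $y_\eps(\cdot)$ (which you'd use at $t=T$) together with a lower bound for $y_\eps(0)$; the latter is vacuous for the above rearrangement, and the former has the wrong sign. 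The needed lower bound $y_\eps(T) \ge -\|c_0\|_{\leb\infty}\int_{\{\neps(\cdot,T)<1\}}|\ln\neps(\cdot,T)|$ does not follow from the initial data assumptions; it requires an $\eps$- and $T$-uniform bound on $-\intom\ln\neps(\cdot,T)$ (equivalently on $\int_{\{\neps<1\}}|\ln\neps|$, since $\int_{\{\neps\ge1\}}\ln\neps\le\intom n_0$). That bound is available, but only from the differential form of the quasi-energy \eqref{eq:quasi_energy:ddt}: integrating it in time and using $\we\ge0$ gives $-\intom\ln\neps(\cdot,T)\le C(T+1)$, which is implicit in the proof of Lemma~\ref{lm:quasi_energy} but not part of the statement \eqref{eq:quasi_energy:integrated}. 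You should invoke this explicitly; as written your argument does not close.

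Everything else in your proposal — the computation of $\ddt y_\eps$, the cancellation of the fluid terms via $\nabla\cdot\ue=0$, the Young estimates for the gradient cross terms, and the final conversion to the absolute-value quantity — is sound and matches the paper's proof.
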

\begin{proof}
  We set $\newlc{c_infty} \defs \|c_0\|_{\leb\infty}$ and calculate
  \begin{align*}
          \ddt \intom (\ln \neps) \ce
    &=    \intom \net \frac{\ce}{\neps} 
          + \intom \cet \ln \neps \\
    &=    - \intom \left(\nabla \neps - \frac{\chi \neps \nabla \ce}{(1+\eps\neps)\ce} \right)
            \cdot \left(-\frac{\ce \nabla \neps}{\neps^2} + \frac{\nabla \ce}{\neps} \right)
          - \intom \frac{\nabla \neps \cdot \nabla \ce}{\neps}
          - \intom \neps f(\ce) \ln \neps \\
    &=    \intom \frac{\ce |\nabla \neps|^2}{\neps^2}
          - 2 \intom \frac{\nabla \neps \cdot \nabla \ce}{\neps}
          - \chi \intom \frac{\nabla \neps \cdot \nabla \ce}{\neps(1+\eps\neps)}
          + \chi \intom \frac{|\nabla \ce|^2}{(1+\eps\neps)\ce}
          - \intom \neps f(\ce) \ln \neps \\
    &\le  \left(\lc{c_infty} + 1 + \frac{\chi}{2} \right) \intom \frac{|\nabla \neps|^2}{\neps^2}
          + \left( \lc{c_infty}^2 + \frac{\chi \lc{c_infty}^2}{2} + \chi \lc{c_infty} \right) \intom \frac{|\nabla \ce|^2}{\ce^2}
          - \intom \neps f(\ce) \ln \neps
  \end{align*}
  in $(0, \infty)$ for all $\eps \in (0, 1)$,
  so that drawing on \eqref{eq:quasi_energy:integrated}, \eqref{eq:init:def_nc} and \eqref{eq:init_approx:n}, we can find $\newlc{nc_ln_n_bdd} > 0$ such that
  \begin{align*}
    \intntom \neps f(\ce) \ln \neps \le \lc{nc_ln_n_bdd}
    \qquad \text{for all $\eps \in (0, 1)$}.
  \end{align*}
  As moreover
  \begin{align*}
          \intntom \neps f(\ce) \ln f(\ce)
    &\le  \intntom \neps f(\ce)^2
     \le  \|f\|_{L^\infty((0, \lc{c_infty}))}^2 T \intom n_0
    \sfed \newlc{nc_ln_c_bdd}
    \qquad \text{for all $\eps \in (0, 1)$}
  \end{align*}
  by \eqref{eq:basic_apriori}, we can conclude that
  \begin{align*}
        \intntom \neps f(\ce) \ln(\neps f(\ce))
    =   \intntom \neps f(\ce) (\ln \neps  + \ln f(\ce))
    \le \lc{nc_ln_n_bdd} + \lc{nc_ln_c_bdd}
    \qquad \text{for all $\eps \in (0, 1)$}.
  \end{align*}
  Since $s |\ln s| \le \frac1\ure$ for $s \in (0, 1)$, this implies \eqref{eq:nc_uniform_int:statement}
  for $\gc{nc_uniform_int} \defs \frac{2|\Omega| T}{\ure} + \lc{nc_ln_n_bdd} + \lc{nc_ln_c_bdd}$.
\end{proof}

The a~priori estimate \eqref{eq:nc_uniform_int:statement} now allows us to indeed obtain convergence properties
guaranteeing that also \eqref{eq:sol_concept:v_weak_sol} is fulfilled by the functions constructed in Lemma~\ref{lm:eps_sea_0}.
\begin{lemma}\label{lm:strong_conv}
  Let $n$, $c$ and $(\eps_j)_{j \in \N}$ be as given by Lemma~\ref{lm:eps_sea_0}.
  Then there exists a subsequence of $(\eps_j)_{j \in \N}$, which we do not relabel, such that
  \begin{alignat}{2}
    \neps f(\ce) & \ra n f(c) && \qquad \text{in $L_{\loc}^1(\Ombarinf)$}, \label{eq:strong_conv:nc_l1} \\
    \nabla \ce & \ra \nabla c && \qquad \text{in $L_{\loc}^2(\Ombarinf)$}, \label{eq:strong_conv:nabla_c_l2}
  \end{alignat}
  as $\eps = \eps_j \sea 0$.
  Moreover, \eqref{eq:sol_concept:v_weak_sol} holds.
\end{lemma}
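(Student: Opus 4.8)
The plan is to deduce the strong convergences \eqref{eq:strong_conv:nc_l1} and \eqref{eq:strong_conv:nabla_c_l2} from the uniform integrability estimate of Lemma~\ref{lm:nc_uniform_int} together with the a.e.\ convergences already secured in Lemma~\ref{lm:eps_sea_0}, and then to pass to the limit in the weak formulation of the second equation of \eqref{prob:approx}. First I would note that by \eqref{eq:eps_sea_0:n_t_pw} and \eqref{eq:eps_sea_0:c_l2} we have $\neps f(\ce) \to n f(c)$ a.e.\ in $\Omega \times (0, \infty)$ (using continuity of $f$), while Lemma~\ref{lm:nc_uniform_int} shows that $(\neps f(\ce))_{\eps \in (0,1)}$ is bounded in $L \log L(\Omega \times (0,T))$ for every $T > 0$ and hence, by the de la Vall\'ee Poussin theorem, uniformly integrable over $\Omega \times (0,T)$. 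Vitali's convergence theorem then yields \eqref{eq:strong_conv:nc_l1} along a subsequence (extracted and, as stated, not relabelled); a diagonal argument over $T = 1, 2, \dots$ upgrades this to convergence in $L_{\loc}^1(\Ombarinf)$.

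Next I would obtain the strong gradient convergence \eqref{eq:strong_conv:nabla_c_l2}. The idea is to test the $\ce$-equation and the limiting $c$-equation against suitable cut-offs and compare the resulting Dirichlet-energy identities. Concretely, fix $T > 0$ and a cut-off $\zeta \in C_c^\infty([0,T))$ with $0 \le \zeta \le 1$; testing the second equation in \eqref{prob:approx} with $\zeta \ce$ gives
\begin{align*}
  \int_0^T \zeta \intom |\nabla \ce|^2
  = \tfrac12 \intom \cne^2 \zeta(0)
  + \tfrac12 \int_0^T \zeta' \intom \ce^2
  - \int_0^T \zeta \intom \neps f(\ce) \ce,
\end{align*}
where the convective term drops out since $\nabla \cdot \ue = 0$ and $\ue = 0$ on $\partial\Omega$. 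The right-hand side converges, as $\eps = \eps_j \sea 0$, to the corresponding expression for the limit $c$ (using \eqref{eq:init_approx:c}, \eqref{eq:eps_sea_0:c_l2}, dominated convergence for the $L^2$ term since $0 \le \ce \le \lc{c_infty}$, and \eqref{eq:strong_conv:nc_l1} together with $\ce \to c$ in $L_{\loc}^2$ for the consumption term). The limiting $c$-equation, which is available because $c$ inherits the weak formulation from the compactness already used in Lemma~\ref{lm:eps_sea_0}, gives the analogous identity with $c$ in place of $\ce$; hence $\int_0^T \zeta \intom |\nabla \ce|^2 \to \int_0^T \zeta \intom |\nabla c|^2$. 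Combined with the weak convergence $\nabla \ce \rh \nabla c$ in $L^2(\Omega \times (0,T))$ from \eqref{eq:eps_sea_0:nabla_c_l2} and weak lower semicontinuity, convergence of the (weighted) $L^2$ norms upgrades the weak convergence to strong convergence of $\nabla \ce$ in $L^2_{\loc}(\Ombarinf)$, which is \eqref{eq:strong_conv:nabla_c_l2}. (Strictly speaking one first derives it locally in time and then exhausts $[0,\infty)$; and the limiting $c$-equation tested with $\zeta c$ is legitimate by a standard Steklov-averaging / density argument given $c \in L^2_{\loc}([0,\infty); \sob12)$ with the time-regularity from Lemma~\ref{lm:time_est}.)

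Finally, with these convergences in hand, passing to the limit in \eqref{eq:sol_concept:v_weak_sol} is routine. For fixed $\varphi \in C_c^\infty(\Ombarinf)$: the term $-\intninfom \ce \varphi_t \to -\intninfom c \varphi_t$ and $-\intom \cne \varphi(\cdot,0) \to -\intom c_0 \varphi(\cdot,0)$ by \eqref{eq:eps_sea_0:c_l2} and \eqref{eq:init_approx:c}; the diffusion term $-\intninfom \nabla \ce \cdot \nabla \varphi \to -\intninfom \nabla c \cdot \nabla \varphi$ by \eqref{eq:eps_sea_0:nabla_c_l2} (weak convergence already suffices here); the consumption term $-\intninfom \neps f(\ce) \varphi \to -\intninfom n f(c) \varphi$ by \eqref{eq:strong_conv:nc_l1}; and the convective term $\intninfom \ce (\ue \cdot \nabla \varphi) \to \intninfom c (u \cdot \nabla \varphi)$ since $\ce \to c$ in $L^2_{\loc}$ and $\ue \to u$ in $L^2_{\loc}$ by \eqref{eq:eps_sea_0:c_l2} and \eqref{eq:eps_sea_0:u_l2} (a product of two strongly convergent $L^2$ factors against the bounded $\nabla\varphi$). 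This establishes \eqref{eq:sol_concept:v_weak_sol}.

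I expect the main obstacle to be the rigorous justification of the energy-identity comparison underlying \eqref{eq:strong_conv:nabla_c_l2}: one must verify that $\ce$ and the limit $c$ genuinely satisfy the tested identities (the test function $\zeta \ce$, respectively $\zeta c$, is not admissible in \eqref{eq:sol_concept:v_weak_sol} directly and requires a time-mollification argument, using the $\dual{\sob N2}$-bound on $(\ln \ce)_t$ from Lemma~\ref{lm:time_est} to control the time derivative), and that every term on the right-hand side passes to the limit — in particular the consumption term $\int \neps f(\ce)\ce$, which is where the strong $L^1$-convergence of $\neps f(\ce)$ from Lemma~\ref{lm:nc_uniform_int} is indispensable, the weak convergence from Lemma~\ref{lm:eps_sea_0} alone being insufficient. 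Everything else is a standard application of Vitali's theorem and weak/strong convergence bookkeeping.
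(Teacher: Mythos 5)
Your proof is correct and follows essentially the same strategy as the paper: de la Vall\'ee Poussin plus Vitali for \eqref{eq:strong_conv:nc_l1}, passing to the limit in the weak formulation of the $c$-equation, and then a comparison of Dirichlet-energy identities (equivalently, convergence of the $L^2$ norms of $\nabla\ce$) combined with weak convergence to upgrade to \eqref{eq:strong_conv:nabla_c_l2}; the paper phrases the last step as $\intntom |\nabla c|^2 \ge \limsup_j \intntom |\nabla c_{\eps_j}|^2$ (citing Winkler's treatment) rather than an equality, but the substance is identical. One small ordering remark: you derive \eqref{eq:strong_conv:nabla_c_l2} before establishing \eqref{eq:sol_concept:v_weak_sol}, yet your energy comparison presupposes the limiting $c$-equation, which does \emph{not} follow from Lemma~\ref{lm:eps_sea_0} alone (as you suggest) but requires the just-proved \eqref{eq:strong_conv:nc_l1}; the paper avoids this by establishing \eqref{eq:sol_concept:v_weak_sol} first and only then invoking it for the gradient convergence, which is the cleaner order and the one you should adopt.
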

\begin{proof}
  According to Lemma~\ref{lm:nc_uniform_int} and the de la Vall\'ee Poussin theorem,
  $(\neps f(\ce))_{\eps \in (0, 1)}$ is uniformly integrable in $\Omega \times (0, T)$ for all $T \in (0, \infty)$,
  so that Vitali's theorem, \eqref{eq:eps_sea_0:n_pw}, \eqref{eq:eps_sea_0:c_l2} and a diagonalization argument
  imply that there exists a subsequence of $(\eps_j)_{j \in \N}$ such that \eqref{eq:strong_conv:nc_l1} holds.
  Combined with \eqref{eq:init_approx:c} and Lemma~\ref{lm:eps_sea_0},
  this shows that we can pass to the limit in the weak formulation of the second equation in \eqref{prob:approx}; that is, \eqref{eq:sol_concept:v_weak_sol} holds.

  By testing \eqref{eq:sol_concept:v_weak_sol} with regular functions approximating $c$,
  one can obtain $\intntom |\nabla c|^2 \ge \limsup_{j \to \infty} \intntom |\nabla c_{\eps_j}|^2$ for a.e.\ $T > 0$,
  see for instance \cite[Subsection~4.2]{WinklerGlobalMasspreservingSolutions2018} for details.
  In conjunction with \eqref{eq:eps_sea_0:nabla_c_l2}, this then gives \eqref{eq:strong_conv:nabla_c_l2}.
  (Let us remark that in the fluid-free case, \eqref{eq:strong_conv:nabla_c_l2} alternatively also directly follows from \cite[Theorem~1.1]{FuestStrongConvergenceWeighted2022}.)
\end{proof}

Although \eqref{eq:strong_conv:nabla_c_l2} is an important step towards showing that also \eqref{eq:sol_concept:n_ln_supersol} holds,
appropriate convergence of the critical term $-\chi \frac{\neps \nabla \neps \cdot \nabla \ce}{(\neps+1)^2(1+\eps \neps) \ce}$ seems to require strong convergence of $\nabla \ln \ce$, not only of $\nabla \ce$.
Unfortunately, combining the estimate for $\nabla \ln \ce$ contained in \eqref{eq:quasi_energy:integrated} and the convergence asserted in \eqref{eq:strong_conv:nabla_c_l2}
is just about insufficient to prove the desired strong $L^2$ convergence of $\nabla \ln \ce$.

In the two-dimensional setting, one can rely on \eqref{eq:eps_sea_0:n_l1}
to show that $\we$ converges to a function solving the formal limit of the partial differential equation appearing in \eqref{eq:w_eg} in a weak sense
and then test this limit equation with suitably chosen Steklov averages in order to eventually obtain strong convergence of $\nabla \we$;
we refer to \cite[Lemma~2.9 and Lemma~2.10]{WangGlobalLargedataGeneralized2016} for details in a closely related setting.

In the higher dimensional setting however, we do not have \eqref{eq:eps_sea_0:n_l1} at our disposal and hence need to argue differently.
The general strategy consists of deriving and making use of stronger bounds for $\nabla \ln \ce$ than those provided by \eqref{eq:quasi_energy:integrated},
a concept discussed in detail in \cite[Section~4]{FuestStrongConvergenceWeighted2022}.
Crucially making use of \eqref{eq:ev_smooth:cond_f}, we can indeed obtain such estimates.
\begin{lemma}\label{lm:nabla_ln_c_eta}
  Let $T > 0$ and $\eta \in (0, 1)$.
  Then there exists $A > 0$ such that
  \begin{align}\label{eq:nabla_ln_c_eta:statement}
    \int_0^T \intom \mathds 1_{\{\ce \le A\}} \frac{|\nabla \ce|^2}{\ce^2} \le \eta
    \qquad \text{for all $\eps \in (0, 1)$}.
  \end{align}
\end{lemma}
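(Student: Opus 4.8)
The plan is to pass to the variable $\we = -\ln(\ce/\|c_0\|_{\leb\infty})$ from Lemma~\ref{lm:we} and to exploit that, by \eqref{eq:ev_smooth:cond_f}, the consumption density $\ce^{-1} f(\ce)$ is uniformly small precisely on the region where $\we$ is large. Since $\nabla \we = -\ce^{-1}\nabla \ce$ and, for $A > 0$, the set $\{\ce \le A\}$ coincides with $\{\we \ge \ln(\|c_0\|_{\leb\infty}/A)\}$, it suffices to produce a $\kappa > 1$ with $\intntom \mathds 1_{\{\we \ge \kappa\}} |\nabla \we|^2 \le \eta$ for all $\eps \in (0,1)$ and to then set $A \defs \|c_0\|_{\leb\infty} \ure^{-\kappa}$.

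First I would fix, for $\kappa > 1$ to be chosen, a nondecreasing $\psi \in C^\infty(\R)$ with $0 \le \psi \le 1$, $\psi \equiv 0$ on $(-\infty, \kappa-1]$ and $\psi \equiv 1$ on $[\kappa, \infty)$, and let $\Psi$ be its primitive with $\Psi(0) = 0$, so that $0 \le \Psi(s) \le (s - \kappa + 1)_+$ for $s \ge 0$. Testing the third identity in \eqref{eq:w_eg} with $\psi(\we)$ (which is admissible since $\we$ is smooth by Lemma~\ref{lm:global_ex_approx}) and using $\nabla \cdot \ue = 0$ together with $\ue|_{\partial\Omega} = 0$ to discard the convective contribution $\intom \ue \cdot \nabla \Psi(\we) = 0$ yields
\[
  \ddt \intom \Psi(\we) + \intom \psi'(\we) |\nabla \we|^2 + \intom \psi(\we) |\nabla \we|^2 = \intom \psi(\we) \neps \ce^{-1} f(\ce)
\]
on $(0, \infty)$. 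As $\psi' \ge 0$ and $\psi \ge \mathds 1_{\{\we \ge \kappa\}}$, dropping the two nonnegative terms $\intom \psi'(\we)|\nabla\we|^2$ and $\intom \Psi(\we(\cdot,T))$ and integrating over $(0, T)$ leaves
\[
  \intntom \mathds 1_{\{\we \ge \kappa\}} |\nabla \we|^2 \le \intom \Psi(\wne) + \intntom \psi(\we) \neps \ce^{-1} f(\ce).
\]
Since $\cne \ge \infc$ by \eqref{eq:init:def_nc}, we have $\wne \le \ln(\|c_0\|_{\leb\infty}/\infc) \sfed M_0$ uniformly in $\eps$, so the first term on the right vanishes once $\kappa \ge M_0 + 1$.

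To handle the second term, note that $\psi(\we) > 0$ forces $\we > \kappa - 1$, i.e.\ $\ce < \|c_0\|_{\leb\infty} \ure^{-(\kappa-1)} \sfed \delta_\kappa$. Because $f \in C^1([0,\infty))$ with $f(0) = 0$ and $f'(0) = 0$, the quantity $\omega(\delta) \defs \sup_{s \in (0, \delta]} s^{-1} f(s)$ is finite with $\omega(\delta) \to 0$ as $\delta \sea 0$; hence $\ce^{-1} f(\ce) \le \omega(\delta_\kappa)$ wherever $\psi(\we) > 0$. Combining this with $0 \le \psi \le 1$ and $\intom \neps = \intom n_0$ from \eqref{eq:basic_apriori} gives
\[
  \intntom \psi(\we) \neps \ce^{-1} f(\ce) \le \omega(\delta_\kappa)\, T \intom n_0,
\]
and the right-hand side tends to $0$ as $\kappa \to \infty$. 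Fixing $\kappa \ge M_0 + 1$ large enough that it is at most $\eta$, and rewriting $\intntom \mathds 1_{\{\we \ge \kappa\}} |\nabla \we|^2 = \intntom \mathds 1_{\{\ce \le A\}} \frac{|\nabla \ce|^2}{\ce^2}$ with $A \defs \|c_0\|_{\leb\infty} \ure^{-\kappa}$, finishes the proof.

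I do not expect a genuine obstacle here; the only points requiring care are the sign bookkeeping in the tested $\we$-equation --- which the divergence-free condition settles cleanly --- and the uniformity in $\eps$ of the bounds $\wne \le M_0$ and $\intom \neps = \intom n_0$, both of which are already encoded in \eqref{eq:init:def_nc} and \eqref{eq:basic_apriori}. The conceptual core is simply that the superlinear-decay hypothesis \eqref{eq:ev_smooth:cond_f} turns ``$\ce$ small'' into ``$\ce^{-1} f(\ce)$ small'', and that the cutoff $\psi(\we)$ isolates exactly that regime.
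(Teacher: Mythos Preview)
Your proof is correct and takes a genuinely different route from the paper's. The paper tests the $c_\eps$-equation with $-1/g(\ce)$ for a carefully engineered auxiliary function $g \ge f$ satisfying $g(s) = f(s) + \frac{\eta s}{2c_2}$ near the origin, and then exploits that $\lim_{s\sea 0} s^2 g'(s)/g^2(s) = 1/g'(0) = 2c_2/\eta$ to extract the weight $\ce^{-2}$ from the dissipation term $\intntom g'(\ce) g^{-2}(\ce) |\nabla \ce|^2$ on $\{\ce \le A\}$. You instead pass to the $\we$-variable and localize via a smooth cutoff $\psi(\we)$ supported where $\we$ is large; the hypothesis $f'(0)=0$ enters only to bound the source $\neps \ce^{-1} f(\ce)$ on the support of the cutoff. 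Your argument is more elementary and transparent---no auxiliary function needs to be constructed and the dissipation term $\intom \psi(\we)|\nabla \we|^2$ already carries exactly the desired weight---whereas the paper's approach has the minor advantage of staying in the original variable $\ce$. Both proofs ultimately hinge on the same mechanism: \eqref{eq:ev_smooth:cond_f} renders the only unsigned source term small precisely on the set $\{\ce \le A\}$.
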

\begin{proof}
  We recall that $\cne \ge \delta$ by \eqref{eq:init:def_nc} for all $\eps \in (0, 1)$, abbreviate
  \begin{align*}
    \newlc{c0_linfty} \defs \|c_0\|_{\leb\infty}
    \quad \text{and} \quad
    \newlc{bdd} \defs \left\|\frac{ \lc{c0_linfty}|\Omega|}f\right\|_{L^\infty((\infc, \lc{c0_linfty}))} + T \intom n_0,
  \end{align*}
  and choose $s_0 > 0$ so small that $f' \ge - \frac{\eta}{4 \lc{bdd}}$ in $[0, s_0]$.
  This allows us to fix a function $g \in C^1([0, \infty))$ with $g \ge f$ and $g' > 0$ in $(s_0, \infty)$ as well as
  \begin{align*}
    g(s) = f(s) + \tfrac{\eta s}{2 \lc{bdd}} \qquad \text{for $s \in [0, s_0]$}.
  \end{align*}
  Then $g \ge f$ and $g' > 0$ in $(0, \infty)$, and $G(s) \defs - \int_{\lc{c0_linfty}}^s \frac{\dsigma}{g(\sigma)}$ is nonnegative for all $s \in (0, \lc{c0_linfty}]$.
  By testing the second equation in \eqref{prob:approx} with $-\frac1g$, we obtain
  \begin{align}\label{eq:nabla_ln_c_eta:ddt}
        \ddt \intom G(\ce)
    =   \intom \nabla \ce \cdot \nabla \frac1{g(\ce)} + \intom \frac{f(\ce) \neps}{g(\ce)}
    \le - \intom \frac{g'(\ce)}{g^2(\ce)} |\nabla \ce|^2 + \intom n_0
  \end{align}
  in $(0, \infty)$ for all $\eps \in (0, 1)$.
  As moreover
  \begin{align*}
      \intom G(\cne)
    = - \intom \int_{\lc{c0_linfty}}^{\cne(x)} \frac{1}{g(s)} \ds \dx
    \le |\Omega| (\lc{c0_linfty}-\infc) \sup_{s \in [\infc, \lc{c0_linfty}]} \frac{1}{g(s)}
    \le \lc{c0_linfty} |\Omega| \sup_{s \in [\infc, \lc{c0_linfty}]} \frac{1}{f(s)}
  \end{align*}
  by \eqref{eq:init:def_nc},
  integrating \eqref{eq:nabla_ln_c_eta:ddt} over $(0, T)$ yields
  \begin{align*}
        \intom G(\ce(\cdot, T)) + \intntom  \frac{g'(\ce)}{g^2(\ce)} |\nabla \ce|^2
    \le \intom G(\cne) + T \intom n_0
    \le \lc{bdd}.
  \end{align*}
  Since
  \begin{align*}
      \lim_{s \sea 0} \frac{s^2 g'(s)}{g^2(s)}
    = \lim_{s \sea 0} \left( \frac{s-0}{g(s)-g(0)} \right)^2 g'(s)
    = \frac{g'(0)}{(g'(0))^2} = \frac{2\lc{bdd}}{\eta}
  \end{align*}
  by \eqref{eq:ev_smooth:cond_f}, there is $A > 0$ such that $\frac{g'(s)}{g^2(s)} \ge \frac{\lc{bdd}}{\eta s^2}$ for all $s \in (0, A)$.
  Thus, we conclude that
  \begin{align*}
        \frac{\lc{bdd}}{\eta} \intntom \mathds 1_{\{\ce \le A\}} \frac{|\nabla \ce|^2}{\ce^2}
    \le \intntom \mathds 1_{\{\ce \le A\}} \frac{g'(\ce)}{g^2(\ce)} |\nabla \ce|^2
    \le \intntom \frac{g'(\ce)}{g^2(\ce)} |\nabla \ce|^2
    \le \lc{bdd}
  \end{align*}
  for all $\eps \in (0, 1)$,
  which entails \eqref{eq:nabla_ln_c_eta:statement}.
\end{proof}

Similar to \cite[Lemma~3.10]{FuestStrongConvergenceWeighted2022}, \eqref{eq:strong_conv:nabla_c_l2} and \eqref{eq:nabla_ln_c_eta:statement} now imply strong convergence of $\nabla \ln \ce$.
\begin{lemma}\label{lm:strong_conv2}
  Let $n$, $c$ and $(\eps_j)_{j \in \N}$ be as given by Lemma~\ref{lm:strong_conv}.
  Then (without switching to a subsequence)
  \begin{align}
    \nabla \ln \ce & \ra \nabla \ln c \qquad \text{in $L_{\loc}^2(\Ombarinf)$ as $\eps = \eps_j \sea 0$}. \label{eq:strong_conv2:nabla_w_l2}
  \end{align}
\end{lemma}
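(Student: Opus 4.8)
The plan is to route the argument through the elementary Hilbert space fact that a weakly convergent sequence whose norms converge to the norm of the weak limit converges strongly. Since \eqref{eq:eps_sea_0:nabla_ln_c_l2} already provides $\nabla \ln \ce \rh \nabla \ln c$ in $L_{\loc}^2(\Ombarinf)$ along $\eps = \eps_j \sea 0$, it thus suffices to fix $T > 0$ and show that $\intntom |\nabla \ln \ce|^2 \to \intntom |\nabla \ln c|^2$. The limit is finite because $\nabla \ln c \in L_{\loc}^2(\Ombarinf)$ by \eqref{eq:eps_sea_0:nabla_ln_c_l2}, and weak lower semicontinuity of the $L^2$ norm yields $\intntom |\nabla \ln c|^2 \le \liminf_{j \to \infty} \intntom |\nabla \ln \cej|^2$ for free, so only the reverse estimate for the $\limsup$ is at stake.

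To prove it, fix $\eta \in (0, 1)$, let $A > 0$ be as provided by Lemma~\ref{lm:nabla_ln_c_eta} for this $T$ and $\eta$ (so that $\intntom \mathds 1_{\{\ce \le A\}} \frac{|\nabla \ce|^2}{\ce^2} \le \eta$ for all $\eps \in (0, 1)$), and note that $A$ may be decreased without destroying this estimate, so we may assume $\{c = A\}$ is a null subset of $\Omega \times (0, T)$. Using $|\nabla \ln \ce|^2 = \frac{|\nabla \ce|^2}{\ce^2}$, I split $\intntom |\nabla \ln \ce|^2$ into the integral over $\{\ce \le A\}$, which is $\le \eta$ uniformly in $\eps$, and the integral over $\{\ce > A\}$. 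On the latter set the weight $\mathds 1_{\{\ce > A\}} \ce^{-1}$ is bounded by $A^{-1}$ and, because $\cej \to c$ a.e.\ by \eqref{eq:eps_sea_0:c_l2} and $|\{c = A\}| = 0$, converges a.e.\ to $\mathds 1_{\{c > A\}} c^{-1}$, which is likewise bounded by $A^{-1}$. Combining this with the strong convergence $\nabla \cej \to \nabla c$ in $L_{\loc}^2(\Ombarinf)$ from \eqref{eq:strong_conv:nabla_c_l2}---via the standard decomposition $a_\eps b_\eps - ab = a_\eps (b_\eps - b) + (a_\eps - a) b$, where the first summand is controlled by $A^{-1} \|b_\eps - b\|_{L^2}$ and the second by dominated convergence with majorant $\tfrac2A |\nabla c| \in L^2$---shows $\mathds 1_{\{\cej > A\}} \frac{\nabla \cej}{\cej} \to \mathds 1_{\{c > A\}} \frac{\nabla c}{c}$ strongly in $L^2(\Omega \times (0, T); \R^N)$, and in particular $\intntom \mathds 1_{\{\ce > A\}} \frac{|\nabla \ce|^2}{\ce^2} \to \intntom \mathds 1_{\{c > A\}} \frac{|\nabla c|^2}{c^2} \le \intntom |\nabla \ln c|^2$.

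Putting the two pieces together gives $\limsup_{j \to \infty} \intntom |\nabla \ln \cej|^2 \le \eta + \intntom |\nabla \ln c|^2$, and letting $\eta \sea 0$ yields the desired norm convergence for this $T$; as $T > 0$ was arbitrary, \eqref{eq:strong_conv2:nabla_w_l2} follows in conjunction with the weak convergence \eqref{eq:eps_sea_0:nabla_ln_c_l2}.

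I expect the only genuine subtlety to be that the ``small-$\ce$'' set in Lemma~\ref{lm:nabla_ln_c_eta} moves with $\eps$, which rules out a naive splitting of $\Omega \times (0, T)$ into a fixed good and bad region; passing through $L^2$ norm convergence instead of estimating $\nabla \ln \ce - \nabla \ln c$ directly on $\eps$-dependent pieces is precisely what circumvents this. The remaining ingredients---the product-convergence step, a.e.\ convergence of the truncated weights, and weak lower semicontinuity---are routine.
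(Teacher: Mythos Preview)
Your proof is correct and follows essentially the same route as the paper's: both arguments establish strong convergence by combining the weak convergence \eqref{eq:eps_sea_0:nabla_ln_c_l2} with the $\limsup$ norm estimate, splitting the integral into a small-$\ce$ part controlled by Lemma~\ref{lm:nabla_ln_c_eta} and a large-$\ce$ part handled via the strong convergence \eqref{eq:strong_conv:nabla_c_l2}. The only technical difference is that the paper uses a smooth partition of unity $\xi_1 + \xi_2 = 1$ with $\supp \xi_1 \subset [0, A]$ and $\supp \xi_2 \subset [\tfrac{A}{2}, \infty)$ in place of your hard indicators, which lets it avoid the (harmless) step of adjusting $A$ so that $\{c = A\}$ is null; both devices serve the same purpose and the remaining estimates are identical.
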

\begin{proof}
  We fix $T > 0$.
  For arbitrary $\eta > 0$, we let $A > 0$ be as given by Lemma~\ref{lm:nabla_ln_c_eta}.
  Moreover, we fix $\xi_1, \xi_2 \in C^\infty([0, \infty); [0, 1])$ with $\supp \xi_1 \subset [0, A]$, $\supp \xi_2 \subset [\frac A2, \infty)$ and $\xi_1 + \xi_2 = 1$ on $[0, \infty)$.
  As $(\xi_2(\cej) \cej^{-2})_{j \in \N}$ is bounded and converging pointwise a.e.\ to $\xi_2(c)c^{-2}$ due to continuity of $\xi_2$ and \eqref{eq:eps_sea_0:c_l2},
  Lebesgue's theorem and \eqref{eq:strong_conv:nabla_c_l2} show that $\sqrt{\xi_2(\cej)} \cej^{-1} \nabla \cej \to \sqrt{\xi_2(c)} c^{-1} \nabla c$ in $L^2(\Omega \times (0, T))$ as $j \to \infty$.
  In combination with \eqref{eq:nabla_ln_c_eta:statement}, this asserts
  \begin{align*}
          \limsup_{j \to \infty} \intntom \frac{|\nabla \cej|^2}{\cej^2}
    &\le  \limsup_{j \to \infty} \intntom \xi_1(\cej) \frac{|\nabla \cej|^2}{\cej^2} + \limsup_{j \to \infty} \intntom \xi_2(\cej) \frac{|\nabla \cej|^2}{\cej^2} \\
    &\le  \limsup_{j \to \infty} \intntom \mathds 1_{\{\cej \le A\}} \frac{|\nabla \cej|^2}{\cej^2} + \intntom \xi_2(c) \frac{|\nabla c|^2}{c^2} \\
    &\le  \eta + \intntom \frac{|\nabla c|^2}{c^2}.
  \end{align*}
  As $\eta$ was chosen arbitrary, we conclude
  \begin{align*}
          \limsup_{j \to \infty} \intntom \frac{|\nabla \cej|^2}{\cej^2}
    &\le  \intntom \frac{|\nabla c|^2}{c^2},
  \end{align*}
  which together with \eqref{eq:eps_sea_0:nabla_ln_c_l2} implies \eqref{eq:strong_conv2:nabla_w_l2}.
\end{proof}

Crucially relying on \eqref{eq:strong_conv2:nabla_w_l2}, we can finally show that the remaining bullet point in Definition~\ref{def:sol_concept} is also fulfilled;
that is, we verify that the function $n$ given by Lemma~\ref{lm:eps_sea_0} is a weak $\ln$-supersolution of the corresponding equation.
\begin{lemma}\label{lm:u_ln_supersol}
  Let $(n, c, u)$ and $(\eps_j)_{j \in \N}$ be as given by Lemma~\ref{lm:eps_sea_0} and Lemma~\ref{lm:strong_conv}, respectively.
  Then \eqref{eq:sol_concept:n_ln_supersol} holds.
\end{lemma}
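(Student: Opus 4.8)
The plan is to derive, for each fixed $\eps \in (0, 1)$, the exact analogue of \eqref{eq:sol_concept:n_ln_supersol} for the smooth approximate solution $(\neps, \ce, \ue)$---but with the inequality replaced by an equality and with the harmless regularisations $\neps$ and $(1+\eps\neps)$ in place of $n$---and then to pass to the limit along $(\eps_j)_{j \in \N}$ term by term. The only one-sided step will be a weak lower semicontinuity estimate for the Dirichlet-type term, which is precisely the reason the limit relation is a supersolution identity rather than an equation.

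Concretely, I would test the first equation in \eqref{prob:approx} (equivalently \eqref{eq:n_eq_w}) with $\frac{\varphi}{\neps+1}$ for a fixed nonnegative $\varphi \in C_c^\infty(\Ombarinf)$, integrate over $\Ominf$, integrate by parts in the diffusion and taxis terms (the boundary terms vanishing due to $\partial_\nu \neps = 0$) and use $\nabla \cdot \ue = 0$ in the convective term. Since $\partial_t \ln(\neps+1) = \frac{\net}{\neps+1}$ holds pointwise for the classical solution, no concavity argument is needed at this stage, and a short computation yields
\begin{align*}
  &\pe  - \intninfom \ln(\neps+1) \varphi_t
        - \intom \ln(\nne+1) \varphi(\cdot, 0) \\
  &=    \intninfom \frac{|\nabla \neps|^2}{(\neps+1)^2} \varphi
        - \intninfom \frac{\nabla \neps \cdot \nabla \varphi}{\neps+1} \\
  &\pe  - \chi \intninfom \frac{\neps \nabla \neps \cdot \nabla \ce}{(1+\eps \neps)(\neps+1)^2\ce} \varphi
        + \chi \intninfom \frac{\neps \nabla \ce \cdot \nabla \varphi}{(1+\eps \neps)(\neps+1)\ce} \\
  &\pe  + \intninfom \ln(\neps+1) (\ue \cdot \nabla \varphi)
\end{align*}
for all $\eps \in (0, 1)$.

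Next I would let $\eps = \eps_j \sea 0$ in each term. On the left-hand side, \eqref{eq:eps_sea_0:ln_n_l2} handles the first integral, while $\ln(\nne+1) \to \ln(n_0+1)$ in $\leb1$ follows from $0 \le \ln(\nne+1) \le \nne$ together with \eqref{eq:init_approx:n} and a Vitali-type argument. The term $\intninfom \frac{\nabla \neps \cdot \nabla \varphi}{\neps+1} = \intninfom \nabla \ln(\neps+1) \cdot \nabla \varphi$ converges by the weak convergence \eqref{eq:eps_sea_0:nabla_ln_n_l2}, and $\intninfom \ln(\neps+1)(\ue \cdot \nabla \varphi)$ converges because $\ln(\neps+1) \to \ln(n+1)$ and $\ue \to u$ in $L_{\loc}^2(\Ombarinf)$ by \eqref{eq:eps_sea_0:ln_n_l2} and \eqref{eq:eps_sea_0:u_l2}. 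The leftover Dirichlet-type term $\intninfom \frac{|\nabla \neps|^2}{(\neps+1)^2} \varphi = \intninfom |\nabla \ln(\neps+1)|^2 \varphi$ obeys only the one-sided estimate $\liminf_{j \to \infty} \intninfom |\nabla \ln(n_{\eps_j}+1)|^2 \varphi \ge \intninfom \frac{|\nabla n|^2}{(n+1)^2} \varphi$, using $\varphi \ge 0$ and \eqref{eq:eps_sea_0:nabla_ln_n_l2}.

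The main obstacle is the cross term $-\chi \intninfom \frac{\neps \nabla \neps \cdot \nabla \ce}{(1+\eps \neps)(\neps+1)^2\ce} \varphi = \chi \intninfom \frac{\neps}{(1+\eps \neps)(\neps+1)} \nabla \ln(\neps+1) \cdot \nabla \ln \ce \, \varphi$, which pairs the merely weakly convergent $\nabla \ln(\neps+1)$ with a gradient of $\ce$. Here the crucial input is Lemma~\ref{lm:strong_conv2}, which provides the \emph{strong} convergence $\nabla \ln \ce \to \nabla \ln c$ in $L_{\loc}^2(\Ombarinf)$; since moreover $\frac{\neps}{(1+\eps\neps)(\neps+1)}$ is bounded by $1$ and converges a.e.\ to $\frac{n}{n+1}$ (using \eqref{eq:eps_sea_0:n_pw} and $\eps_j \neps \to 0$ a.e.), dominated convergence gives $\frac{\neps}{(1+\eps\neps)(\neps+1)} \varphi \, \nabla \ln \ce \to \frac{n}{n+1} \varphi \, \nabla \ln c$ strongly in $L_{\loc}^2(\Ombarinf)$, and the weak--strong pairing with \eqref{eq:eps_sea_0:nabla_ln_n_l2} lets this term converge to $-\chi \intninfom \frac{n \nabla n \cdot \nabla c}{(n+1)^2 c} \varphi$. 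The companion flux term $\chi \intninfom \frac{\neps}{(1+\eps\neps)(\neps+1)} \nabla \ln \ce \cdot \nabla \varphi$ is easier, as both factors converge strongly in $L_{\loc}^2(\Ombarinf)$. Combining, the $\eps$-level identity has the form $A_\eps = B_\eps + C_\eps + D_\eps + E_\eps + F_\eps$ where, along $\eps_j \sea 0$, all of $A_{\eps_j}, C_{\eps_j}, D_{\eps_j}, E_{\eps_j}, F_{\eps_j}$ converge to the corresponding limit quantities while $\liminf_{j\to\infty} B_{\eps_j} \ge B$; hence $\lim_{j\to\infty} B_{\eps_j} = A - C - D - E - F$ exists and satisfies $A - C - D - E - F \ge B$, i.e.\ $A \ge B + C + D + E + F$, which is precisely \eqref{eq:sol_concept:n_ln_supersol}.
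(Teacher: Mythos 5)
Your proposal is correct and follows essentially the same route as the paper's own proof: test the first equation of \eqref{prob:approx} with $\frac{\varphi}{\neps+1}$, integrate by parts to obtain the $\eps$-level identity, and pass to the limit using \eqref{eq:init_approx:n}, \eqref{eq:eps_sea_0:n_pw}--\eqref{eq:eps_sea_0:nabla_ln_n_l2}, \eqref{eq:eps_sea_0:u_l2}, and in particular the strong convergence \eqref{eq:strong_conv2:nabla_w_l2} of $\nabla \ln \ce$ for the critical cross term, with weak lower semicontinuity turning the Dirichlet-type term into the one inequality that makes the limit a $\ln$-supersolution identity rather than an equation. Your version is merely more explicit in the term-by-term bookkeeping (and correctly keeps the $(1+\eps\neps)$-factor in the flux term, which drops harmlessly in the limit).
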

\begin{proof}
  Testing the first equation in \eqref{prob:approx} with $\frac{\varphi}{\neps+1}$ for henceforth fixed $0 \le \varphi \in C_c^\infty(\Ombarinf)$ gives 
  \begin{align*}
    &\pe  - \intninfom \ln(\neps+1) \varphi_t
          - \intom \ln(\nne+1) \varphi(\cdot, 0) \\
    &=    \intninfom \frac{|\nabla \neps|^2}{(\neps+1)^2} \varphi
          - \intninfom \frac{\nabla \neps \cdot \nabla \varphi}{\neps+1} \notag \\
    &\pe  - \chi \intninfom \frac{\neps \nabla \neps \cdot \nabla \ce}{(\neps+1)^2(1+\eps\neps)\ce} \varphi
          + \chi \intninfom \frac{\neps \nabla \ce \cdot \nabla \varphi}{(\neps+1)\ce} \notag \\
    &\pe  + \intninfom \ln(\neps+1) (\ue \cdot \nabla \varphi)
    \qquad \text{for all $\eps \in (0, 1)$}
  \end{align*}
  and the convergence properties asserted by \eqref{eq:init_approx:n}, Lemma~\ref{lm:eps_sea_0} and Lemma~\ref{lm:strong_conv} allow us to take the limit in each term herein (at the cost of changing ``$=$'' to ``$\ge$'').
  For instance, \eqref{eq:eps_sea_0:nabla_ln_n_l2}, nonnegativity of $\varphi$ and the weak lower semicontinuity of the norm imply
  \begin{align*}
    \liminf_{j \to \infty} \intninfom \frac{|\nabla n_{\eps_j}|^2}{(n_{\eps_j}+1)^2} \varphi
    \ge \intninfom \frac{|\nabla n|^2}{(n+1)^2} \varphi,
  \end{align*}
  and 
  \begin{align*}
      \lim_{j \to \infty} \intninfom \frac{n_{\eps_j} \nabla n_{\eps_j} \cdot \nabla \cej}{(n_{\eps_j}+1)^2(1+\eps_jn_{\eps_j}) \cej} \varphi
    = \intninfom \frac{n \nabla n \cdot \nabla c}{(n+1)^2 c} \varphi
  \end{align*}
  follows from \eqref{eq:eps_sea_0:n_pw}, \eqref{eq:eps_sea_0:nabla_ln_n_l2} and \eqref{eq:strong_conv2:nabla_w_l2}.
\end{proof}

\subsection{Proof of Theorem~\ref{th:global_ex}}\label{sec:pf_th1}
All that remains to do regarding Theorem~\ref{th:global_ex} is to collect the statements proven in the lemmata above.
\begin{proof}
  That the triple $(n, c, u)$ constructed in Lemma~\ref{lm:eps_sea_0} is a global generalized solution in the sense of Definition~\ref{def:sol_concept} follows directly
  from Lemma~\ref{lm:eps_sea_0}, Lemma~\ref{lm:strong_conv} and Lemma~\ref{lm:u_ln_supersol}.
\end{proof}

\section{Eventual smoothness and stabilization in the two-dimensional setting}\label{sec:ev_smooth}
Apart from the assumptions made in the beginning of Section~\ref{sec:global_ex},
throughout this section we also assume $N=2$, i.e.\ that $\Omega$ is a planar domain.
%fix the global generalized solution $(n, c, u)$ of \eqref{prob:fluid_incl} given by Theorem~\ref{th:global_ex}
%and require that \eqref{eq:ev_smooth:cond_f} holds.
Moreover, we again fix initial data $(\nne, \cne, \une)_{\eps \in (0, 1)}$ and the corresponding solutions $(\neps, \ce, \ue, P_\eps)$, $\eps \in (0, 1)$, of \eqref{prob:approx} given by Lemma~\ref{lm:global_ex_approx}
let $\we$ be as in \eqref{eq:def_w} and abbreviate $\wne \defs \we(\cdot, 0)$ for $\eps \in (0, 1)$.

Our goal is to prove Theorem~\ref{th:ev_smooth}, that is, that the global generalized solution $(n, c, u)$ given by Theorem~\ref{th:global_ex} eventually becomes smooth and stabilizes in the large-time limit.
To that end, we first construct an energy functional in Subsection~\ref{sec:cond_energy}
which is conditional in the sense that it requires the smallness of certain quantities at some time $t_0$ and then provides estimates in $\Omega \times (t_0, \infty)$.
Next, in Subsection~\ref{sec:ev_small} we show that such a $t_0$ indeed exists,
before making use of the bounds provided by the conditional energy functional in Subsection~\ref{sec:ev_smooth_large_time}
and proving Theorem~\ref{th:ev_smooth} and Theorem~\ref{eq:ev_smooth:cond_f} in Subsection~\ref{sec:proof_ev_smooth}.

As already mentioned in the introduction, the main difference compared to \cite{LiuLargetimeBehaviorTwodimensional2021}
is that we do not need to require smallness of $\intom n_0$.
Instead, we make use of the assumption \eqref{eq:ev_smooth:cond_f} both in Lemma~\ref{lm:cond_func} and Lemma~\ref{lm:nabla_ln_n_nabla_w_uniform_spacetime}
and of Heihoff's inequalities \eqref{eq:heihoff_ineq:1} and \eqref{eq:heihoff_ineq:2} in Lemma~\ref{lm:nabla_ln_n_nabla_w_uniform_spacetime} (and indirectly in Lemma~\ref{lm:cond_func_result} which refers to Lemma~\ref{lm:fluid_energy}).

\subsection{A conditional energy functional}\label{sec:cond_energy}
Following \cite[Lemma~5.1]{LiuLargetimeBehaviorTwodimensional2021}
(cf.\ also \cite[Lemma~3.1]{WinklerSmallmassSolutionsTwodimensional2020} and \cite[Lemma~3.7]{HeihoffTwoNewFunctional2022} for precedents for systems with linear taxis sensitivity),
we now consider the functional
\begin{align}\label{eq:def_f}
  \mc F_{K, L, M, \eps} \defs 
    \intom \neps \ln \left( \frac{\neps}{\ol n_0} \right)
  + \frac{K}{2} \intom |\nabla \we|^2
  + \frac1{2L} \intom |\ue|^2
  + \frac{M}{2} \intom \ce^2
\end{align}
for $K, L, M > 0$ and $\eps \in (0, 1)$.
We crucially rely on \eqref{eq:ev_smooth:cond_f} in order to show that $\mc F_{K, L, M, \eps}$ is a conditional energy functional for suitably chosen parameters.
This is an important difference to \cite[Lemma~5.1]{LiuLargetimeBehaviorTwodimensional2021}, where a similar result (for $(K, L, M) = (1, 1, 0)$) has been shown under the assumption that $\intom n_0$ is sufficiently small.
\begin{lemma}\label{lm:cond_func}
  There exist $K, L, M > 0$, $\eta_0 > 0$ and $\newgc{ev_space_time_bdd} > 0$ with the following property:
  Let $\eta \in (0, \eta_0)$, $\eps \in (0, 1)$ and $\mc F_{K, L, M, \eps}$ be as in \eqref{eq:def_f}.
  If there is $t_0 \ge 0$ such that
  \begin{align}\label{eq:cond_func:cond}
    \mc F_{K, L, M, \eps}(t_0) < \eta,
  \end{align}
  then
  \begin{align}\label{eq:cond_func:statement1}
    \mc F_{K, L, M, \eps}(t) \le \eta
    \qquad \text{for all $t \ge t_0$}
  \end{align}
  and
  \begin{align}\label{eq:cond_func:statement2}
      \int_{t_0}^\infty \intom \frac{|\nabla \neps|^2}{\neps}
    + \int_{t_0}^\infty \intom |\Delta \we|^2
    + \int_{t_0}^\infty \intom |\nabla \ue|^2 
    + \int_{t_0}^\infty \intom |\nabla \ce|^2 
    \le \gc{ev_space_time_bdd}.
    %\qquad \text{for all $\eps \in (0, 1)$}.
  \end{align}
\end{lemma}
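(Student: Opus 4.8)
The plan is a continuity (bootstrap) argument built on a differential inequality for $\mc F \defs \mc F_{K, L, M, \eps}$, whose parameters I would keep free for now and fix at the end ($K$ and $M$ small, $L$ large, $\eta_0$ small). First I would compute $\ddt \mc F$ along the approximate solutions from Lemma~\ref{lm:global_ex_approx} by testing the first equation in the form \eqref{eq:n_eq_w} with $1 + \ln(\neps/\ol n_0)$ (so that the mean part of $\net$ drops out), the $\we$-equation from \eqref{eq:w_eg} with $-K\Delta\we$, the fluid equation with $\tfrac1L \ue$, and the $\ce$-equation with $M\ce$. Exploiting $\nabla \cdot \ue = 0$ and the boundary conditions $\partial_\nu \neps = \partial_\nu \we = 0$, $\ue|_{\partial\Omega} = 0$ to discard the pressure/convective contributions, and throwing away the manifestly good term $-M\intom \neps f(\ce) \ce \le 0$, this should produce
\[
  \ddt \mc F + \mc D_\eps \le R_\eps,
  \qquad
  \mc D_\eps \defs \intom \frac{|\nabla \neps|^2}{\neps} + K\intom |\Delta \we|^2 + \tfrac1L\intom |\nabla \ue|^2 + M\intom |\nabla \ce|^2,
\]
where $R_\eps$ collects the error terms $-\chi\intom \frac{\nabla \we \cdot \nabla \neps}{1 + \eps\neps}$ (chemotactic cross term), $K\intom |\nabla \we|^2 \Delta \we$, $K\intom (\ue \cdot \nabla \we)\Delta \we$ and $-K\intom \neps \ce^{-1} f(\ce) \Delta \we$ (from the $\we$-test), and $\tfrac1L\intom \neps \nabla \phi \cdot \ue$ (buoyancy).

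The heart of the proof — and the step I expect to be the main obstacle — is to show that for appropriately chosen $K, M, L$ and small $\eta_0$ one has $R_\eps \le \tfrac12 \mc D_\eps$ whenever $\mc F \le \eta_0$, \emph{without} any smallness of $\chi$ or of $\intom n_0$. The available tools are the two-dimensional Gagliardo--Nirenberg inequalities (to bound $\|\nabla \we\|_{\leb4}$, $\|\neps\|_{\leb2}$ and $\|\ue\|_{\leb4}$), Neumann elliptic regularity (to replace $\|D^2\we\|_{\leb2}$ by $\|\Delta \we\|_{\leb2} + \|\nabla \we\|_{\leb2}$), the Poincaré inequality for $\ue$ (via $\ue|_{\partial\Omega} = 0$) and the elementary bound $\intom |\nabla \we|^2 = -\intom (\we - \ol{\we})\Delta \we \le c\|\nabla \we\|_{\leb2}\|\Delta \we\|_{\leb2}$, and — crucially — the smallness carried by $\mc F \le \eta_0$, i.e.\ $\|\nabla \we\|_{\leb2}^2 \le \tfrac2K\eta_0$, $\|\ue\|_{\leb2}^2 \le 2L\eta_0$, $\|\ce\|_{\leb2}^2 \le \tfrac2M\eta_0$, and (by the Csiszár--Kullback inequality of Lemma~\ref{lm:csiszar-kullback} applied to $\neps$, bootstrapped through Gagliardo--Nirenberg against the $\neps$-dissipation) smallness of $\neps - \ol n_0$ in $\leb1$ and stronger norms. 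The buoyancy term is first rewritten as $\tfrac1L\intom (\neps - \ol n_0)\nabla \phi \cdot \ue$, the mean part vanishing because $\nabla \cdot \ue = 0$ and $\ue|_{\partial\Omega} = 0$, and then absorbed using smallness of $\|\neps - \ol n_0\|_{\leb2}$ and the Poincaré inequality; the cross and $|\nabla \we|^2\Delta \we$ terms are Young/Gagliardo--Nirenberg estimates in which the small factor $\|\nabla \we\|_{\leb2}$ does the work. The decisive use of \eqref{eq:ev_smooth:cond_f} is in the consumption-type term $-K\intom \neps \ce^{-1} f(\ce)\Delta \we$: since $f(s)/s \to 0$ as $s \sea 0$ while $\ce^{-1}f(\ce)$ is bounded on $\{\ce \ge A\}$ for any fixed $A>0$ (because $\ce \le \|c_0\|_{\leb\infty}$), one splits at a threshold $A$; on $\{\ce < A\}$ the weight $\ce^{-1}f(\ce) \le \sup_{(0, A]} f(s)/s$ can be made as small as desired, compensating for the absence of a bound on $\neps$, and on $\{\ce \ge A\}$ one integrates by parts and uses $\nabla \we = -\nabla \ce/\ce$ together with boundedness (indeed smallness near $0$) of $s \mapsto f'(s) - f(s)/s$ — again a consequence of $f(0) = f'(0) = 0$ and \eqref{eq:ev_smooth:cond_f} — to reduce that contribution to a controllable multiple of $\intom |\nabla \ce|^2$. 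This is precisely the place where the argument supplants the smallness of $\intom n_0$ relied on in \cite[Lemma~5.1]{LiuLargetimeBehaviorTwodimensional2021}; getting every resulting constant to be genuinely dominated by $\mc D_\eps$ (up to the factor $\eta_0$), by systematically subtracting spatial means and choosing the parameters in the right order, is the delicate part.

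Once $\ddt \mc F + \tfrac12 \mc D_\eps \le 0$ holds on $\{\mc F \le \eta_0\}$, both assertions are routine. For \eqref{eq:cond_func:statement1}: given $t_0 \ge 0$ with $\mc F(t_0) < \eta < \eta_0$, continuity of $t \mapsto \mc F(t)$ and the fact that $\mc F$ is non-increasing as long as it stays $\le \eta_0$ forbid $\mc F$ from ever exceeding $\eta$ on $[t_0, \infty)$ — if $t_1 \defs \inf\{t > t_0 : \mc F(t) > \eta\}$ were finite, then $\mc F(t_1) = \eta \le \eta_0$ would force $\ddt \mc F \le 0$ on a neighbourhood of $t_1$ on which $\mc F \le \eta_0$, contradicting the definition of $t_1$. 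For \eqref{eq:cond_func:statement2}: integrating $\ddt \mc F \le -\tfrac12\mc D_\eps$ over $(t_0, \infty)$ and using that $\mc F \ge 0$ (each summand in \eqref{eq:def_f} is nonnegative, the first by $s \ln s \ge s - 1$) gives $\int_{t_0}^\infty \mc D_\eps \le 2\mc F(t_0) \le 2\eta_0$, and dividing by the fixed factors $\min\{1, K, \tfrac1L, M\}$ yields \eqref{eq:cond_func:statement2} with $\gc{ev_space_time_bdd}$ depending only on $K, L, M, \eta_0$.
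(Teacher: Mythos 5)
Your proposal follows essentially the same route as the paper's proof: same choice of functional and test functions ($1+\ln(\neps/\ol n_0)$, $-K\Delta\we$, $\tfrac1L\ue$, $M\ce$), same combination of Gagliardo--Nirenberg, Poincar\'e, and the $W^{1,1}\embed L^2$ embedding, the same decisive splitting of the consumption term at a threshold $A$ for $\ce$ that exploits \eqref{eq:ev_smooth:cond_f}, and the same continuity argument followed by integration. A few of the finer details are described more loosely than the paper works them out, and I want to flag two of them.

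First, you say that on $\{\ce<A\}$ the smallness of $\ce^{-1}f(\ce)$ ``compensates for the absence of a bound on $\neps$.'' That is not quite how it works: even with a small weight, $\intom\neps(\text{small})|\nabla\we|^2$ still contains the unbounded factor $\neps$. The paper resolves this by splitting $\neps = (\neps - \ol n_0) + \ol n_0$ \emph{inside} the offending term. The constant part $\ol n_0$ then pairs with the small weight (and with the Poincar\'e inequality and the choice $K = 16\lc{poincare}\chi^2\ol n_0$, respectively), while the fluctuation part $|\neps - \ol n_0|$ is estimated by Young against $|\nabla\we|^4$ and $(\neps-\ol n_0)^2$, with the latter absorbed into the dissipation via $\intom(\neps-\ol n_0)^2 \le \lc{sobolev}(\intom n_0)\intom\frac{|\nabla\neps|^2}{\neps}$. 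You do gesture at ``systematically subtracting spatial means,'' but this specific step is where the argument either succeeds or fails and deserves to be made explicit.

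Second, for the buoyancy term you propose to use \emph{smallness} of $\|\neps-\ol n_0\|_{\leb2}$ (via Csisz\'ar--Kullback and Gagliardo--Nirenberg). The paper instead uses no smallness there at all: the same $W^{1,1}\embed L^2$ estimate $\intom(\neps-\ol n_0)^2 \le \lc{sobolev}(\intom n_0)\intom\frac{|\nabla\neps|^2}{\neps}$ controls the buoyancy contribution directly by the $n$-dissipation, with the constant $L$ chosen large enough so the resulting factor is beaten by the $\frac34$ coefficient on $\intom\frac{|\nabla\neps|^2}{\neps}$. This is cleaner, since it avoids having to justify that $\|\neps-\ol n_0\|_{\leb2}$ is actually small (the entropy and Csisz\'ar--Kullback give $L^1$, not $L^2$, smallness, and a Gagliardo--Nirenberg bootstrap toward $L^2$ would need $\|\nabla\neps\|$-type control rather than the $\neps^{-1}|\nabla\neps|^2$-type dissipation available). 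Your route might still be made to work, but it is an unnecessary detour and the weakest link in the proposal. The rest of the argument --- including the $s\ln s \ge s - 1$ lower bound for $\mc F$ and the continuity/integration step --- is correct and matches the paper.
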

\begin{proof}
  We begin by fixing several constants.
  By the Poincar\'e inequality, there is $\newlc{poincare} > 0$ such that
  \begin{align}\label{eq:cond_func:poincare1}
    \intom |\varphi|^2 \le \lc{poincare} \intom |\nabla \varphi|^2
    \qquad \text{for all $\varphi \in C^1(\Ombar; \R^2)$ with $\varphi = 0$ on $\partial \Omega$}
  \end{align}
  as well as
  \begin{align}\label{eq:cond_func:poincare2}
    \intom |\nabla \varphi|^2 \le \lc{poincare} \intom |\Delta \varphi|^2
    \qquad \text{for all $\varphi \in \con2$ with $\partial_\nu \varphi = 0$ on $\partial \Omega$}
  \end{align}
  and since additionally $\sob11 \embed \leb2$, there is $\newlc{sobolev} > 0$ with
  \begin{align}\label{eq:cond_func:sobolev}
        \intom (\varphi - \ol \varphi)^2
    \le \lc{sobolev} \left( \intom |\nabla \varphi| \right)^2
    \le \lc{sobolev} \left( \intom \varphi \right) \intom \frac{|\nabla \varphi|^2}{\varphi}
    \qquad \text{for all $0 < \varphi \in \con1$}.
  \end{align}
  Moreover, by the Gagliardo--Nirenberg and the Poincar\'e inequalities, we can find $\newlc{gni} > 0$ such that
  \begin{align}\label{eq:cond_func:gni}
    \intom |\nabla \varphi|^4 \le \lc{gni} \left( \intom |\Delta \varphi|^2 \right) \left( \intom |\nabla \varphi|^2 \right)
    \qquad \text{for all $\varphi \in \con2$ with $\partial_\nu \varphi = 0$ on $\partial \Omega$}.
  \end{align}
  Next, we set
  \begin{align}\label{eq:cond_func:def_Kg}
    K \defs 16 \lc{poincare} \chi^2 \ol n_0,
    \quad \text{and} \quad
    g(s) \defs K [s^{-1} f(s)]^2 + s^{-1} f(s) + |f'(s)| \text{ for $s \ge 0$},
  \end{align}
  abbreviate
  \begin{align}\label{eq:cond_func:g_max}
    \newlc{g_max} \defs \max_{s \in [0, \|c_0\|_{\leb\infty}]} g(s)
  \end{align}
  (which is finite by \eqref{eq:intro:cond_f})
  and choose $A > 0$ so small that
  \begin{align}\label{eq:cond_func:g_small}
    g(s) \le \frac{1}{16 \lc{poincare} \ol n_0} \qquad \text{for $s \le A$}
  \end{align}
  which is possible due to \eqref{eq:intro:cond_f} and \eqref{eq:ev_smooth:cond_f}.
  Finally, we set
  \begin{align}\label{eq:cond_func:def_LM}
    L \defs 2\lc{poincare} \lc{sobolev} \|\nabla\phi\|_{\leb\infty}^2 \|n_0\|_{\leb1} + 1
    \quad \text{and} \quad
    M \defs 2A^{-2} K \lc{g_max} \ol n_0,
  \end{align}
  abbreviate
  \begin{align}\label{eq:cond_func:def_nabla_w_l4_factor}
    \newlc{nabla_w_l4_factor} \defs K \left( KL+ \frac12 \right), \quad
    \newlc{nabla_w_l4_factor2} \defs 2\lc{sobolev} |\Omega| \ol n_0 (K \lc{g_max} + \chi^2)^2
    \quad \text{and} \quad
    \newlc{f_factor} \defs \frac{2\lc{gni} (\lc{nabla_w_l4_factor}+\lc{nabla_w_l4_factor2})}{K},
  \end{align}
  and fix
  \begin{align}\label{eq:cond_fund:def_eta}
    \eta_0 \defs \frac{K}{8 \lc{f_factor}}
  \end{align}
  as well as $\eta \in (0, \eta_0)$ and $\eps \in (0, 1)$.
  With these preparations at hand, we are now able to estimate the time derivative of each summand appearing in \eqref{eq:def_f}.
  Starting with the contributions from the first and third solution components,
  we integrate by parts and apply Young's inequality as well as \eqref{eq:cond_func:poincare1}, \eqref{eq:cond_func:sobolev} and \eqref{eq:cond_func:def_LM} to obtain
  \begin{align}\label{eq:cond_func:n_part}
          \ddt \intom \neps \ln \left( \frac{\neps}{\ol n_0} \right)
    &=    - \intom (\nabla \neps + \chi \frac{\neps}{1+\eps\neps}  \nabla \we ) \cdot \nabla \ln \neps \notag \\
    &=    - \intom \frac{|\nabla \neps|^2}{\neps}
          - \chi \intom \frac{1}{1+\eps\neps} \nabla \neps \cdot \nabla \we \notag \\
    &\le  - \frac34 \intom \frac{|\nabla \neps|^2}{\neps}
          + \chi^2 \intom \neps |\nabla \we|^2
  \end{align}
  and
  \begin{align}\label{eq:cond_func:u_part}
          \frac12 \ddt \intom |\ue|^2
    &=    - \intom |\nabla \ue|^2
          + \intom (\neps - \ol n_0) (\nabla \phi \cdot \ue) \notag \\
    &\le  - \intom |\nabla \ue|^2
          + \frac{\lc{poincare}\|\nabla \phi\|_{\leb\infty}^2 }{2} \intom |\neps - \ol n_0|^2
          + \frac{1}{2\lc{poincare}} \intom |\ue|^2 \notag \\
    &\le  - \frac12 \intom |\nabla \ue|^2
          + \frac{\lc{poincare} \lc{sobolev} \|\nabla \phi\|_{\leb\infty}^2}{2} \left( \intom n_0 \right) \intom \frac{|\nabla \neps|^2}{\neps} \notag \\
    &\le  - \frac12 \intom |\nabla \ue|^2
          + \frac{L}{4} \intom \frac{|\nabla \neps|^2}{\neps}
  \end{align}
  in $(0, \infty)$.
  Moreover, by testing the second solution component with $\ce$ and the last equation in \eqref{eq:w_eg} with $-\Delta \we$, integrating by parts and applying Young's inequality, we see that
  \begin{align}\label{eq:cond_func:c_part}
          \frac12 \ddt \intom \ce^2 
    &=    - \intom |\nabla \ce|^2 - \intom \neps f(\ce) \ce
    \le   - \intom |\nabla \ce|^2
  \end{align}
  and
  \begin{align}\label{eq:cond_func:w_part}
    &\pe  \frac12 \ddt \intom |\nabla \we|^2 \notag \\
    &=    - \intom |\Delta \we|^2
          - \intom \nabla (\ue \cdot \nabla \we) \cdot \nabla \we
          + \intom |\nabla \we|^2 \Delta \we
          + \intom \nabla (\neps \ce^{-1} f(\ce)) \cdot \nabla \we \notag \\
    &=    - \intom |\Delta \we|^2
          - \intom (\nabla \ue \nabla \we) \cdot \nabla \we
          - \frac12 \intom \ue \cdot \nabla |\nabla \we|^2
          + \intom |\nabla \we|^2 \Delta \we
          + \intom \nabla (\neps \ce^{-1} f(\ce)) \cdot \nabla \we \notag \\
    &\le  - \frac12 \intom |\Delta \we|^2
          + \frac{1}{4KL}\intom |\nabla \ue|^2
          + \left( KL+ \frac12 \right) \intom |\nabla \we|^4
          + \intom \nabla (\neps \ce^{-1} f(\ce)) \cdot \nabla \we
  \end{align}
  in $(0, \infty)$ since $\nabla \cdot \ue = 0$ in $\Omega \times (0, \infty)$.
  Regarding the last term on the right-hand side in \eqref{eq:cond_func:w_part}, we again rely on Young's inequality and recall \eqref{eq:cond_func:def_Kg}, which entails the definition of $g$, in order to estimate
  \begin{align}\label{eq:cond_func:w_part2}
          \intom \nabla (\neps \ce^{-1} f(\ce)) \cdot \nabla \we
    &=    \intom \ce^{-1} f(\ce) \nabla \neps \cdot \nabla \we
          + \intom \neps (-\ce^{-1} f(\ce) + f'(\ce)) \frac{\nabla \ce}{\ce} \cdot \nabla \we \notag \\
    &\le  \frac1{4K} \intom \frac{|\nabla \neps|^2}{\neps}
          + \intom \neps g(\ce) |\nabla \we|^2
    \qquad \text{in $(0, \infty)$}.
  \end{align}
  Thus, combining \eqref{eq:cond_func:n_part}--\eqref{eq:cond_func:w_part2} and recalling that $\lc{nabla_w_l4_factor} = K \left( KL+ \frac12 \right)$ by \eqref{eq:cond_func:def_nabla_w_l4_factor} yields
  \begin{align}\label{eq:cond_func:ddt_f_1}
    &\pe  \ddt \mc F_{K, L, M, \eps}
          + \frac14 \intom \frac{|\nabla \neps|^2}{\neps}
          + \frac{K}{2} \intom |\Delta \we|^2
          + \frac1{4L} \intom |\nabla \ue|^2
          + M \intom |\nabla \ce|^2 \notag \\
    &\le  \lc{nabla_w_l4_factor} \intom |\nabla \we|^4
          + \intom \neps \left(K g(\ce) + \chi^2\right) |\nabla \we|^2
    \qquad \text{in $(0, \infty)$}.
  \end{align}
  As to the last term on the right-hand side in \eqref{eq:cond_func:ddt_f_1},
  we make use of \eqref{eq:cond_func:g_max}, Young's inequality, \eqref{eq:cond_func:def_nabla_w_l4_factor}, \eqref{eq:cond_func:def_Kg}, \eqref{eq:cond_func:g_small}, \eqref{eq:cond_func:sobolev},
  \eqref{eq:cond_func:poincare2} and \eqref{eq:cond_func:def_LM} in estimating
  \begin{align}\label{eq:cond_func:est_last_rhs}
    &\pe  \intom \neps \left(K g(\ce) + \chi^2\right) |\nabla \we|^2 \notag \\
    &\le  (K \lc{g_max} + \chi^2) \intom |\neps - \ol n_0| |\nabla \we|^2
          + \chi^2 \ol n_0 \intom |\nabla \we|^2
          + K \ol n_0 \int_{\{\ce \le A\}} g(\ce) |\nabla \we|^2
          + K \ol n_0 \int_{\{\ce > A\}} g(\ce) |\nabla \we|^2 \notag \\
    &\le  \frac{1}{8\lc{sobolev} |\Omega| \ol n_0} \intom (\neps - \ol n_0)^2
          + \lc{nabla_w_l4_factor2} \intom |\nabla \we|^4
          + \left( \frac{K}{16 \lc{poincare}} + \frac{K}{16 \lc{poincare}} \right) \intom |\nabla \we|^2
          + A^{-2} K \lc{g_max} \ol n_0 \intom |\nabla \ce|^2 \notag \\
    &\le  \frac18 \intom \frac{|\nabla \neps|^2}{\neps}
          + \lc{nabla_w_l4_factor2} \intom |\nabla \we|^4
          + \frac{K}{8} \intom |\Delta \we|^2
          + \frac{M}{2} \intom |\nabla \ce|^2
    \qquad \text{in $(0, \infty)$.}
  \end{align}
  Since moreover
  \begin{align*}
          \intom |\nabla \we|^4
    &\le  \lc{gni} \left( \intom |\Delta \we|^2 \right) \left( \intom |\nabla \we|^2 \right)
     \le  \frac{2\lc{gni}}{K} \left( \intom |\Delta \we|^2 \right) \mc F_{K, L, M, \eps}
    \qquad \text{in $(0, \infty)$ for all $\eps \in (0, 1)$}
  \end{align*}
  by \eqref{eq:cond_func:gni} and \eqref{eq:def_f},
  we conclude from \eqref{eq:cond_func:ddt_f_1} and \eqref{eq:cond_func:est_last_rhs} that
  \begin{align}\label{eq:cond_func:ddt_f_2}
    &\pe  \ddt \mc F_{K, L, M, \eps}
          + \frac18 \intom \frac{|\nabla \neps|^2}{\neps}
          + \frac{K}{4} \intom |\Delta \we|^2
          + \frac1{4L} \intom |\nabla \ue|^2
          + \frac{M}{2} \intom |\nabla \ce|^2 \notag \\
    &\le  \left( \lc{f_factor} \mc F_{K, L, M, \eps} - \frac{K}{8} \right) \intom |\Delta \we|^2
    \qquad \text{in $(0, \infty)$}.
  \end{align}
  We now assume that \eqref{eq:cond_func:cond} holds for some $t_0 > 0$ and claim that then \eqref{eq:cond_func:ddt_f_2} implies \eqref{eq:cond_func:statement1}.
  Indeed, suppose that $S \defs \{\,t \in (t_0, \infty) : \mc F_{K, L, M, \eps} \le \eta \text{ in } (0,t)\,\}$ does not coincide with $(t_0, \infty)$.
  As \eqref{eq:cond_func:cond} and continuity of $\mc F_{K, L, M, \eps}$ imply that $S$ is not empty,
  $t_\star \defs \sup S \in (t_0, \infty)$ is well-defined and satisfies $\mc F_{K, L, M, \eps}(t_\star) = \eta$ and $\mc F_{K, L, M, \eps} < \eta$ in $(0, t_\star)$.
  Thus, \eqref{eq:cond_fund:def_eta} entails that $\lc{f_factor} \mc F_{K, L, M, \eps} - \frac{K}{8} \le 0$ in $(t_0, t_\star)$
  so that integrating \eqref{eq:cond_func:ddt_f_2} and making use of \eqref{eq:cond_func:cond} yields
  \begin{align*}
        \eta
    =   \mc F_{K, L, M, \eps}(t_\star)
    \le \mc F_{K, L, M, \eps}(t_0)
    <   \eta,
  \end{align*}
  a contradiction. Thus \eqref{eq:cond_func:statement1} holds; that is, $\lc{f_factor} \mc F_{K, L, M, \eps} - \frac{K}{8} \le 0$ in $(t_0, \infty)$.
  By again integrating \eqref{eq:cond_func:ddt_f_2} and applying \eqref{eq:cond_func:cond},
  we finally obtain \eqref{eq:cond_func:statement2} for $\gc{ev_space_time_bdd} \defs \eta \max\{8, \frac4K, 4L, \frac 2M\}$.
\end{proof}

\subsection{Eventual smallness of the conditional energy functional}\label{sec:ev_small}
The goal of this subsection is to show that \eqref{eq:cond_func:cond} holds for some $t_0 > 0$, i.e.\ that we may apply Lemma~\ref{lm:cond_func}.
To that end, we first show in the following lemma that the first two summands in \eqref{eq:def_f} become small for a sequence of times going to infinity.
Its proof mainly rests on \eqref{eq:quasi_energy:ddt} which rapidly yields the desired estimate provided $\intom n_0$ is sufficiently small (cf.\ \cite[Lemma~5.2]{LiuLargetimeBehaviorTwodimensional2021}).
However, that argument appears to be insufficient for large initial mass and so we make again use of the key assumption \eqref{eq:ev_smooth:cond_f},
which allows us to favourable split the right-hand side in \eqref{eq:quasi_energy:ddt} into integrals over subdomains where $\ce$ is small and large, respectively.
\begin{lemma}\label{lm:nabla_ln_n_nabla_w_uniform_spacetime}
  Let $\eta > 0$ and $t_1 \ge 0$. Then there exists $\newgc{nabla_ln_n_nablaw_w_uniform_spacetime} > 0$ such that
  \begin{align}\label{eq:nabla_ln_n_nabla_w_uniform_spacetime:statement}
    \frac1{T-t_1} \int_{t_1}^T \intom \frac{|\nabla \neps|^2}{\neps^2} + \frac1{T-t_1} \int_{t_1}^T \intom |\nabla \we|^2 + \frac1{T-t_1} \int_{t_1}^T \intom \neps f(\ce)
    &\le \eta + \frac{\gc{nabla_ln_n_nablaw_w_uniform_spacetime}}{T-t_1}
  \end{align}
  for all $T > t_1$ and all $\eps \in (0, 1)$.
\end{lemma}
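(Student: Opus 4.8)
The plan is to start from the quasi-energy inequality \eqref{eq:quasi_energy:ddt}, which already controls the first two terms on the left-hand side of \eqref{eq:nabla_ln_n_nabla_w_uniform_spacetime:statement} in terms of the time integral of the right-hand side $\chi^2 \intom \neps \ce^{-1} f(\ce)$. Integrating \eqref{eq:quasi_energy:ddt} over $(t_1, T)$ and using that $-\intom \ln \neps \le \intom n_0$ (by \eqref{eq:basic_apriori} and Jensen) together with the uniform bound $\intom \we(\cdot, t_1) \le \gc{quasi_energy}(t_1 + 1)$ from \eqref{eq:quasi_energy:integrated}, I obtain
\begin{align*}
  \int_{t_1}^T \intom \frac{|\nabla \neps|^2}{\neps^2} + \chi^2 \int_{t_1}^T \intom |\nabla \we|^2
  \le c_1 + 2\chi^2 \int_{t_1}^T \intom \neps \ce^{-1} f(\ce)
\end{align*}
with $c_1$ depending only on $t_1$, $n_0$ and $\gc{quasi_energy}$. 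Since $f(\ce) \le \ce^{-1} f(\ce) \|c_0\|_{\leb\infty}$ on the range of $\ce$, the third term on the left of \eqref{eq:nabla_ln_n_nabla_w_uniform_spacetime:statement} is likewise controlled by $\int_{t_1}^T \intom \neps \ce^{-1} f(\ce)$. So everything reduces to showing that, for every $\eta > 0$, we have $\int_{t_1}^T \intom \neps \ce^{-1} f(\ce) \le \tfrac{\eta}{c_2}(T - t_1) + c_3$ for suitable constants, i.e.\ that the time-averaged consumption rate can be made arbitrarily small up to a lower-order correction.

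The key step, and the main obstacle, is this estimate on $\int_{t_1}^T \intom \neps \ce^{-1} f(\ce)$. Following the hint in the text, I would split $\Omega$ into $\{\ce \le A\}$ and $\{\ce > A\}$ for a threshold $A$ to be chosen. On $\{\ce > A\}$ one has $\ce^{-1} f(\ce) \le A^{-1}\|f\|_{L^\infty((0,\|c_0\|_{\leb\infty}))}$, so $\intom \mathds 1_{\{\ce > A\}}\neps \ce^{-1} f(\ce) \le A^{-1}\|f\|_{L^\infty}\intom n_0$, a bound that is uniform in time but not yet small; this is the term that forces the $\frac{\gc{nabla_ln_n_nablaw_w_uniform_spacetime}}{T-t_1}$ correction after one divides by $T - t_1$ — wait, that is not quite right since it is linear in $T - t_1$. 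The actual mechanism must instead be that the term on $\{\ce > A\}$ is absorbed using the dissipation of $\intom \ce^2$ or $\intom \ce$: from testing the second equation in \eqref{prob:approx} appropriately (e.g.\ with $1$, or integrating the $c$-equation), $\int_{t_1}^T \intom \neps f(\ce) \le \intom \ce(\cdot, t_1) - \intom \ce(\cdot, T) + \int_{t_1}^T\intpom \partial_\nu \ce \le \|c_0\|_{\leb\infty}|\Omega|$ — a bound \emph{independent of $T$}. Combined with $\ce^{-1} f(\ce) \le A^{-1} f(\ce)\cdot\ce^{-1}\cdot\ce$... more carefully: on $\{\ce \ge A\}$, $\ce^{-1} f(\ce) = \ce^{-2}\cdot\ce f(\ce) \le A^{-2}\ce f(\ce) \le A^{-2}\|c_0\|_{\leb\infty} f(\ce)$, so $\int_{t_1}^T\intom\mathds 1_{\{\ce\ge A\}}\neps \ce^{-1} f(\ce) \le A^{-2}\|c_0\|_{\leb\infty}\int_{t_1}^T\intom\neps f(\ce) \le A^{-2}\|c_0\|_{\leb\infty}^2|\Omega|$, a constant independent of $T$. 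This becomes the $\gc{nabla_ln_n_nablaw_w_uniform_spacetime}$ contribution after dividing by $T - t_1$. On $\{\ce < A\}$, assumption \eqref{eq:ev_smooth:cond_f} gives $\sup_{0 < s < A}\frac{f(s)}{s}$ arbitrarily small by choosing $A$ small, so $\intom\mathds 1_{\{\ce < A\}}\neps\ce^{-1}f(\ce) \le \big(\sup_{0<s<A}\tfrac{f(s)}{s}\big)\intom n_0$, which can be forced below any prescribed $\tfrac{\eta}{c_2}$.

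Putting this together: given $\eta > 0$, first choose $A$ small enough (depending on $\eta$, $\chi$, $\intom n_0$) that $\sup_{0 < s < A}\tfrac{f(s)}{s} \cdot 2\chi^2\intom n_0 \le \tfrac{\eta}{3}(T-t_1)$ per unit time; then the $\{\ce \ge A\}$ part contributes a $T$-independent constant $c_4(A)$; assembling and dividing by $T - t_1$ yields \eqref{eq:nabla_ln_n_nabla_w_uniform_spacetime:statement} with $\gc{nabla_ln_n_nablaw_w_uniform_spacetime}$ collecting $c_1$ and $c_4(A)$. The only delicate points are: (i) justifying the identity $\int_{t_1}^T\intom \neps f(\ce) = \intom\ce(\cdot,t_1) - \intom\ce(\cdot,T) \le \intom\ce(\cdot,t_1) \le \|c_0\|_{\leb\infty}|\Omega|$ via integrating the second equation in \eqref{prob:approx} and using $\partial_\nu\ce = 0$ and $\nabla\cdot\ue = 0$, and (ii) tracking that all constants depend only on $\eta$, $t_1$, $\chi$, $\|c_0\|_{\leb\infty}$, $\|n_0\|_{\leb1}$ and the fixed data, not on $T$ or $\eps$. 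I expect step (i) to be routine and the choice of $A$ via \eqref{eq:ev_smooth:cond_f} to be exactly where the superlinear-decay hypothesis is used, mirroring its role in Lemma~\ref{lm:nabla_ln_c_eta}.
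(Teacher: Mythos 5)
Your proposal is correct and takes essentially the same route as the paper: integrate the quasi-energy inequality \eqref{eq:quasi_energy:ddt}, split $\intom \neps \ce^{-1} f(\ce)$ over $\{\ce < A\}$ and $\{\ce \ge A\}$, invoke \eqref{eq:ev_smooth:cond_f} to make the small-$\ce$ contribution arbitrarily small per unit time, and control the large-$\ce$ contribution via the $T$-independent bound on $\int_{t_1}^T\intom\neps f(\ce)$ coming from $\ddt\intom\ce = -\intom\neps f(\ce)$. The only organizational difference is that the paper absorbs the $\{\ce\ge A\}$ contribution at the differential level by folding $(\tfrac{\chi^2}{A}+\tfrac12)\intom\ce$ into an augmented functional $y_\eps$, whereas you integrate first and then invoke the $T$-independent bound as a separate estimate — the two are equivalent, though your intermediate $A^{-2}\|c_0\|_{\leb\infty}$ factor on $\{\ce\ge A\}$ could be simplified to $A^{-1}$.
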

\begin{proof}
  We first make use of the key assumption \eqref{eq:ev_smooth:cond_f} to find $A > 0$ such that
  \begin{align*}
     \frac{f(s)}{s} \le \frac{\eta \min\{1, \chi^2\}}{2\chi^2 \intom n_0} \qquad \text{for all $s \in (0, A)$}.
  \end{align*}
  Noting that
  \begin{align*}
    \ddt \intom \ce = - \intom \neps f(\ce)
    \qquad \text{in $(0, \infty)$ for all $\eps \in (0, 1)$},
  \end{align*}
  and recalling \eqref{eq:quasi_energy:ddt}, we find that
  \begin{align*}
    y_\eps(t) \defs - \intom \ln \neps(\cdot, t) + \chi^2 \intom \we(\cdot, t) + \left(\frac{\chi^2}{A} + \frac12 \right) \intom \ce(\cdot, t), \quad t \in (0, \infty), \eps \in (0, 1)
  \end{align*}
  fulfills
  \begin{align*}
          \ddt y_\eps(t)
          + \frac12 \intom \frac{|\nabla \neps|^2}{\neps^2} 
          + \frac{\chi^2}2 \intom |\nabla \we|^2
          + \left(\frac{\chi^2}{A} + \frac12 \right) \intom \neps f(\ce)
    &\le  \chi^2 \intom \neps \ce^{-1} f(\ce)
  \end{align*}
  in $(0, \infty)$ for all $\eps \in (0, 1)$.
  Since
  \begin{align*}
        \intom \neps \ce^{-1} f(\ce)
    &\le \frac{\eta \min\{1, \chi^2\}}{2\chi^2  \intom n_0} \int_{\{\ce < A\}} \neps
        + \frac{1}{A} \int_{\{\ce \ge A\}} \neps f(\ce) \\
    &\le \frac{\eta \min\{1, \chi^2\}}{2\chi^2}
        + \frac{1}{A} \intom \neps f(\ce)
    \qquad \text{in $(0, \infty)$ for all $\eps \in (0, 1)$},
  \end{align*}
  we conclude that
  \begin{align}\label{eq:nabla_ln_n_nabla_w_uniform_spacetime:func_le_eta}
    &\pe  \ddt y_\eps(t)
          + \frac12 \intom \frac{|\nabla \neps|^2}{\neps^2} 
          + \frac{\chi^2}{2} \intom |\nabla \we|^2
          + \frac12 \intom \neps f(\ce)
     \le  \frac{\eta \min\{1, \chi^2\}}{2}
    \qquad \text{in $(0, \infty)$ for all $\eps \in (0, 1)$}.
  \end{align}
  Integrating this inequality first over $(0, t_1)$ and then over $(t_1, T)$ gives $y_\eps(t_1) \le y_\eps(0) + \frac{t_1 \eta \min\{1, \chi^2\}}{2}$
  and
  \begin{align*}
          \int_{t_1}^T \intom \frac{|\nabla \neps|^2}{\neps^2} + \chi^2 \int_{t_1}^T \intom |\nabla \we|^2 + \int_{t_1}^T \intom \neps f(\ce)
    &\le  (T-t_1)\eta\min\{1, \chi^2\} + 2 y_\eps(t_1) \\
    &\le  (T-t_1)\eta\min\{1, \chi^2\} + 2 y_\eps(0) + t_1 \eta\min\{1, \chi^2\}
  \end{align*}
  for all $T > 0$ and $\eps \in (0, 1)$.
  Since $(y_\eps(0))_{\eps \in (0, 1)}$ is bounded due to \eqref{eq:init:def_nc} and \eqref{eq:init_approx:n}, dividing this estimate by $T-t_1$
  yields \eqref{eq:nabla_ln_n_nabla_w_uniform_spacetime:statement} for some $\gc{nabla_ln_n_nablaw_w_uniform_spacetime} > 0$.
\end{proof}

Next, we deal with the third summand in \eqref{eq:def_f}, again by making use of estimates obtained in Section~\ref{sec:global_ex} (namely, of \eqref{eq:fluid_energy:ddt}).
\begin{lemma}\label{lm:u_space_time}
  Let $t_2 > 0$. Then there exists $\newgc{u_space_time} > 0$ such that
  \begin{align*}
    \int_{t_2}^T \intom |\ue|^2 \le \gc{u_space_time} \left(1 + \int_{t_2}^T \intom \frac{|\nabla \neps|^2}{\neps^2} \right)
    \qquad \text{for all $T > 0$ and all $\eps \in (0, 1)$}.
  \end{align*}
\end{lemma}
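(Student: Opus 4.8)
The plan is to derive this directly from the fluid energy inequality \eqref{eq:fluid_energy:ddt} together with a Poincaré estimate. If $u_0 \equiv 0$, then $\ue \equiv 0$ for every $\eps \in (0,1)$ by Lemma~\ref{lm:global_ex_approx}, so both sides are controlled trivially; hence I may assume $u_0 \not\equiv 0$, which by \eqref{eq:intro:n=2_or_no_fluid} forces $N = 2$ (the setting in which \eqref{eq:fluid_energy:ddt} is of interest).

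First I would integrate \eqref{eq:fluid_energy:ddt} over $(0, T)$ and discard the nonnegative term $\intom |\ue(\cdot, T)|^2$ to obtain $\int_0^T \intom |\nabla \ue|^2 \le \intom |\une|^2 + \gc{fluid_n_ln_n} \int_0^T \intom \frac{|\nabla \neps|^2}{\neps^2}$. Since $\ue(\cdot, t) = 0$ on $\partial\Omega$, the Poincaré inequality yields $\newlc{poincare} > 0$ with $\intom |\ue(\cdot, t)|^2 \le \lc{poincare} \intom |\nabla \ue(\cdot, t)|^2$ for every $t > 0$; integrating this over $(0,T)$ and inserting the previous bound gives $\int_0^T \intom |\ue|^2 \le \lc{poincare} \left( \intom |\une|^2 + \gc{fluid_n_ln_n} \int_0^T \intom \frac{|\nabla \neps|^2}{\neps^2} \right)$.

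It then remains to replace the time integral over $(0,T)$ on the right by one over $(t_2, T)$ at the expense of an $\eps$-independent additive constant. To that end I would split $\int_0^T \intom \frac{|\nabla \neps|^2}{\neps^2} = \int_0^{t_2} \intom \frac{|\nabla \neps|^2}{\neps^2} + \int_{t_2}^T \intom \frac{|\nabla \neps|^2}{\neps^2}$, bound the first summand by $\gc{quasi_energy}(t_2 + 1)$ via \eqref{eq:quasi_energy:integrated}, and use $\intom |\une|^2 \le \intom |u_0|^2 + 1$ from \eqref{eq:init:def_u}. Combining this with the trivial estimate $\int_{t_2}^T \intom |\ue|^2 \le \int_0^T \intom |\ue|^2$ (whose left-hand side is nonpositive anyway when $T \le t_2$) yields the assertion with, for instance, $\gc{u_space_time} \defs \lc{poincare} \max\{\intom |u_0|^2 + 1 + \gc{fluid_n_ln_n} \gc{quasi_energy}(t_2+1), \, \gc{fluid_n_ln_n}\}$, which depends on $t_2$ but not on $\eps$ or $T$.

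This is essentially a bookkeeping lemma and I do not anticipate any genuine difficulty; the only point to get right is where to cut the time axis. One must not simply bound $\int_0^T \intom \frac{|\nabla \neps|^2}{\neps^2}$ by $\gc{quasi_energy}(T+1)$, since that would introduce an irreducible $\mc O(T)$ term and destroy the usefulness of the lemma, whose whole purpose is to control $\int_{t_2}^T \intom |\ue|^2$ by a quantity that is later shown to be small on suitable time intervals. Keeping the tail $\int_{t_2}^T \intom \frac{|\nabla \neps|^2}{\neps^2}$ intact and only estimating the finite initial portion $\int_0^{t_2}$ is exactly what \eqref{eq:quasi_energy:integrated} is good for.
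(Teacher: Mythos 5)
Your proof is correct and takes essentially the same route as the paper's: both rest on integrating the fluid energy inequality \eqref{eq:fluid_energy:ddt}, using \eqref{eq:quasi_energy:integrated} to absorb the portion of $\int \intom \frac{|\nabla\neps|^2}{\neps^2}$ coming from $(0,t_2)$ into a $T$-independent constant, invoking \eqref{eq:init:def_u} for $\intom|\une|^2$, and concluding via Poincar\'e. The only cosmetic difference is that the paper integrates \eqref{eq:fluid_energy:ddt} over $(t_2,T)$ and separately over $(0,t_2)$ to control $\intom|\ue(\cdot,t_2)|^2$, whereas you integrate once over $(0,T)$ and split afterwards; the two are equivalent.

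One small inaccuracy in your opening case distinction: Lemma~\ref{lm:global_ex_approx} yields $\ue\equiv 0$ only under the stronger hypothesis $u_0\equiv 0$ \emph{and} $\phi\equiv 0$, not from $u_0\equiv 0$ alone. This is harmless here because the lemma lives in Section~\ref{sec:ev_smooth}, where $N=2$ is assumed throughout, and because \eqref{eq:fluid_energy:ddt} is stated unconditionally by Lemma~\ref{lm:fluid_energy} (the proof of that lemma itself handles the trivial $\ue\equiv 0$ case), so no case distinction is needed at all.
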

\begin{proof}
  Making use of \eqref{eq:fluid_energy:ddt} twice and recalling \eqref{eq:quasi_energy:integrated},
  we obtain (with $\gc{quasi_energy}$ and $\gc{fluid_n_ln_n}$ as in Lemma~\ref{lm:quasi_energy} and Lemma~\ref{lm:fluid_energy}, respectively)
  \begin{align*}
        \int_{t_2}^T \intom |\nabla \ue|^2
    \le \intom |\ue(\cdot, t_2)|^2 + \gc{fluid_n_ln_n} \int_{t_2}^T \intom \frac{|\nabla \neps|^2}{\neps^2}
    \le \intom |\une|^2 + \gc{quasi_energy} \gc{fluid_n_ln_n} (t_2+1) + \gc{fluid_n_ln_n} \int_{t_2}^T \intom \frac{|\nabla \neps|^2}{\neps^2}
  \end{align*}
  for all $T > 0$ and all $\eps \in (0, 1)$,
  upon which the statement follows due to the Poincar\'e inequality and by noting that $(\une)_{\eps \in (0, 1)}$ is bounded in $\leb2$ by \eqref{eq:init:def_u}.
\end{proof}

The last preparatory step before being able to apply Lemma~\ref{lm:cond_func} is to make sure that the last summand in \eqref{eq:def_f} also becomes small,
which due to \eqref{eq:basic_apriori} comes down to showing eventual smallness of $\intom \ce$.
The following proof is quite similar to \cite[Lemma~3.5]{HeihoffTwoNewFunctional2022}
and makes essential use of the Heihoff inequalities \eqref{eq:heihoff_ineq:1} and \eqref{eq:heihoff_ineq:2} derived in \cite{HeihoffTwoNewFunctional2022}.
\begin{lemma}\label{lm:c_l1_small}
  Let $\eta > 0$. Then there is $t_3 > 0$ such that 
  \begin{align*}
    \intom \ce(\cdot, t) \le \eta
    \qquad \text{for all $t \ge t_3$ and all $\eps \in (0, 1)$}.
  \end{align*}
\end{lemma}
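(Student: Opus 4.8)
The plan is to derive a differential inequality for the total oxygen mass $y_\eps(t) \defs \intom \ce(\cdot, t)$ that forces it to decay below any prescribed $\eta$ after some time. The starting point is the mass identity $\ddt \intom \ce = - \intom \neps f(\ce)$, valid in $(0, \infty)$ for all $\eps \in (0, 1)$, which I would combine with a lower bound on the consumption integral $\intom \neps f(\ce)$ in terms of $\intom \ce$ itself. To get such a bound I would split $\Omega$ according to whether $\ce$ exceeds a threshold $A > 0$: on $\{\ce \le A\}$ the contribution of $\ce$ to $y_\eps$ is at most $A |\Omega|$, which can be made $\le \eta/2$ by choosing $A$ small, while on $\{\ce > A\}$ we have $f(\ce) \ge \min_{[A, \lc{c0_linfty}]} f \sfed \kappa > 0$ by \eqref{eq:intro:cond_f}, so $\intom \neps f(\ce) \ge \kappa \int_{\{\ce > A\}} \neps$. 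The difficulty is that without a pointwise lower bound on $\neps$ this does \emph{not} immediately control $\int_{\{\ce > A\}} \ce$, so a crude ODE comparison is not available directly.

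To circumvent this I would instead argue by contradiction using the time-integrated information from Lemma~\ref{lm:nabla_ln_n_nabla_w_uniform_spacetime} (with $\eta$ replaced by a suitably small value and $t_1 = 0$), which yields $\frac1T \int_0^T \intom \neps f(\ce) \le \eta' + \gc{nabla_ln_n_nablaw_w_uniform_spacetime}/T$ for all $T > 0$; combined with the mass identity this gives $\frac1T (\intom \cne - \intom \ce(\cdot, T)) \le \eta' + \gc{nabla_ln_n_nablaw_w_uniform_spacetime}/T$, i.e. $\intom \ce(\cdot, T) \ge \intom \cne - T\eta' - \gc{nabla_ln_n_nablaw_w_uniform_spacetime}$, which is the wrong direction. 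The right way is to exploit smallness of the \emph{spatial} average directly: by the Heihoff inequality \eqref{eq:heihoff_ineq:1} applied with $\varphi = \we$ and $\psi = \ce$, together with the bound on $\intom |\nabla \we|^2$ and on $\intom \ce \ln(\ce/\ol\ce)$ (the latter controllable via \eqref{eq:heihoff_ineq:2} and $\intom |\nabla \ce|^2/\ce^2 = \intom |\nabla \we|^2$), one can estimate $\intom \we(\ce - \ol\ce)$ and hence relate $\intom \we \neps \ce^{-1}f(\ce)$-type quantities to $(\intom \ce) \intom |\nabla \we|^2$.

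Concretely, the cleanest route is to test the $\we$-equation in \eqref{eq:w_eg} with $\ce$ (or equivalently track $\ddt \intom \we \ce$) to produce a term $\intom \we \neps \ce^{-1} f(\ce) = \intom \we \neps f(\ce)$ which, since $\we \ge 0$ and is bounded below away from where $\ce \approx \lc{c0_linfty}$, is comparable to the consumption integral; then Heihoff's inequalities let me absorb the cross terms into a multiple of $(\intom \ce)(\intom |\nabla \we|^2)$. Since $\int_0^\infty \intom |\nabla \we|^2$ is \emph{not} finite a priori, I would instead fix a long but finite averaging window: by Lemma~\ref{lm:nabla_ln_n_nabla_w_uniform_spacetime} there is, for any $\delta > 0$, a time $\tau$ with $\intom |\nabla \we(\cdot, \tau)|^2$ arbitrarily small along a subsequence of times, and one shows $\intom \ce(\cdot, \tau)$ must then also be small; finally the monotonicity $\ddt \intom \ce \le 0$ (immediate from $\ce, f \ge 0$) propagates this smallness to all later times $t \ge t_3$. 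I expect the \textbf{main obstacle} to be making the Heihoff-inequality coupling between $\intom \we \ce^{-1} f(\ce)$ and $\intom \ce$ quantitatively tight enough that the resulting ODE for $\intom \ce$ (or $\intom \we \ce$) genuinely closes without any smallness of $\intom n_0$ — this is precisely where the improved form of \eqref{eq:heihoff_ineq:1} and \eqref{eq:heihoff_ineq:2} (no additive $L^1$ term) is essential, exactly as in \cite[Lemma~3.5]{HeihoffTwoNewFunctional2022}, and the remaining bookkeeping is routine.
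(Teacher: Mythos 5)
Your proposal circles the right toolkit---Heihoff's inequalities, Lemma~\ref{lm:nabla_ln_n_nabla_w_uniform_spacetime}, a threshold decomposition of $\Omega$ by whether $\ce \le A$, and the monotonicity of $t \mapsto \intom \ce$---but it does not actually close. The decisive step you never carry out is to produce a \emph{pointwise-in-time} bound for $\intom f(\ce)$ in terms of the three quantities that Lemma~\ref{lm:nabla_ln_n_nabla_w_uniform_spacetime} controls on time-average (namely $\intom |\nabla \neps|^2/\neps^2$, $\intom |\nabla \we|^2$ \emph{and} $\intom \neps f(\ce)$). The paper's way to do this is to write
\begin{align*}
  \intom f(\ce) \;=\; \frac1{\ol n_0}\intom (\neps - \ol n_0) f(\ce) + \frac1{\ol n_0}\intom \neps f(\ce),
\end{align*}
apply \eqref{eq:heihoff_ineq:1} to the first summand with $\varphi = f(\ce)$ and $\psi = \neps$ (so that the $\intom |\nabla\varphi|^2$ term becomes $\intom |f'(\ce)|^2|\nabla\ce|^2 \lesssim \intom |\nabla\we|^2$), and then absorb the entropy term $\intom \neps \ln(\neps/\ol n_0)$ via \eqref{eq:heihoff_ineq:2}. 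Your suggested applications---Heihoff with $\varphi = \we$, $\psi = \ce$, or testing the $\we$-equation with $\ce$---do not produce the needed control on $\intom f(\ce)$; and the identity you write, $\intom \we \neps \ce^{-1} f(\ce) = \intom \we \neps f(\ce)$, is simply false (it would require $\ce \equiv 1$).

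The gap is then fatal in your final step. You propose to pick a time $\tau$ where $\intom |\nabla\we(\cdot,\tau)|^2$ is small and claim ``one shows $\intom\ce(\cdot,\tau)$ must then also be small.'' This inference does not hold: any constant $\ce$ has $\nabla\we \equiv 0$ yet $\intom\ce$ arbitrary, so smallness of $\intom|\nabla\we|^2$ alone says nothing about $\intom\ce$. What one actually needs at the good time $t_\eps$ is smallness of $\intom f(\ce(\cdot, t_\eps))$, which is furnished precisely by the estimate described above together with the time-averaged smallness of all three RHS terms from Lemma~\ref{lm:nabla_ln_n_nabla_w_uniform_spacetime}. Once $\intom f(\ce(\cdot,t_\eps))$ is small, your threshold decomposition $\intom \ce(\cdot, t_\eps) \le A|\Omega| + (\|c_0\|_{\leb\infty}/\min_{[A,\|c_0\|_{\leb\infty}]} f)\intom f(\ce(\cdot,t_\eps))$ and the monotonicity of $\intom\ce$ finish the proof exactly as you anticipate, so the last two steps of your sketch are correct; the bridge from Lemma~\ref{lm:nabla_ln_n_nabla_w_uniform_spacetime} to smallness of $\intom f(\ce)$ at a single time is what is missing.
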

\begin{proof}
  According to \eqref{eq:heihoff_ineq:1} (with $\eta=2$), \eqref{eq:heihoff_ineq:2} and \eqref{eq:basic_apriori}, and with $\gc{heihoff}$ as in Lemma~\ref{lm:heihoff_ineq},
  \begin{align}\label{eq:c_l1_small:f_ce_est}
          \intom f(\ce)
    &=    \frac1{\ol n_0} \intom (\neps - \ol n_0) f(\ce)
          + \frac1{\ol n_0} \intom \neps f(\ce) \notag \\
    &\le  \frac{|\Omega|}{2 \gc{heihoff}} \intom \frac{|\nabla \neps|^2}{\neps^2}
          + \frac{|\Omega|}{2 \gc{heihoff}} \intom |f'(\ce)|^2 |\nabla \ce|^2
          + \frac1{\ol n_0} \intom \neps f(\ce) \notag \\
    &\le  \lc{factor} \left(
            \intom \frac{|\nabla \neps|^2}{\neps^2}
            + \intom |\nabla \we|^2
            + \intom \neps f(\ce)
          \right)
  \end{align}
  holds in $(0, \infty)$ for all $\eps \in (0, 1)$, 
  where $\newlc{factor} \defs \max\{\frac{|\Omega|}{2\gc{heihoff}}, \frac{|\Omega|}{2\gc{heihoff}}\|f'\|_{L^\infty((0, \|c_0\|_{\leb\infty}))}^2 \|c_0\|_{\leb\infty}^2, \frac1{\ol n_0}, 1\}$.
  Next, we set
  \begin{align*}
    A \defs \frac{\eta}{2|\Omega|}, \quad
    \newlc{inf_f} \defs \min_{s \in [A, \|c_0\|_{\leb\infty}]} f(s) > 0
    \quad \text{and} \quad
    \tilde \eta \defs \frac{\lc{inf_f} \eta}{\lc{factor}4\|c_0\|_{\leb\infty}}
  \end{align*}
  and infer from \eqref{eq:c_l1_small:f_ce_est} and Lemma~\ref{lm:nabla_ln_n_nabla_w_uniform_spacetime} (applied to $\tilde \eta$ and $t_1 \defs 0$) that
  \begin{align*}
    \frac1T \int_0^T \intom f(\ce) \le \tilde \eta + \frac{\gc{nabla_ln_n_nablaw_w_uniform_spacetime}\lc{factor}}{T}
    \qquad \text{for all $T > 0$ and all $\eps \in (0, 1)$},
  \end{align*}
  where $\gc{nabla_ln_n_nablaw_w_uniform_spacetime}$ is given by Lemma~\ref{lm:nabla_ln_n_nabla_w_uniform_spacetime}.
  For $T = t_3 \defs \frac{\gc{nabla_ln_n_nablaw_w_uniform_spacetime}\lc{factor}}{\tilde \eta} > 0$, this in particular entails
  \begin{align*}
    \frac1{t_3} \int_0^{t_3} \intom f(\ce) \le 2\tilde \eta
    \qquad \text{for all $\eps \in (0, 1)$},
  \end{align*}
  which implies that for each $\eps \in (0, 1)$, there is $t_\eps \in (0, t_3)$ with
  \begin{align*}
    \intom f(\ce(\cdot, t_\eps)) \le 2\tilde \eta \le \frac{\lc{inf_f} \eta}{2\|c_0\|_{\leb\infty}}.
  \end{align*}
  As $\intom \ce$ is decreasing in time due to $\ddt \intom \ce = -\intom \neps f(\ce) \le 0$ in $(0, \infty)$ for all $\eps \in (0, 1)$, we thus conclude
  \begin{align*}
         \intom \ce(\cdot, t)
    &\le \intom \ce(\cdot, t_\eps)
    =    \int_{\{\ce < A\}} \ce(\cdot, t_\eps) + \int_{\{\ce \ge A\}} \ce(\cdot, t_\eps)
    \le A |\Omega| + \frac{\|c_0\|_{\leb\infty}}{\lc{inf_f}} \intom f(\ce(\cdot, t_\eps))
    \le  \frac{\eta}{2} + \frac{\eta}{2}
    =    \eta
  \end{align*}
  for all $t \ge t_3 \ge t_\eps$ and all $\eps \in (0, 1)$, as desired.
\end{proof}

The above lemmata now indeed render Lemma~\ref{lm:cond_func} applicable, allowing us to obtain several useful \emph{eventual} a~priori estimates.
\begin{lemma}\label{lm:cond_func_result}
  Let $\eta > 0$. Then there exist $t_4 > 0$ and $\newgc{ev_space_time_bdd2} > 0$ such that
  \begin{align}\label{eq:cond_func_result:statement1}
      %\intom \neps \ln \left( \frac{\neps}{\ol n_0} \right)
    %+ \intom |\nabla \we|^2
    %+ \intom |\ue|^2
    %+ \intom \ce^2
    \mc F_{K, L, M, \eps}
    \le \eta
    \qquad \text{in $(t_4, \infty)$ for all $\eps \in (0, 1)$}
  \end{align}
  and
  \begin{align}\label{eq:cond_func_result:statement2}
      \int_{t_4}^\infty \intom \frac{|\nabla \neps|^2}{\neps}
    + \int_{t_4}^\infty \intom |\Delta \we|^2
    + \int_{t_4}^\infty \intom |\nabla \ue|^2 
    + \int_{t_4}^\infty \intom |\nabla \ce|^2
    \le \gc{ev_space_time_bdd2}
    \qquad \text{for all $\eps \in (0, 1)$}.
  \end{align}
\end{lemma}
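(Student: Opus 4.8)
The plan is to apply the conditional energy estimate of Lemma~\ref{lm:cond_func}: it suffices to exhibit, for each $\eps \in (0, 1)$, a time $t_0 = t_0(\eps)$ contained in a fixed bounded interval (independent of $\eps$) at which the hypothesis \eqref{eq:cond_func:cond} holds, since then \eqref{eq:cond_func:statement1} and \eqref{eq:cond_func:statement2} hold on $(t_0, \infty)$, so that both assertions follow upon choosing $t_4$ to the right of that interval. We may assume $\eta < \eta_0$, where $K, L, M, \eta_0$ and $\gc{ev_space_time_bdd}$ are as provided by Lemma~\ref{lm:cond_func}; if $\eta \ge \eta_0$ we simply run the argument with $\eta_0/2$ in place of $\eta$, which only strengthens both conclusions.

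The four summands of $\mc F_{K, L, M, \eps}$ are made small separately. For the last one, \eqref{eq:basic_apriori} gives $\frac M2 \intom \ce^2 \le \frac M2 \|c_0\|_{\leb\infty} \intom \ce$, so Lemma~\ref{lm:c_l1_small} applied to $\eta_1 \defs \eta (4 M \|c_0\|_{\leb\infty})^{-1}$ — which depends only on $\eta$, $M$, $\|c_0\|_{\leb\infty}$, crucially not on any time produced later — yields $t_3 > 0$ with $\frac M2 \intom \ce^2(\cdot, t) \le \tfrac\eta4$ for all $t \ge t_3$ and all $\eps \in (0, 1)$. For the remaining three summands, Heihoff's inequality \eqref{eq:heihoff_ineq:2} applied to $\psi \defs \neps(\cdot, t)$ (recalling $\intom \neps = |\Omega| \ol n_0$) bounds the entropy term by $\frac{|\Omega| \ol n_0}{\gc{heihoff}} \intom \frac{|\nabla \neps|^2}{\neps^2}$, so the sum of the first three summands of $\mc F_{K, L, M, \eps}(t)$ is at most $c_1 \bigl( \intom \frac{|\nabla \neps|^2}{\neps^2} + \intom |\nabla \we|^2 + \intom |\ue|^2 \bigr)(t)$ for a constant $c_1 = c_1(|\Omega|, \ol n_0, \gc{heihoff}, K, L)$. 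Hence it remains to locate, in a fixed interval, a time at which this bracketed sum is at most $\tfrac{\eta}{4 c_1}$.

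With $t_3$ now fixed, Lemma~\ref{lm:u_space_time} (with $t_2 \defs t_3$) together with Lemma~\ref{lm:nabla_ln_n_nabla_w_uniform_spacetime} (with $t_1 \defs t_3$ and a \emph{second} smallness parameter $\eta_2 \in (0, 1)$) combine, after using nonnegativity of all integrands, to give constants $c_2 > 0$ (depending on $t_3$ but not on $\eps$ or $\eta_2$) and $c_3 > 0$ (depending on $t_3$ and $\eta_2$ but not on $\eps$) such that
\begin{align*}
  \frac{1}{T - t_3} \int_{t_3}^T \left( \intom \frac{|\nabla \neps|^2}{\neps^2} + \intom |\nabla \we|^2 + \intom |\ue|^2 \right)
  \le c_2 \eta_2 + \frac{c_3}{T - t_3}
  \qquad \text{for all $T > t_3$ and $\eps \in (0, 1)$}.
\end{align*}
Choosing $\eta_2 \defs \eta (8 c_1 c_2)^{-1}$ — admissible since $c_2$ is already determined — and then $T_0 \defs t_3 + 8 c_1 c_3 \eta^{-1}$ makes the right-hand side at most $\tfrac{\eta}{4 c_1}$, so by the mean value property of the integral there is, for each $\eps \in (0, 1)$, a time $t_0 = t_0(\eps) \in (t_3, T_0)$ at which the bracketed sum is $\le \tfrac{\eta}{4 c_1}$; since also $t_0 > t_3$, this gives $\mc F_{K, L, M, \eps}(t_0) \le c_1 \cdot \tfrac{\eta}{4 c_1} + \tfrac\eta4 < \eta$. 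Setting $t_4 \defs T_0$, which exceeds $t_0(\eps)$ for every $\eps$, Lemma~\ref{lm:cond_func} now yields \eqref{eq:cond_func_result:statement1} on $(t_4, \infty) \subset (t_0, \infty)$ and, restricting \eqref{eq:cond_func:statement2} to $(t_4, \infty)$, also \eqref{eq:cond_func_result:statement2} with $\gc{ev_space_time_bdd2} \defs \gc{ev_space_time_bdd}$.

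The only subtle point is the order in which the parameters are fixed: because the constant furnished by Lemma~\ref{lm:u_space_time} grows with its parameter $t_2$, a single smallness threshold would produce a circular dependence between that constant and the time $t_3$ delivered by Lemma~\ref{lm:c_l1_small}. Splitting into two thresholds $\eta_1$ (determined first, from $\eta$, $M$ and $\|c_0\|_{\leb\infty}$ alone) and $\eta_2$ (determined only after $t_3$ and $c_1$, $c_2$ are known) breaks this cycle. Apart from this bookkeeping, the argument is a direct assembly of Lemmata~\ref{lm:cond_func}, \ref{lm:nabla_ln_n_nabla_w_uniform_spacetime}, \ref{lm:u_space_time}, \ref{lm:c_l1_small} together with \eqref{eq:heihoff_ineq:2} and \eqref{eq:basic_apriori}, so no additional estimate is needed.
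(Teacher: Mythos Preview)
Your proposal is correct and follows essentially the same approach as the paper: both fix $t_3$ via Lemma~\ref{lm:c_l1_small} to handle the $c$-summand, then combine Lemma~\ref{lm:nabla_ln_n_nabla_w_uniform_spacetime}, Lemma~\ref{lm:u_space_time} and Heihoff's inequality \eqref{eq:heihoff_ineq:2} to force the time average of the remaining three summands below the required threshold, and finally invoke the mean value property to locate an $\eps$-dependent time in a fixed interval at which Lemma~\ref{lm:cond_func} applies. Your explicit remark about the order in which the two smallness thresholds must be chosen mirrors exactly the paper's bookkeeping.
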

\begin{proof}
  We let $K, L, M > 0$, $\eta_0 > 0$ and $\gc{ev_space_time_bdd} > 0$ be as given by Lemma~\ref{lm:cond_func} and assume without loss of generality that $\eta < \eta_0$.
  By Lemma~\ref{lm:c_l1_small} and \eqref{eq:basic_apriori}, there is $t_3 > 0$ such that
  \begin{align}\label{eq:cond_func_result_c_l2_small}
    \frac{M}{2} \intom \ce^2(\cdot, t) \le \frac{\eta}{4}
    \qquad \text{for all $t \ge t_3$}.
  \end{align}
  Next, we set
  \begin{align*}
    \tilde \eta \defs \frac{\eta}{8}\left(\max\left\{\frac{2 \intom n_0}{\gc{heihoff}}, K \gc{u_space_time}, \frac{1}{2L}\right\}\right)^{-1},
  \end{align*}
  where $\gc{heihoff}$ and $\gc{u_space_time}$ are given by Lemma~\ref{lm:heihoff_ineq} and Lemma~\ref{lm:u_space_time} (applied to $t_2 = t_3$),
  and apply Lemma~\ref{lm:nabla_ln_n_nabla_w_uniform_spacetime} to $\tilde \eta$ and $t_1 = t_3$ to obtain $\gc{nabla_ln_n_nablaw_w_uniform_spacetime} > 0$ such that
  \begin{align*}
    \frac1{T-t_3} \int_{t_3}^T \intom \frac{|\nabla \neps|^2}{\neps^2} + \frac1{T-t_3} \int_{t_3}^T \intom |\nabla \we|^2 \le \tilde \eta + \frac{\gc{nabla_ln_n_nablaw_w_uniform_spacetime}}{T-t_3}
    \qquad \text{for all $T > t_3$ and $\eps \in (0, 1)$}.
  \end{align*}
  For $T = t_4 \defs t_3 + \frac{\gc{nabla_ln_n_nablaw_w_uniform_spacetime} + \frac12}{\tilde \eta}$,
  in conjunction with Lemma~\ref{lm:heihoff_ineq} and Lemma~\ref{lm:u_space_time} this further implies
  \begin{align*}
        \frac1{t_4-t_3} \int_{t_3}^{t_4}\left(
          \frac{\gc{heihoff}}{2 \intom n_0} \intom \neps \ln \frac{\neps}{\ol n_0}
          + \frac{1}{2\gc{u_space_time}} \intom |\ue|^2
          + \intom |\nabla \we|^2
        \right)
    \le \tilde \eta + \frac{\gc{nabla_ln_n_nablaw_w_uniform_spacetime} + \frac12}{t_4-t_3}
    =   2\tilde \eta
    \qquad \text{for all $\eps \in (0, 1)$}
  \end{align*}
  and thus
  \begin{align*}
    &\pe \frac1{t_4-t_3} \int_{t_3}^{t_4} \left( \intom \neps \ln \left( \frac{\neps}{\ol n_0} \right) + \frac{K}{2} \intom |\nabla \we|^2 + \frac1{2L} \intom |\ue|^2 \right) \\
    &\le 2 \tilde \eta \max\left\{\frac{2 \intom n_0}{\gc{heihoff}}, \frac{K}{2}, \frac{\gc{u_space_time}}{L}\right\} 
    =    \frac{\eta}{4}
    \qquad \text{for all $\eps \in (0, 1)$}.
  \end{align*}
  In combination with \eqref{eq:cond_func_result_c_l2_small}, this shows that for each $\eps \in (0, 1)$ there exists $t_\eps \in (t_3, t_4)$ with $\mc F_{K, L, M, \eps}(t_\eps) \le \frac{\eta}{2}$.
  Thus, we may apply Lemma~\ref{lm:cond_func} to $t_0 = t_\eps$
  and due to $t_\eps \le t_4$, the estimates \eqref{eq:cond_func:statement1} and \eqref{eq:cond_func:statement2} imply \eqref{eq:cond_func_result:statement1} and \eqref{eq:cond_func_result:statement2}
  for some $\gc{ev_space_time_bdd2} > 0$.
\end{proof}

\subsection{Higher order estimates and large-time behavior}\label{sec:ev_smooth_large_time}
The estimates \eqref{eq:cond_func_result:statement1} and \eqref{eq:cond_func_result:statement2} can be considerably strengthened by a well-established bootstrap procedure,
so that we can improve on the convergence properties asserted by Lemma~\ref{lm:eps_sea_0} and Lemma~\ref{lm:strong_conv}
and conclude that the solution given by Theorem~\ref{th:global_ex} eventually also solves \eqref{prob:fluid_incl} classically.
\begin{lemma}\label{lm:c2_bdd}
  Let $(n, c, u)$ and $(\eps_j)_{j \in \N}$ be given by Lemma~\ref{lm:eps_sea_0}.
  There exist $t_\star > 0$, $\alpha \in (0, 1)$, $\newgc{c2_bdd} > 0$ and a subsequence of $(\eps_j)_{j \in \N}$, which we do not relabel,
  such that $n, c, u \in C^{2+\alpha, \frac{2+\alpha}{2}}(\Ombar \times [t_\star, \infty))$ with
  \begin{align}\label{eq:c2_bdd:bdd}
          \|n\|_{C^{2+\alpha, \frac{2+\alpha}{2}}(\Ombar \times [t, t+1])}
        + \|c\|_{C^{2+\alpha, \frac{2+\alpha}{2}}(\Ombar \times [t, t+1])}
        + \|u\|_{C^{2+\alpha, \frac{2+\alpha}{2}}(\Ombar \times [t, t+1])}
    \le \gc{c2_bdd}
  \end{align}
  for all $t \ge t_\star$
  and that
  \begin{align}\label{eq:c2_bdd:conv}
    (\neps, \ce, \ue) \to (n, c, u)
    \qquad \text{in } \left(C_{\loc}^{2+\alpha, \frac{2+\alpha}{2}}(\Ombar \times [t_\star, \infty))\right)^{1+1+2}
    \text{ as $\eps = \eps_j \sea0$}.
  \end{align}
  Moreover, there is $P \in C^{1, 0}(\Ombar \times [t_\star, \infty))$ such that $(n, c, u, P)$ fulfills the first four lines in \eqref{prob:fluid_incl} pointwise.
\end{lemma}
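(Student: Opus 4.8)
The plan is to use the eventual bounds from Lemma~\ref{lm:cond_func_result} as the anchor of a bootstrap that produces, for every $\eps\in(0,1)$, estimates for $(\neps,\ce,\ue)$ in $C^{2+\alpha,\frac{2+\alpha}{2}}(\Ombar\times[t,t+1])$ which are uniform both in $\eps$ and in $t\ge t_\star$, and then to pass to the limit $\eps=\eps_j\sea0$. First I would fix $\eta>0$ small (at most the $\eta_0$ from Lemma~\ref{lm:cond_func}), invoke Lemma~\ref{lm:cond_func_result} to obtain $t_4>0$ with $\mc F_{K,L,M,\eps}\le\eta$ on $(t_4,\infty)$, hence $\eps$-uniform $L^\infty((t_4,\infty);\cdot)$ bounds---indeed smallness controlled by $\eta$---for $\intom\neps\ln(\neps/\ol n_0)$, $\intom|\nabla\we|^2$, $\intom|\ue|^2$ and $\intom\ce^2$, together with the finiteness of $\int_{t_4}^\infty\intom\frac{|\nabla\neps|^2}{\neps}$, $\int_{t_4}^\infty\intom|\Delta\we|^2$, $\int_{t_4}^\infty\intom|\nabla\ue|^2$ and $\int_{t_4}^\infty\intom|\nabla\ce|^2$. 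Two features of this list do the work: the $L^\infty$-in-time smallness handles the critical two-dimensional scaling in the arguments below, and the finiteness of the space--time integrals---hence smallness of their tails beyond a large time---is what turns the Gr\"onwall estimates into \emph{time-uniform}, rather than merely locally-in-time, bounds.

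For the bootstrap itself I would work throughout with the $\we$-formulation \eqref{eq:n_eq_w}, \eqref{eq:w_eg}, so that the singular factor $\ce^{-1}$ never enters the first equation---only $\nabla\we$ does---and so that the source $\neps\ce^{-1}f(\ce)$ in \eqref{eq:w_eg} is, by \eqref{eq:intro:cond_f} and the mean value theorem, bounded by $\|f'\|_{L^\infty((0,\|c_0\|_{\leb\infty}))}\neps$ without any singularity. From the $L^\infty_t\leb2$ bound on $\nabla\we$ and the space--time $L^2$ bound on $\Delta\we$, a Gagliardo--Nirenberg interpolation yields $\nabla\we\in L^4((t,t+1);\leb4)$ uniformly. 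Using this as the drift in \eqref{eq:n_eq_w}, a standard testing procedure with $\neps^{p-1}$, two-dimensional Sobolev embeddings and an absorption argument upgrade $\neps$ to a uniform bound in $L^\infty((t,t+1);\leb p)$ for every $p<\infty$ and every $t\ge t_4+1$; the decisive point is that the coefficient multiplying $\intom\neps^p$ in the resulting differential inequality is integrable over $(t_4,\infty)$ by the space--time bounds, so the estimate does not degrade as $t\to\infty$, while parabolic smoothing lets one start from the mere $L\log L$ information at time $t-1$. Feeding this back, the source $\neps f(\ce)$ in the second line of \eqref{prob:approx} and the source $\neps\ce^{-1}f(\ce)$ in \eqref{eq:w_eg} lie in $L^\infty((t,t+1);\leb p)$ for all $p$, the remaining terms on the right of \eqref{eq:w_eg} are already controlled, and $\ue$ solves a Navier--Stokes equation forced by $\neps\nabla\phi\in L^\infty((t,t+1);\leb p)$; successive applications of parabolic $L^p$--$L^q$ smoothing, the known regularity theory for the Stokes evolution, and finally a finite parabolic Schauder bootstrap then give the $\eps$-uniform $C^{2+\alpha,\frac{2+\alpha}{2}}$ bounds on $[t,t+1]$, uniformly in $t\ge t_\star:=t_4+2$, i.e.\ \eqref{eq:c2_bdd:bdd} for $(\neps,\ce,\ue)$.

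Having these $\eps$-independent bounds, Arzel\`a--Ascoli and a diagonal extraction furnish a subsequence of $(\eps_j)_{j\in\N}$ along which $(\neps,\ce,\ue)$ converges in $C^{2+\alpha',\frac{2+\alpha'}{2}}_{\loc}(\Ombar\times[t_\star,\infty))$ for every $\alpha'<\alpha$; since $C^0_{\loc}$-convergence forces the limit to agree with the a.e.\ limits from Lemma~\ref{lm:eps_sea_0}, that limit is $(n,c,u)$, which therefore belongs to $C^{2+\alpha',\frac{2+\alpha'}{2}}(\Ombar\times[t_\star,\infty))$ and satisfies \eqref{eq:c2_bdd:bdd} and \eqref{eq:c2_bdd:conv} after relabeling $\alpha'$ as $\alpha$. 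Passing to the limit in \eqref{prob:approx} (noting $\frac{1}{1+\eps\neps}\to1$ locally uniformly) shows that $(n,c,u)$ satisfies the first two equations and $\nabla\cdot u=0$ pointwise on $\Ombar\times[t_\star,\infty)$; and since $\uet+(\ue\cdot\nabla)\ue-\Delta\ue-\neps\nabla\phi=-\nabla P_\eps$ converges in $C^0_{\loc}$, subtracting suitable spatial means and using that a gradient is determined by the Helmholtz decomposition of a continuous field, the $P_\eps$ converge locally uniformly to some $P\in C^{1,0}(\Ombar\times[t_\star,\infty))$ with $(n,c,u,P)$ solving the Navier--Stokes equation pointwise as well.

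The hard part is the first genuine gain in the bootstrap together with its time-uniformity. Three difficulties are entangled: the singularity of the taxis term, circumvented by never leaving the $\we$-picture, so that $\ce^{-1}$ and $\we$ itself are avoided and only $\nabla\we$ is used; the fact that in two dimensions $\nabla\we\in L^\infty_t\leb2$ and $\ue\in L^\infty_t\leb2\cap L^2_tW^{1,2}$ are of \emph{critical} scaling for the corresponding drift and forcing, so that the initial improvement leans on the smallness of these norms supplied by the conditional functional, in the spirit of a small-data argument; and the requirement that each Gr\"onwall step deliver a bound uniform in $t$ rather than one growing with $t$, which is exactly what the finiteness---hence small tails---of the space--time integrals in \eqref{eq:cond_func_result:statement2} secures. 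Note that no uniform-in-time lower bound for $\ce$ is available (in fact $c$ will tend to $0$), so one must resist converting $\we$-bounds back into $\ce$-bounds by division; all estimates touching the chemotactic coupling stay phrased through $\we$.
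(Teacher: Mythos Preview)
Your proposal is correct and follows essentially the same route as the paper: both start from the eventual bounds of Lemma~\ref{lm:cond_func_result}, run a bootstrap (in the $\we$-formulation) through successively stronger $\eps$-uniform estimates up to parabolic Schauder regularity, and conclude via Arzel\`a--Ascoli and a diagonal argument. The paper orders the bootstrap slightly differently---first obtaining $L^\infty_tL^2$ for $\neps$, then an $L^\infty_tL^4$ (rather than your $L^4_tL^4$) bound for $\nabla\we$ by a testing argument, before reaching $L^\infty_tL^\infty$ for $\neps$---but this is a matter of taste and both orderings work in two dimensions.
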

\begin{proof}
  Starting from the bounds given by Lemma~\ref{lm:cond_func_result}, a bootstrap procedure eventually gives sufficiently strong $\eps$-uniform bounds.
  As this strategy has already been executed successfully in closely related settings, we choose to only briefly sketch the main steps.
  In the following list, all estimates are $\eps$-independent
  and ``eventual bound'' means a bound holding starting from some minimum time which in turn may increase from estimate to estimate.
  (This is unproblematic as there are only finitely many steps.)
  One can obtain
  \begin{enumerate}
    \item an eventual $L^\infty$-$L^2$ bound for $\neps$ by testing, cf.\ \cite[Lemma~3.3]{WinklerSmallmassSolutionsTwodimensional2020},
    \item an eventual $L^\infty$-$L^2$ bound for $\nabla \ue$ by testing, cf.\ \cite[Lemma~3.4]{WinklerSmallmassSolutionsTwodimensional2020},
    \item an eventual $L^\infty$-$L^2$ bound for $A^\beta \ue$, $\beta \in (\frac12, 1)$, by semigroup methods, cf.\ \cite[Lemma~3.5]{WinklerSmallmassSolutionsTwodimensional2020},
    \item an eventual $L^\infty$-$L^4$ bound for $\nabla \we$ by testing, cf.\ \cite[Lemma~5.5]{LiuLargetimeBehaviorTwodimensional2021},
    \item an eventual $L^\infty$-$L^\infty$ bound for $\neps$ by semigroup methods, cf.\ \cite[Lemma~4.4]{BlackEventualSmoothnessGeneralized2018},
    \item an eventual $C^{1+\alpha_1, \frac{1+\alpha_1}{2}}$ bound for $\ue$ (for some $\alpha_1 \in (0, 1)$) by semigroup methods, cf.\ \cite[Lemma~3.16 and Corollary~3.17]{HeihoffTwoNewFunctional2022}, 
    \item an eventual $C^{\alpha_2, \frac{\alpha_2}{2}}$ bound for $\neps$ (for some $\alpha_2 \in (0, 1)$) by applying results on Hölder estimates for scalar parabolic equations, cf.\ \cite[Lemma~3.18]{HeihoffTwoNewFunctional2022} and
    \item finally eventual  $C^{2+\alpha_3, 1+\frac{\alpha_3}{2}}$ bounds for all solution components (for some $\alpha_3 \in (0, 1)$) by Schauder estimates, cf.\ \cite[Lemma~4.7]{BlackEventualSmoothnessGeneralized2018}.
  \end{enumerate}
  Fixing $\alpha \in (0, \alpha_3)$ and setting $t_\star$ as the time for which the last point in this list holds,
  we may apply the Arzel\`a--Ascoli theorem to obtain \eqref{eq:c2_bdd:conv} and then also the bounds in \eqref{eq:c2_bdd:bdd}.
  Well-known arguments then show that $(n, c, u)$ together with some $P$ forms a classical solution of \eqref{prob:fluid_incl} in $\Ombar \times [t_\star, \infty)$, cf.\ Remark~\ref{rm:sol_concept}(i).
\end{proof}

Next, we show that the eventual smallness properties asserted in \eqref{eq:cond_func_result:statement1} %and Lemma~\ref{lm:c_l1_small}
together with convergence in $\leb1$ as $\eps = \eps_j \sea 0$ at each (large) time point entailed in \eqref{eq:c2_bdd:conv}
imply convergence in $\leb1$ for $t \to \infty$ of the solution constructed in Theorem~\ref{th:global_ex}.
\begin{lemma}\label{lm:large_time_l1}
  The triple $(n, c, u)$ given by Theorem~\ref{th:global_ex} fulfills
  \begin{align}\label{eq:large_time_l1:statement}
    \lim_{t \to \infty} \left( \|n(\cdot, t) - \ol n_0\|_{\leb1} + \|c(\cdot, t)\|_{\leb1} + \|u(\cdot, t)\|_{\leb1} \right) = 0.
  \end{align}
\end{lemma}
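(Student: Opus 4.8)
The plan is to transport the eventual smallness of the conditional energy functional $\mc F_{K,L,M,\eps}$ from the approximate solutions to the limit $(n,c,u)$ and then to read off the three $L^1$ convergences --- the one for $n$ via the Csisz\'ar--Kullback inequality. First I would fix $\eta > 0$ and invoke Lemma~\ref{lm:cond_func_result} to obtain $t_4 > 0$ with $\mc F_{K,L,M,\eps}(t) \le \eta$ for all $t \ge t_4$ and all $\eps \in (0,1)$. Since the last three summands in \eqref{eq:def_f} are nonnegative, this forces $\intom \neps \ln(\neps/\ol n_0) \le \eta$, $\intom |\ue|^2 \le 2L\eta$ and $\intom \ce^2 \le 2\eta/M$ throughout $(t_4,\infty)$, uniformly in $\eps$.

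Next I would pass to the limit $\eps = \eps_j \sea 0$ in these inequalities by means of Lemma~\ref{lm:c2_bdd}: since $(\neps, \ce, \ue) \to (n,c,u)$ in $C_{\loc}^{2+\alpha, \frac{2+\alpha}{2}}(\Ombar \times [t_\star, \infty))$, for each fixed $t \ge \max\{t_\star, t_4\}$ the convergences $\neps(\cdot,t) \to n(\cdot,t)$, $\ce(\cdot,t) \to c(\cdot,t)$ and $\ue(\cdot,t) \to u(\cdot,t)$ hold uniformly on $\Ombar$, with $\neps(\cdot,t)$ uniformly bounded for small $\eps$; as $s \mapsto s\ln(s/\ol n_0)$ is continuous on $[0,\infty)$ (with value $0$ at $s=0$), this yields $\intom n(\cdot,t)\ln(n(\cdot,t)/\ol n_0) \le \eta$, $\intom |u(\cdot,t)|^2 \le 2L\eta$ and $\intom c^2(\cdot,t) \le 2\eta/M$ for all $t \ge \max\{t_\star, t_4\}$, and moreover $\intom n(\cdot,t) = \intom n_0 = |\Omega|\,\ol n_0$, i.e.\ mass conservation of the first component from time $t_\star$ on. From here the estimate is immediate: by the Cauchy--Schwarz inequality, $\|c(\cdot,t)\|_{\leb1} \le (2|\Omega|\eta/M)^{1/2}$ and $\|u(\cdot,t)\|_{\leb1} \le (2|\Omega|L\eta)^{1/2}$, while applying Lemma~\ref{lm:csiszar-kullback} to $\varphi = n(\cdot,t)$ --- whose mean is $\ol n_0$ by mass conservation --- gives
\[
  \|n(\cdot,t) - \ol n_0\|_{\leb1}^2 \le 2 \Bigl( \intom n_0 \Bigr) \intom n(\cdot,t) \ln\frac{n(\cdot,t)}{\ol n_0} \le 2 |\Omega|\, \ol n_0\, \eta
\]
for all $t \ge \max\{t_\star, t_4\}$. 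Summing the three bounds shows that $\limsup_{t \to \infty}\bigl( \|n(\cdot,t) - \ol n_0\|_{\leb1} + \|c(\cdot,t)\|_{\leb1} + \|u(\cdot,t)\|_{\leb1} \bigr)$ is bounded by a fixed multiple of $\sqrt\eta$; since $\eta > 0$ was arbitrary and the quantity in parentheses is nonnegative, \eqref{eq:large_time_l1:statement} follows.

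I do not expect a genuine obstacle here. The only point that needs a little care is the limit passage $\eps \sea 0$ inside the functional, which is exactly why the local $C_{\loc}^{2+\alpha, \frac{2+\alpha}{2}}$-convergence of Lemma~\ref{lm:c2_bdd} is used rather than merely the $L^2_{\loc}$-convergence of Lemma~\ref{lm:eps_sea_0}: it both justifies taking the limit in $\intom \neps \ln(\neps/\ol n_0)$ (keeping the integrand in a range where $s \mapsto s\ln(s/\ol n_0)$ is uniformly continuous) and supplies the identity $\ol{n(\cdot,t)} = \ol n_0$ required before Csisz\'ar--Kullback can be applied.
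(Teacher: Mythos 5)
Your argument is correct and is essentially the paper's own: both proofs combine the eventual smallness of $\mc F_{K,L,M,\eps}$ from Lemma~\ref{lm:cond_func_result} with the Csisz\'ar--Kullback and Cauchy--Schwarz inequalities and the strong convergence supplied by Lemma~\ref{lm:c2_bdd}. The only cosmetic difference is the order of operations: the paper applies Csisz\'ar--Kullback and Cauchy--Schwarz to $(\neps,\ce,\ue)$ and then uses pointwise-in-time $L^1$-closeness of $(n_{\eps(t)},c_{\eps(t)},u_{\eps(t)})(\cdot,t)$ to $(n,c,u)(\cdot,t)$ together with the triangle inequality (thereby avoiding the need to verify $\ol{n(\cdot,t)} = \ol n_0$ in the limit), whereas you pass the functional bound to the limit first and then apply those inequalities to $(n,c,u)$ directly, which is why you need---and correctly supply---mass conservation of $n$ for $t \ge t_\star$.
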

\begin{proof}
  Let $\eta > 0$. By virtue of \eqref{eq:cond_func_result:statement1}, Lemma~\ref{lm:csiszar-kullback} and Hölder's inequality,
  there is $t_4 > 0$ such that
  \begin{align*}
    \|\neps(\cdot, t) - \ol n_0\|_{\leb1} + \|\ce(\cdot, t)\|_{\leb1} + \|\ue(\cdot, t)\|_{\leb1} < \frac{\eta}{2}
    \qquad \text{for all $t \ge t_4$ and all $\eps \in (0, 1)$}.
  \end{align*}
  Moreover, Lemma~\ref{lm:c2_bdd} asserts that (with $t_\star$ as given by that lemma) for each $t \ge t_\star$ there exists $\eps(t) \in (0, 1)$ such that
  \begin{align*}
    \|n_{\eps(t)}(\cdot, t) - n(\cdot, t) \|_{\leb1} + \|c_{\eps(t)}(\cdot, t) - c(\cdot, t)\|_{\leb1} + \|u_{\eps(t)}(\cdot, t) - u(\cdot, t)\|_{\leb1} < \frac{\eta}{2}.
  \end{align*}
  Combining these estimates gives
  \begin{align*}
    \|n(\cdot, t) - \ol n_0\|_{\leb1} + \|c(\cdot, t)\|_{\leb1} + \|u(\cdot, t)\|_{\leb1} < \eta
    \qquad \text{for all $t > \max\{\hat t, t_\star\}$}
  \end{align*}
  and thus \eqref{eq:large_time_l1:statement}.
\end{proof}

By a typical compactness argument, we can improve \eqref{eq:large_time_l1:statement} to convergence in $\con2$.
\begin{lemma}\label{lm:large_time_c2}
  The solution $(n, c, u)$ of \eqref{prob:fluid_incl} given by Theorem~\ref{th:global_ex} fulfills \eqref{eq:ev_smooth:large_time}.
\end{lemma}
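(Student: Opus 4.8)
The plan is to upgrade the $\leb1$ convergence established in Lemma~\ref{lm:large_time_l1} to convergence in $\con2$ by exploiting the eventual uniform higher‑order regularity provided by Lemma~\ref{lm:c2_bdd}, via a standard compactness and subsequence argument. First I would fix $t_\star > 0$ and $\alpha \in (0, 1)$ as in Lemma~\ref{lm:c2_bdd}; then \eqref{eq:c2_bdd:bdd} shows in particular that the spatial slices $n(\cdot, t)$, $c(\cdot, t)$ and $u(\cdot, t)$ are bounded in $C^{2+\alpha}(\Ombar)$ by a constant independent of $t$, for all $t \ge t_\star$.

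Next I would argue by subsequences. Let $(t_k)_{k \in \N} \subset [t_\star, \infty)$ be an arbitrary sequence with $t_k \to \infty$. By the just‑noted uniform $C^{2+\alpha}(\Ombar)$ bound and the compactness of the embedding $C^{2+\alpha}(\Ombar) \embed \con2$, the Arzel\`a--Ascoli theorem yields a subsequence along which $n(\cdot, t_k)$, $c(\cdot, t_k)$ and $u(\cdot, t_k)$ converge in $\con2$ to certain limit functions. Since convergence in $\con2$ entails convergence in $\leb1$, and since Lemma~\ref{lm:large_time_l1} forces $n(\cdot, t) \to \ol n_0$, $c(\cdot, t) \to 0$ and $u(\cdot, t) \to 0$ in $\leb1$ as $t \to \infty$, uniqueness of limits identifies the $\con2$‑limits as $\ol n_0$, $0$ and $0$, respectively.

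Because the limit along every such subsequence is the same, a routine argument completes the proof: if $\|n(\cdot, t) - \ol n_0\|_{\con2} + \|c(\cdot, t)\|_{\con2} + \|u(\cdot, t)\|_{\con2}$ did not tend to $0$ as $t \to \infty$, one could pick $t_k \to \infty$ along which this quantity stays bounded away from $0$, contradicting the previous step. Hence \eqref{eq:ev_smooth:large_time} holds. I do not anticipate any genuine obstacle here; the only point requiring a little care is that \eqref{eq:c2_bdd:bdd} is stated on the unit time slabs $\Ombar \times [t, t+1]$ rather than on individual time slices, but restricting to $\{t\} \times \Ombar$ evidently preserves the $C^{2+\alpha}$ bound, and the remainder is textbook Arzel\`a--Ascoli.
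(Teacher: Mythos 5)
Your proposal is correct and follows essentially the same route as the paper: both arguments invoke the uniform $C^{2+\alpha}$ bound from Lemma~\ref{lm:c2_bdd}, the compact embedding $C^{2+\alpha}(\Ombar)\embed\con2$, and the $\leb1$ limits from Lemma~\ref{lm:large_time_l1} to identify the accumulation points. The paper phrases the final step as a direct contradiction (picking a sequence along which the $\con2$-distance stays $\ge\lc{nonconv}$), while you use the equivalent ``every subsequence has a further subsequence converging to the same limit'' formulation; this is only a cosmetic difference.
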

\begin{proof}
  Suppose that \eqref{eq:ev_smooth:large_time} is false, i.e.\ that there are $\newlc{nonconv} > 0$ and a sequence $(t_j)_{j \in \N} \subset (0, \infty)$ with $t_j \to \infty$ for $j \to \infty$ such that
  \begin{align}\label{eq:large_time_c2:contr}
    \|n(\cdot, t_j) - \ol n_0\|_{\con2} + \|c(\cdot, t_j)\|_{\con2} + \|u(\cdot, t_j)\|_{\con2} \ge \lc{nonconv}
    \qquad \text{for all $j \in \N$}.
  \end{align}
  Thanks to \eqref{eq:c2_bdd:bdd} and as $\con{2+\alpha}$ embeds compactly into $\con2$ for any $\alpha \in (0, 1)$,
  we can find $n_\infty \in \con2$, $c_\infty \in \con2$, $u_\infty \in \con2$ and a subsequence $(t_{j_k})_{k \in \N}$ of $(t_j)_{j \in \N}$ such that
  \begin{align}\label{eq:large_time_c2:conv}
    n(\cdot, t_{j_k}) \ra n_\infty \text{ in $\con2$}, \quad
    c(\cdot, t_{j_k}) \ra c_\infty \text{ in $\con2$} \quad \text{and} \quad
    u(\cdot, t_{j_k}) \ra u_\infty \text{ in $\con2$}
  \end{align}
  as $k \to \infty$. Since $\con2 \embed \leb1$, Lemma~\ref{lm:large_time_l1} asserts $(n_\infty, c_\infty, u_\infty) = (\ol n_0, 0, 0)$,
  meaning that \eqref{eq:large_time_c2:conv} contradicts \eqref{eq:large_time_c2:contr}.
\end{proof}

\subsection{Proofs of Theorem~\ref{th:ev_smooth} and Theorem~\ref{th:global_ex_classical}}\label{sec:proof_ev_smooth}
At last, we prove Theorem~\ref{th:ev_smooth} and Theorem~\ref{th:global_ex_classical}.
All assertions of the former have already been proven above.
\begin{proof}[Proof of Theorem~\ref{th:ev_smooth}]
  That the solution $(n, c, u)$ constructed in Theorem~\ref{th:global_ex} becomes eventually smooth in the sense of Theorem~\ref{th:ev_smooth} has been shown in Lemma~\ref{lm:c2_bdd} 
  and that \eqref{eq:ev_smooth:large_time} holds has been asserted in Lemma~\ref{lm:large_time_c2}.
\end{proof}

Moreover, mainly as a byproduct of Lemma~\ref{lm:cond_func} we also obtain Theorem~\ref{th:global_ex_classical}.
\begin{proof}[Proof of Theorem~\ref{th:global_ex_classical}]
  The existence of local classical solutions to \eqref{prob:fluid_incl} can be seen as in Lemma~\ref{lm:global_ex_approx}.
  Noting that the constants $K, L, M$ and $\eta_0$ given by Lemma~\ref{lm:cond_func} only depend on the initial data due to dependence on $\intom n_0$ and $\|c_0\|_{\leb\infty}$
  (cf.\ \eqref{eq:cond_func:def_Kg}, \eqref{eq:cond_func:def_LM} and \eqref{eq:cond_fund:def_eta}),
  we may choose $\eta$ (depending on $m_0$) sufficiently small so that a version of Lemma~\ref{lm:cond_func} for $\eps=0$ becomes applicable 
  and yields boundedness of the functional defined in \eqref{eq:sol_concept:n_ln_supersol} for all $t \ge 0$.
  A bootstrap argument as in Lemma~\ref{lm:c2_bdd} then shows that maximal classical solutions with initial data fulfilling the conditions of Theorem~\ref{th:global_ex_classical} are locally bounded in $(\con2)^{1+1+2}$
  and hence exist globally in time.
\end{proof}

\section*{Acknowledgments}\pdfbookmark{Acknowledgments}{sec:ack}
The author would like to thank the Max Planck Institute for Mathematics in the Sciences
for providing access to the article \cite{CsiszarInformationtypeMeasuresDifference1967}.

\pdfbookmark{References}{sec:ref}
%\bibliography{../../../bibliography_bibtex}

\footnotesize

\end{document}